\author{
Sally Dong \\ University of Washington, Seattle \\\texttt{sallyqd@uw.edu}\\
\and
Haotian Jiang  \\ University of Washington, Seattle \\ \texttt{jhtdavid@uw.edu}\\
\and
Yin Tat Lee\thanks{Supported by NSF awards CCF-1749609, DMS-1839116, DMS-2023166, CCF-2105772, a Microsoft Research
Faculty Fellowship, a Sloan Research Fellowship, and a Packard Fellowship} \\ University of Washington, Seattle \\ \texttt{yintat@uw.edu}\\
\and 
Swati Padmanabhan \\ University of Washington, Seattle \\ \texttt{pswati@uw.edu}\\
\and 
Guanghao Ye \\  Massachusetts Institute of Technology\\ \texttt{ghye@mit.edu}
}
\begin{document}

\title{Decomposable Non-Smooth Convex Optimization with Nearly-Linear Gradient Oracle Complexity\blfootnotea{Author names are listed in alphabetical order.}}
\maketitle

\begin{abstract}
Many fundamental problems in machine learning can be formulated by the convex program 
\[
\min_{\vtheta\in \R^d}\ \sum_{i=1}^{n}f_{i}(\vtheta),
\]
where each $f_i$ is a convex, Lipschitz function supported on a subset of $d_i$ coordinates of $\vtheta$.
One common approach to this problem, exemplified by stochastic gradient descent, involves sampling one $f_i$ term at every iteration to make progress.
This approach crucially relies on a notion of uniformity across the $f_i$'s, formally captured by their condition number. In this work, we give an algorithm that minimizes the above convex formulation to $\epsilon$-accuracy in $\widetilde{O}(\sum_{i=1}^n d_i \log (1 /\epsilon))$ gradient computations, with no assumptions on the condition number. 
The previous best algorithm independent of the condition number is the standard cutting plane method, which requires $O(nd \log (1/\epsilon))$ gradient computations. As a corollary, we improve upon the evaluation oracle complexity for decomposable submodular minimization by \cite{axiotis2021decomposable}. 
Our main technical contribution is an adaptive procedure to select an $f_i$ term at every iteration via a novel combination of cutting-plane and interior-point methods.

\end{abstract}

\newpage

\section{Introduction\label{sec:Introduction}}

Many fundamental problems in machine learning are abstractly captured by the convex optimization formulation
\[
\begin{array}{ll}
\mbox{minimize}_{\vtheta\in \R^d} & \sum_{i=1}^{n}f_{i}(\vtheta), 
\end{array}\numberthis\label[none]{eq:generalObj}
\]
where each $f_i$ is a
convex, Lipschitz function.
For example, in empirical risk minimization, each $f_{i}$ measures the loss incurred by the $i$-th data point from the training set. In generalized linear models, each $f_i$ represents a link function applied to a linear predictor evaluated at the $i$-th data point.

The ubiquity of \cref{eq:generalObj} in the setting with smooth $f_i$'s has spurred the development of well-known variants of stochastic gradient methods \cite{robbins1951stochastic,bottou2003large,zhang2004solving, bottou2012stochastic} such as \cite{roux2012stochastic,shalev2013stochastic,johnson2013accelerating,mahdavi2013mixed, defazio2014saga, mairal2015incremental, allen2016improved, hazan2016variance,schmidt2017minimizing}; almost universally, these algorithms leverage the ``sum structure'' of \cref{eq:generalObj} by sampling, in each iteration, one $f_i$ with which to make progress.
These theoretical developments have in turn powered tremendous empirical success in machine learning through widely used software packages such as \texttt{libSVM} \cite{CC01a}.

In many practical applications, \cref{eq:generalObj}  appears with non-smooth $f_i$'s, as well as the additional structure that each $f_i$ depends only on a subset of the problem parameters $\vtheta$. 
One notable example is decomposable submodular function minimization\footnote{In decomposable submodular minimization, each $f_i$ corresponds to the Lov\'asz extension of the individual submodular function and is therefore generally non-smooth.} (SFM), which has proven to be expressive in diverse contexts such as 
determinantal point processes \cite{kulesza2010structured}, 
MAP inference in computer vision \cite{kohli2009robust,vicente2009joint,fix2013structured},
hypergraph cuts \cite{veldt2020minimizing}, and
covering functions \cite{stobbe2010efficient}. 
Another application is found in generalized linear models when the data is high dimensional and sparse. 
In this setting, $f_i$ depends on a restricted subset of the parameters $\vtheta$ that correspond to the features of the data point with non-zero value.
Last but not least, the case with each $f_i$ depending on a small subset of the parameters is also called sparse separable optimization and has applications in sparse SVM and matrix completion \cite{recht2011hogwild}. 

In this work, we initiate a systematic study of algorithms for \cref{eq:generalObj} without the smoothness assumption\footnote{
A function $f$ is said to be \emph{$\beta$-smooth} if $f(y) \leq f(x) + \langle \nabla f(x),  y - x \rangle + \beta/2 \norm{y - x}_2^2$ for all $x,y$ and \emph{$\alpha$-strongly-convex} if $f(y) \geq f(x) + \langle \nabla f(x),  y - x \rangle + \alpha/2 \norm{y - x}_2^2$ for all $x,y$. The condition number of $f$ is defined to be $\kappa = \beta/\alpha$.}. 
Motivated by the aforementioned applications, we introduce the additional structure that each $f_i$ depends on a subset of the coordinates of $\vtheta$. As is standard in the black-box model for studying first-order convex optimization methods, we allow sub-gradient oracle access to each $f_i$. 

\begin{prob}\label[prob]{prob:OurProblemStatement} 
Let $f_1, f_2, \dotsc, f_n:\R^d \mapsto \R$ be convex, $L$-Lipschitz, and possibly non-smooth functions, where each $f_i$ depends on $d_i$ coordinates of $\vtheta$, and is accessible via a (sub-)gradient oracle. Define $m\defeq \sum_{i = 1}^n d_i$ to be the ``total effective dimension'' of the problem.
Let $\vtheta^\star \defeq \arg\min_{\vtheta\in \R^d} \sum_{i = 1}^n f_i (\vtheta)$ be a 
minimizer of \cref{eq:generalObj}, and let $\vtheta^{(0)}$ be an initial point such that $\|\vtheta^{(0)}-\vtheta^\star\|_2\leq R$.
We want to compute a vector $\vtheta\in \R^d$ satisfying
\[
\sum_{i=1}^n f_i (\vtheta) \leq \epsilon LR +  \sum_{i=1}^n f_i (\vtheta^\star). \numberthis\label[none]{eq:optGoal}
\]
\end{prob}

\paragraph{Prior works.} 
We focus on the weakly-polynomial regime and therefore restrict ourselves to algorithms with $\polylog(1/\epsilon)$ gradient oracle complexities.
\cref{tableresults} summarizes the performance of all well-known algorithms applied to \cref{prob:OurProblemStatement}. 
Note that the variants of gradient descent each require bounded condition number.
The results of \cite{nesterov1983method,allen2017katyusha} and cutting plane methods are all complemented by matching lower bounds \cite{woodworth2016tight,DBLP:books/sp/Nesterov04}.

\begin{table}[ht] 
\centering
\resizebox{\linewidth}{!}{
\begin{tabular}{lccc}
\toprule
Authors & Algorithm Type & \thead{Gradient Queries} & \thead{Non-smooth OK?}\\
\midrule
\cite{cauchy1847methode}  & Gradient Descent (GD) & $O(n\kappa \log(1/\epsilon))$  &   \\
\cite{nesterov1983method} & Accelerated (Acc.) GD  & $O(n\sqrt{\kappa} \log(1/\epsilon))$ & \\
\cite{roux2012stochastic,shalev2013stochastic,johnson2013accelerating}  & Stochastic (Stoch.) Variance-Reduced GD & $O((n+\overline{\kappa})\log(1/\epsilon))$ &   \\
\cite{shalev2013accelerated,lin2015universal, frostig2015regularizing, zhang2015stochastic,agarwal2015lower} & Acc. Stoch. Variance-Reduced GD & $O((n+\sqrt{n\overline{\kappa}}) \log(\overline{\kappa}) \log(1/\epsilon))$ &   \\
\cite{allen2017katyusha} & Acc. Stoch. Variance-Reduced GD  &  $O((n+\sqrt{n\overline{\kappa}}) \log(1/\epsilon))$ &  \\ 
\cite{kte88,nn89,vaidya1989new,bv02,lsw15,jlsw20} & Cutting-Plane Method (CPM) & $O(nd \log(1/\epsilon))$ &  \checkmark\\ 
\cite{lee2021tutorial,dly21} & Robust Interior-Point Method (IPM) & $O(\sum_{i=1}^n d_i^{3.5} \log(1/\epsilon))$ &  \checkmark\\ 
\bottomrule
\end{tabular}
}
\caption{Gradient oracle complexities for solving \cref{eq:generalObj} to $\epsilon$-additive accuracy. 
$\kappa$ denotes the condition number of $\sum_i f_i$, and $\overline{\kappa}$ is a variant of the condition number defined to be the sum of smoothness of the $f_i$'s divided by the strong convexity of $\sum_i f_i$.
}
\label{tableresults}
\end{table} 

Even with smooth $f_i$'s, first-order methods perform poorly when the condition number is large, or when there is a long chain of variable dependencies.
These instances commonly arise in applications; an example from signal processing is 
\[
\mbox{minimize}_{\vx} \left\{(\vx_1 -1)^2 + \sum_{i = 2}^{n-1} (\vx_i - \vx_{i+1})^2 + \vx_n^2\right\},  \numberthis\label[none]{eq:Example}
\]
whose variables form an $O(n)$-length chain of dependencies, and has condition number $\kappa = \Theta(n^2)$ and $\bar{\kappa} = \Theta(n^3)$.
Gradient descent algorithms such as \cite{nesterov1983method} and \cite{allen2017katyusha} therefore require $\Omega(n^2)$ gradient queries, despite the problem's total effective dimension being only $O(n)$.

On the other hand, cutting-plane methods (CPM) and robust interior-point methods (IPM) both trade off the dependency on condition number for worse dependencies on the problem dimension.

These significant gaps in the existing body of work motivate the following question:
\begin{quote}
\centering \emph{Can we solve \cref{prob:OurProblemStatement} 
 using a nearly-linear (in total effective dimension) number of subgradient oracle queries?} 
\end{quote}

In this paper, we give an affirmative answer to this question.

\subsection{Our results} 
We present an algorithm to solve \cref{prob:OurProblemStatement} with gradient oracle complexity nearly-linear in the total effective dimension.
\begin{thm}[Main Result (Informal); see \cref{thm:mainResult} for formal statement] \label{thm:main_informal} We give an algorithm that provably solves \cref{prob:OurProblemStatement} using $O(m \log(m/\epsilon))$ subgradient oracle queries, where $m \defeq \sum_{i=1}^n d_i$. 
\end{thm}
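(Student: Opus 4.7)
I would design an algorithm that, at each iteration, queries exactly one $f_i$, with $i$ chosen adaptively so that each $f_i$ is queried at most $\widetilde{O}(d_i)$ times, giving $\widetilde{O}(m \log(m/\epsilon))$ total queries. The framework combines a path-following interior-point method (IPM) with $n$ local cutting-plane subproblems. First reformulate \cref{eq:generalObj} as $\min \sum_i y_i$ subject to $y_i \ge f_i(\vtheta)$. For each $i$, maintain a convex outer approximation $\kini$ of the epigraph of $f_i$, living in the $(d_i{+}1)$-dimensional subspace consisting of the coordinates of $\vtheta$ that $f_i$ depends on together with the scalar $y_i$. Each $\kini$ is initialized as a ball of radius $O(R)$ and refined using subgradient cuts from queries to $f_i$. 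The global state is $(\vtheta, y)$, kept interior to $\prod_i \kini$ by IPM, with a homotopy parameter $t$ driving $\sum_i y_i$ toward the optimum.

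\textbf{Potential function and selection rule.} The analysis will be driven by a single potential
\[
\Phi \;=\; \Phi^{\text{path}}(t) \;+\; \Phi^{\text{IPM}}(\vtheta, y) \;+\; \sum_{i=1}^n \Phi_i^{\text{CP}}(\kini),
\]
where $\Phi^{\text{path}}$ tracks progress in $t$, $\Phi^{\text{IPM}}$ measures centrality of $(\vtheta, y)$ against the self-concordant barrier of $\prod_i \kini$, and each $\Phi_i^{\text{CP}}$ is a volumetric/centrality potential for the local body $\kini$. The selection rule picks the index $i$ whose local body $\kini$ is most ``off-balance'' with respect to the current $(\vtheta, y_i)$; equivalently, the $i$ whose cutting-plane step would drop $\Phi$ the most. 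The main technical lemma would be that for this selected $i$, a single query to $\nabla f_i(\vtheta)$ together with a Newton-type correction to $(\vtheta, y)$ localized to the $(d_i{+}1)$-coordinate block decreases $\Phi$ by $\Omega(1)$ amortized.

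\textbf{Counting queries.} A standard volumetric argument bounds the total admissible decrease in $\sum_i \Phi_i^{\text{CP}}$ by $\widetilde{O}(\sum_i d_i) = \widetilde{O}(m)$, since a cutting-plane body in $\R^{d_i+1}$ admits at most $\widetilde{O}(d_i)$ useful halvings before collapsing to an $\epsilon$-ball. The IPM components $\Phi^{\text{path}} + \Phi^{\text{IPM}}$ contribute at most $\widetilde{O}(m \log(1/\epsilon))$ via the short-step path-following analysis, since the self-concordance parameter of the composite barrier for $\prod_i \kini$ is $O(m)$. Combining these bounds with the per-iteration $\Omega(1)$ drop gives the advertised $\widetilde{O}(m \log(m/\epsilon))$ subgradient query complexity.

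\textbf{Main obstacle.} The hardest part will be making the per-iteration $\Omega(1)$ drop hold while touching only one $\kini$ at a time. A naive combination would require full IPM re-centering on the global barrier after each cutting-plane update, costing $\widetilde{O}(\sqrt{m})$ extra centering steps per iteration and destroying the nearly-linear guarantee. Sidestepping this likely requires the robust IPM framework, in which Hessians of the local barriers are maintained only approximately and centering work on $(\vtheta, y)$ is amortized across many cutting-plane updates; the block-diagonal separation of the $\kini$'s over disjoint coordinate ranges is precisely what makes the amortization clean and yields the $\widetilde{O}(m)$ bound rather than $\widetilde{O}(nd)$.
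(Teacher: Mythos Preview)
Your high-level plan---combine IPM path-following with per-block cutting planes, drive a composite potential, and select which $f_i$ to query adaptively---is in the same spirit as the paper. But the mechanism you propose for the per-query $\Omega(1)$ potential drop has a real gap.

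You keep only an \emph{outer} relaxation of each epigraph and refine it by subgradient cuts. When you query $f_i$ at the current $(\vtheta, y_i)$ and that point happens to lie \emph{inside} the true epigraph (i.e.\ $y_i \ge f_i(\vtheta)$), the resulting supporting hyperplane does not separate the iterate from the outer body and need not remove a constant fraction of its volume; there is no Gr\"unbaum-type guarantee in that branch, and your selection rule (``most off-balance'') gives no way to avoid it. The paper closes exactly this gap by maintaining a \emph{two-sided} sandwich $\kini \subseteq \ki \subseteq \kouti$. The oracle is queried not at the IPM iterate $\vx$ but at a separate target $\vxosi$, which---because the paper uses the \emph{entropic} barrier on $\kout$---is the centroid of an exponentially tilted distribution over the outer body. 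A query is triggered only when $\vxosi$ is certifiably far from $\kini$ (their condition~(3.2)). Then: if $\vxosi \notin \ki$, the returned hyperplane cuts $\kouti$ through its centroid and Gr\"unbaum gives a constant drop; if $\vxosi \in \ki$, it is added to $\kini$, and because the paper uses the \emph{universal} barrier $\log\vol((\kini-\vx)^\circ)$ on the inner set, condition~(3.2) together with a polar Gr\"unbaum argument again gives a constant drop. Either oracle answer yields $\Omega(1)$ decrease; your one-sided scheme has no analogue of the second branch, and no substitute is offered.

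Two smaller points. Your per-block bodies are not ``over disjoint coordinate ranges'': the $f_i$'s share coordinates of $\vtheta$. The paper first duplicates variables and enforces consistency via linear constraints $\ma\vx = \vb$, which restores a genuine product structure $\kcal = \prod_i \ki$ at the cost of carrying the affine constraint through the IPM. And the obstacle you flag---global re-centering after each cut---is not the actual bottleneck, and robust IPM is not used. The potential is designed so that each of the four step types (increase $t$, move $\vx$ toward $\vxos$, shrink some $\kouti$, grow some $\kini$) changes it by a controlled constant on its own, and these steps are simply interleaved with no re-centering after set updates.
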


Intuitively, the number of gradient queries for each $f_i$ should be thought of as $\widetilde{O}(d_i)$ in our algorithm,
which nearly matches that of the standard cutting-plane method for minimizing the individual function $f_i$.
The nearly-linear dependence on $m$ overall is obtained by leveraging the additional structure on the $f_i$'s and stands in stark
contrast to the $O(nd)$ query complexity of CPM, which is significantly worse in the case where each $d_i \ll d$.
Furthermore, we improve over the current best gradient descent algorithms in the case where the $f_i$'s have a large condition number.

Based on the query complexity of the standard cutting-plane method, we have the following lower bound matching our algorithm's query complexity up to a $\log m$-factor:
\begin{restatable}{thm}{thmLowerBound}
There exist functions $f_1, \dots, f_n : \R^d \mapsto \R$ for which a total of $\Omega(m \log(1/\epsilon))$ gradient queries are required to solve \cref{prob:OurProblemStatement}.
\end{restatable}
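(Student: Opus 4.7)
The plan is to reduce to the classical $\Omega(d\log(1/\epsilon))$ query lower bound for minimizing a single convex $L$-Lipschitz function on a radius-$R$ ball in $\R^{d}$ via a subgradient oracle (the matching lower bound for cutting-plane methods mentioned in the introduction). The reduction itself is the obvious block-decoupled construction; the work is in arguing that an adversarial query-budget allocation cannot beat the per-block lower bound.

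\textbf{Construction.} I would partition the coordinates of $\R^{m}$ into disjoint blocks $S_{1},\dots,S_{n}$ with $|S_{i}|=d_{i}$, and define $f_{i}(\vtheta):=g_{i}(\vtheta|_{S_{i}})$, where each $g_{i}:\R^{d_{i}}\to\R$ is the classical Nemirovski–Yudin hard instance for $d_{i}$-dimensional non-smooth minimization, rescaled to be $L$-Lipschitz with its minimizer lying within distance $R$ of the initialization. Each $f_{i}$ depends on exactly $d_{i}$ coordinates of $\vtheta$, so the construction is a valid instance of \cref{prob:OurProblemStatement}. By disjointness of the $S_{i}$, the global minimizer $\vtheta^{\star}$ minimizes each block independently and
\[
\textstyle\sum_{i}f_{i}(\vtheta)-\sum_{i}f_{i}(\vtheta^{\star})=\sum_{i}\bigl(g_{i}(\vtheta|_{S_{i}})-g_{i}(\vtheta^{\star}|_{S_{i}})\bigr).
\]

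\textbf{Main argument.} A subgradient query to $f_{i}$ at any $\vtheta$ returns a subgradient of $g_{i}$ at $\vtheta|_{S_{i}}$, embedded in the coordinates $S_{i}$, and reveals no information about $g_{j}$ for $j\neq i$. Hence, if an algorithm $\mathcal{A}$ for the summed problem makes $Q_{i}$ queries to $f_{i}$ in total (over the run on the joint instance), then projecting $\mathcal{A}$'s execution onto block $i$ yields a valid algorithm that solves the $d_{i}$-dimensional subproblem with only $Q_{i}$ subgradient queries to $g_{i}$. The per-block lower bound then gives a universal constant $c>0$ such that the block-$i$ suboptimality satisfies $g_{i}(\vtheta|_{S_{i}})-g_{i}(\vtheta^{\star}|_{S_{i}})\geq LR\cdot\exp(-cQ_{i}/d_{i})$. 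Summing, any algorithm solving the joint problem to accuracy $\epsilon LR$ must satisfy $\sum_{i}\exp(-cQ_{i}/d_{i})\leq\epsilon$.

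\textbf{Extracting the bound.} Minimizing $\sum_{i}Q_{i}$ subject to $\sum_{i}\exp(-cQ_{i}/d_{i})\leq\epsilon$ is a convex problem in the $Q_{i}$. A short Lagrangian/AM–GM calculation (or simply noting that the constraint already forces each summand to be $\leq\epsilon$, so $Q_{i}\geq(d_{i}/c)\log(1/\epsilon)$) shows that $\sum_{i}Q_{i}\geq\Omega(m\log(1/\epsilon))$, as claimed.

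\textbf{Main obstacle.} The delicate point is that $\mathcal{A}$ is adaptive and possibly randomized, and may try to shift its query budget between blocks dynamically. Adaptivity is already handled by the standard per-block resisting-oracle construction underlying the CPM lower bound; randomization is handled by invoking Yao's minimax principle on the product distribution over hard $g_{i}$'s and taking the expected per-block error. The budget-shifting concern is addressed by the constraint-optimization step above, which shows that even the best allocation of a total of $o(m\log(1/\epsilon))$ queries necessarily leaves some block far from optimality, and hence the summed error above $\epsilon LR$.
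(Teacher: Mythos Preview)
Your proposal is correct and follows the same approach as the paper: place independent Nemirovski--Yudin hard instances on disjoint coordinate blocks and sum the per-block lower bounds. The paper's own proof is a two-line sketch that simply asserts ``it follows that $\Omega(\sum_i d_i\log(1/\epsilon))$ queries are required,'' whereas you actually supply the missing reduction---namely, that projecting an adaptive algorithm's interaction onto block $i$ yields a valid $Q_i$-query algorithm for $g_i$, and that the resulting constraint $\sum_i\exp(-cQ_i/d_i)\leq\epsilon$ forces $\sum_i Q_i=\Omega(m\log(1/\epsilon))$. So your argument is a strict elaboration of the paper's.

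One minor point worth tightening: you set each block's minimizer within distance $R$ of the initialization, but then the \emph{global} distance in \cref{prob:OurProblemStatement} becomes $R\sqrt{n}$, so the target accuracy is $\epsilon L R\sqrt{n}$ rather than $\epsilon L R$. Your summand bound then gives $Q_i\geq(d_i/c)\log(1/(\epsilon\sqrt{n}))$, which is still $\Omega(m\log(1/\epsilon))$ in the relevant regime $\epsilon\leq 1/\mathrm{poly}(n)$ but deserves a remark. (The paper's proof does not address this scaling either.)
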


An immediate application of \Cref{thm:main_informal} is to decomposable submodular function minimization:

\begin{thm}[Decomposable SFM] \label{thm:decompSFM_intro}
Let $V = \{1, 2, \dots, n\}$, and 
let $F: 2^V \mapsto [-1,1]$ be given by $F(S) = \sum_{i = 1}^n F_i(S \cap V_i)$,  each $F_i: 2^{V_i} \mapsto \mathbb{R}$  a submodular function on $V_i \subseteq V$ with $|V_i| \leq k$. 
We can find an $\epsilon$-additive approximate minimizer of $F$ in $O(n k^2 \log(nk/\epsilon))$ evaluation oracle calls. 
\end{thm}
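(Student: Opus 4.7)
The plan is to reduce decomposable SFM to \cref{prob:OurProblemStatement} via the Lov\'asz extension and then invoke \cref{thm:main_informal}. For each submodular $F_i$, let $\widehat{F}_i : \R^{V_i} \to \R$ denote its Lov\'asz extension, regarded as a function on $\R^d$ that depends only on the $d_i \defeq |V_i| \leq k$ coordinates indexed by $V_i$. Each $\widehat{F}_i$ is convex and positively homogeneous, and $\widehat{F} \defeq \sum_{i=1}^n \widehat{F}_i$ is the Lov\'asz extension of $F$. The standard correspondence gives $\min_{S \subseteq V} F(S) = \min_{\vx \in [0,1]^V} \widehat{F}(\vx)$, so it suffices to approximately minimize $\widehat F$ on the hypercube.

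Next, I verify that each $\widehat{F}_i$ fits into the oracle model of \cref{prob:OurProblemStatement}. A subgradient of $\widehat{F}_i$ at $\vx$ is obtained by sorting $\vx|_{V_i}$ in decreasing order $\sigma(1), \dots, \sigma(d_i)$ and taking the $j$-th entry to be $F_i(\{\sigma(1),\dots,\sigma(j)\}) - F_i(\{\sigma(1),\dots,\sigma(j-1)\})$; this costs $O(d_i \log d_i)$ time and at most $d_i \leq k$ evaluation-oracle calls to $F_i$. Since $|F| \leq 1$, a normalization argument bounds each $|F_i|$ by a constant, which yields an $\ell_2$-Lipschitz constant $L_i = O(\sqrt{k})$ for $\widehat{F}_i$. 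The domain $[0,1]^V$ has diameter $R = O(\sqrt{n})$.

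To fold the box constraints into the unconstrained form of \cref{prob:OurProblemStatement}, I append $n$ one-dimensional Lipschitz penalty terms of the form $g_v(\vx) = M \cdot (\max(0, -\vx_v) + \max(0, \vx_v - 1))$ with $M$ a sufficiently large polynomial in the problem parameters; these each depend on a single coordinate, contributing only $O(n)$ to the total effective dimension. Applying \cref{thm:main_informal} to the resulting sum of $2n$ functions gives $m = \sum_i d_i + n \leq nk + n = O(nk)$ and produces an $\epsilon'$-approximate minimizer $\widehat{\vx}$ of $\widehat{F}$ on $[0,1]^V$ using $O(m \log(m / \epsilon')) = O(nk \log(nk/\epsilon'))$ subgradient queries. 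Choosing $\epsilon' = \Theta(\epsilon / (L R))$ yields $\epsilon$-additive accuracy for $\widehat F$ itself. Multiplying by the $O(k)$ evaluation-oracle cost per subgradient query gives the claimed $O(nk^2 \log(nk/\epsilon))$ bound.

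Finally, I would round $\widehat{\vx}$ to a set $S$ by evaluating the $n+1$ distinct level sets $S_\theta = \{v : \widehat{\vx}_v \geq \theta\}$ arising from the sorted coordinates of $\widehat{\vx}$ and returning the best; the Lov\'asz-extension identity $\widehat{F}(\widehat{\vx}) = \E_{\theta \sim [0,1]}[F(S_\theta)]$ guarantees $F(S) \leq \widehat{F}(\widehat{\vx}) \leq \min_{S'} F(S') + \epsilon$. The main obstacle I anticipate is a careful bookkeeping of the Lipschitz constants, the penalty coefficient $M$, and the scaling of $\epsilon'$ so that the box-constraint reduction does not blow up any of the logarithmic factors beyond $\log(nk/\epsilon)$; these are technical but standard, since the algorithm behind \cref{thm:main_informal} is interior-point-based and naturally tolerates such polyhedral constraints as additional low-dimensional barrier terms.
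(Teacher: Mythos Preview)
Your proposal follows essentially the paper's route: apply the main theorem to the sum of Lov\'asz extensions $\widehat{F}_i$, pay $O(k)$ evaluation-oracle calls per subgradient of $\widehat{F}_i$, and round by taking the best of the $|V|+1$ level sets of the returned point. The only notable difference is that you enforce the box constraint $[0,1]^V$ by appending $n$ one-dimensional Lipschitz penalties, whereas the paper simply invokes \cref{thm:mainResult} directly on the bounded domain (the underlying algorithm already works over bounded $\kcal_i$'s, so no penalties are needed); both your argument and the paper's are equally loose about deducing a Lipschitz bound on each individual $\widehat{F}_i$ from $|F|\le 1$ alone.
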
 

\Cref{thm:decompSFM_intro} significantly improves over the evaluation oracle complexity of $\widetilde{O}(n k^6 \log (1/\epsilon))$ given in \cite{axiotis2021decomposable} when the dimension $k$ of each function $F_i$ is large. 
For non-decomposable SFM, i.e. $n = 1$ and $|V_1| = k$, the current best weakly-polynomial time SFM algorithm\footnote{Here, we focus on the weakly-polynomial regime, where the runtime dependence on $\epsilon$ is $\log(1/\epsilon)$.} finds an $\epsilon$-approximate minimizer in time $O(k^2 \log(k/\epsilon))$ \cite{lsw15}. 
Therefore, our result in \Cref{thm:decompSFM_intro} can be viewed as a generalization of the evaluation oracle complexity for non-decomposable SFM in \cite{lsw15}, and the dependence on $k$ in \Cref{thm:decompSFM_intro} might be the best possible. We defer the details of decomposable SFM to \Cref{sec:decompSFM}.

\subsubsection{Limitations}\label[sec]{sec:limitations}
Some limitations of our algorithm are as follows: 
When each $f_i$ depends on the entire $d$-dimensional vector $\vtheta$, as opposed to a subset of the coordinates of size $d_i\ll d$, our gradient complexity simply matches that of CPM. We would like to highlight, though, that our focus is in fact the regime $d_i \ll d$. 
When the $f_i$'s are strongly-convex and smooth, our gradient complexity improves over \cref{tableresults} only when $\kappa$ is large compared to $d_i$. 
Finally, note that we consider only the gradient oracle complexity in our work; 
our algorithm's implementation requires sampling a Hessian matrix and a gradient vector at every iteration, which incur an additional polynomial factor in the overall running time.

\subsection{Technical challenges in prior works}\label[sec]{sec:TechChallenges}
We now describe the key technical challenges that barred existing algorithms from 
solving \cref{prob:OurProblemStatement} in the desired nearly-linear gradient complexity. 

\paragraph{Gradient descent and variants.} As mentioned in \cref{sec:Introduction}, the family of  gradient descent algorithms presented in \cref{tableresults} are not applicable to \cref{prob:OurProblemStatement} without the smoothness assumption. When the objective in \cref{prob:OurProblemStatement} is smooth but has a large condition number, even the optimal deterministic algorithm, Accelerated Gradient Descent (AGD) \cite{nesterov1983method} can perform poorly. For example, when applied to \cref{eq:Example}, AGD updates only one coordinate in each step (thereby requiring $n$ steps), with each step performing $n$ gradient queries (one on each term in the problem objective), yielding a total gradient complexity of $\Omega(n^2)$ \cite{nesterov1983method}. For a similar reason, the fastest randomized algorithm, Katyusha \cite{allen2017katyusha} also incurs a gradient complexity of $\Omega(n^2)$ \cite{woodworth2016tight}. 

\paragraph{Cutting-plane methods (CPM).} Given a convex function $f$ with its set $\mathcal{S}$ of minimizers, CPM minimizes $f$ by maintaining a convex search set $\mathcal{E}^{(k)}\supseteq \mathcal{S}$ in the $k^{\mathrm{th}}$ iteration, and iteratively shrinking $\mathcal{E}^{(k)}$ using the subgradients of $f$. 
Specifically, this is achieved by noting that for any $\vx^{(k)}$ chosen from $\mathcal{E}^{(k)}$, 
if the gradient oracle indicates $\nabla f(\vx^{(k)}) \neq 0$, (i.e.\ $\vx^{(k)}\notin \mathcal{S}$), 
then the convexity of $f$ guarantees $\mathcal{S} \subseteq \mathcal{H}^{(k)} \defeq \left\{\vy: \inprod{\nabla f(\vx^{(k)})}{\vy - \vx^{(k)}} \leq 0 \right\}$, and hence $\mathcal{S} \subseteq \mathcal{H}^{(k)} \cap \mathcal{E}^{(k)}$. 
The algorithm continues by choosing $\mathcal{E}^{(k+1)} \supseteq \mathcal{E}^{(k)} \cap \mathcal{H}^{(k)}$, and different choices of $\vx^{(k)}$ and $\mathcal{E}^{(k)}$ yield different rates of shrinkage of $\mathcal{E}^{(k)}$ until a point in $\mathcal{S}$ is found. 

Solving \cref{prob:OurProblemStatement} via the current fastest CPM \cite{jlsw20} takes $\widetilde{O}(d)$ iterations, each invoking the gradient oracle on every  $f_i$ to compute $\nabla f(\vx^{(k)}) = \sum_{i = 1}^n \nabla f_i(\vx^{(k)})$. 
This results in $\widetilde{O}(nd)$ gradient queries overall, which can be quadratic in $n$ when $d = \Theta(n)$ even if each $f_i$ depends on only $d_i = O(1)$ coordinates. Similar to gradient descent and its variants, the poor performance of CPM on \cref{prob:OurProblemStatement} may therefore be attributed to their inability to query the right $f_i$ required to make progress.

\paragraph{Interior-point methods (IPM).} IPM solves the convex program $\min_{\vu\in \mathcal{S}} \inprod{\vc}{\vu}$ by solving a sequence of unconstrained problems $\min_{\vu} \Psi_{t}(\vu)\defeq \left\{t \cdot \inprod{\vc}{\vu} + \psi_{\mathcal{S}}(\vu)\right\}$ parametrized by increasing $t$, 
where $\psi_{\mathcal{S}}$ is a \emph{self-concordant barrier} function that enforces feasibility by becoming unbounded as it approaches the boundary of the feasible set $\mathcal{S}$. The algorithm starts at $t = 0$, for which an approximate minimizer $\vx_0^\star$ of $\psi_{\mathcal{S}}$ is known, and alternates between increasing $t$ and updating to an approximate minimizer $\vx_t^\star$ of the new $\Psi_{t}$ via Newton's method. 
For a sufficiently large $t$, the minimizer $\vx_t^\star$ also approximately optimizes the original problem $\min_{\vu\in \mathcal{S}} \inprod{\vc}{\vu}$ with sub-optimality gap $O(\nu/t)$, where $\nu$ is the self-concordance parameter of the barrier function used.

To apply IPM to \Cref{prob:OurProblemStatement}, we may first transform \Cref{eq:generalObj} to  $\min_{(\vu,\vz) \in \kcal}  \sum_i \vz_i$, where $\kcal = \{(\vu,\vz): (\vu_i, \vz_i) \in \kcal_i, \forall i \in [n]\}$ is the feasible set. 
Using the universal barrier $\psi_i$ for each $\kcal_i$ \cite{nesterov1994interior}, the number of iterations of IPM is  $\widetilde{O}(\sqrt{\sum_{i=1}^n d_i})$, each requiring the computation of the Hessian and gradient of $\psi_i$ for all $i\in [n]$, leading to a total of $\widetilde{O}(n^{1.5})$ sub-gradient queries to $f_i$'s even when all $d_i = O(1)$. 
Even when leveraging the recent framework of robust IPM for linear programs \cite{lee2021tutorial}, the computation of each Hessian (by sampling the corresponding $\kcal_i$ \cite{halv21}) yields a total sub-gradient oracle complexity of $\widetilde{O}(\sum_{i=1}^n d_i^{3.5})$, far from the complexity we seek.

\subsection{Our algorithmic framework}\label[sec]{sec:OurAlgFrm} We now give an overview of the techniques developed in this work to overcome the above barriers. 
First, we transform \cref{eq:generalObj} into a convex program over structured convex sets: 
\[
\begin{array}{ll}
    \mbox{minimize} & \inprod{\vc}{\vx},  \\
     \mbox{subject to} & \vxi\in\ki\subseteq \R^{d_i + 1} \;\forall i\in[n]\\
     &  \ma\vx=\vb.
\end{array}\numberthis\label[none]{eq:TechOutlineReducedProb}
\]
\\where $\vx$ is the concatenation of the vectors $\vx_1, \dots, \vx_n$, and notably the convex sets $\ki$ are all disjoint.
Under this transformation we do not have explicit, closed-form expressions for each $\ki$; however, the subgradient oracle for $f_i$ can be transformed equivalently to a separation oracle $\ki$. 
We define $\kcal \defeq \kcal_1 \times \kcal_2 \times \dotsc \times \kcal_n$.

\paragraph{Main idea: combining CPM and IPM.} Recall that CPM maintains a convex set which initially contains the feasible region and gradually shrinks around the minimizer, while IPM maintains a point inside the feasible region that moves toward the minimizer. 
Our novel idea is to combine both methods and maintain \emph{an inner convex set $\kini$ as well as an outer convex set $\kouti$} for each $i \in [n]$, such that $\kini \subseteq \ki\subseteq \kouti$. 
We define $\kin$ and $\kout$ analogously to $\kcal$.
When  \cref{eq:Condition1} and \cref{eq:Condition2} are satisfied for all $i\in [n]$, we make IPM-style updates without needing to make any oracle calls.
When \cref{eq:Condition2} is violated for some $i \in [n]$, we query the separation oracle at the point $\vxosi$ defined as the centroid of $\kouti$ (c.f. \cref{prop:EquivalenceOfCentroidAndMinimizer}). 
Based on the oracle's response, we iteratively either grow $\kini$ (and, thus, $\kin$) outward or shrink $\kouti$ (and, thus, $\kout$) inward, until ultimately they approximate $\kcal$ around the optimum point.

\paragraph{First benefit: large change in volume.} 
If the point $\vxosi$ violates \cref{eq:Condition2} for some $i\in[n]$, we query the separation oracle to see if $\vxosi\in \ki$ or not. 
If $\vxosi\in \ki$, then it is used to expand $\kini$, yielding in a large volume increase for $\kini$. 
On the other hand, if $\vxosi\notin\ki$, the fact that it is the centroid of $\kouti$ results in
a large volume decrease for $\kouti$ when it is intersected with a halfspace through $\vxosi$. 
Thus, our algorithm witnesses a large change in volume of one of $\kini$ and $\kouti$, regardless of the answer from the oracle. Just like in standard CPM, this rapid change in volume is crucial to achieving the algorithm's oracle complexity. 

\paragraph{Second benefit: making a smart choice about querying $f_i$.} Since the algorithm maintains both an inner and outer set approximating $\kcal$, by checking if $\kini$ and $\kouti$ differ significantly (\cref{eq:Condition2} essentially performs this function), we can determine if $\ki$ is poorly approximated, and if so, improve the inner and outer approximations of the true feasible set. 
Choosing the right $\ki$ translates to choosing the right $f_i$ to make progress with at an iteration; thus, we address the central weakness of the gradient descent variants in solving \cref{eq:generalObj}.

\section{Notation and preliminaries\label{sec:Notation-and-Preliminaries}}

We lay out the notation used in our paper as well as the definitions
and prior known results that we rely on. We use lowercase
boldface letters to denote (column) vectors and uppercase boldface letters
to denote matrices. We use $\vxi$ to denote the $i^{\mathrm{th}}$
 block of coordinates in the vector $\vx$ (the ordering of these blocks is not important in our setup).  We use $\succeq$ and $\preceq$ to denote the Loewner ordering of matrices. 

We use $\inprod{\vx}{\mathbf{\vy}}$ to mean the Euclidean inner product $\vx^\top \vy$.
A subscript $\vx$ in the inner product notation means it is induced by the
Hessian of some function (which is clear from context) at $\vx$; for example,
$\inprod{\mathbf{u}}{\mathbf{v}}_{\vx}= \vu^\top \nabla_{ii}^{2}\barr(\vx) \vv$ with $\barr$ inferred from context. We define the local norm of $\vv$ at $\vx$ analogously:
$\|\vv\|_{\vx}=\sqrt{\inprod{\vv}{\nabla^{2}\barr(\vx)\cdot\vv}}.$ We also define the norm $\|\vv\|_{\vx, 1} \defeq \sum_{i = 1}^n \|\vv\|_{\vxi}$. 

We use $\psi$ to represent barrier functions and $\pot$ to represent
potential functions, with appropriate subscripts and superscripts
to qualify them as needed.

\subsection{Facts from convex analysis}

In this section, we present some definitions and properties from convex
analysis that are useful in our paper. These results are standard
and may be found in, for example, \cite{rockafellar1970convex,boyd2004convex}. 
\begin{defn}
\label[defn]{defn:Conjugate} Let $f:\R^{n}\rightarrow\R.$ Then the function
$f^{\ast}:\Rn\rightarrow\R$ defined as 
\[
f^{\ast}(\vy)=\sup_{\vx\in\textrm{dom}(f)}\left[\inprod{\vx}{\vy}-f(\vx)\right]
\]
is called the Fenchel conjugate of the function $f.$ An immediate consequence
of the definition (and by applying the appropriate convexity-preserving
property) is that $f^{\ast}$ is convex, regardless of the convexity
of $f.$ We use the superscript $\ast$ on functions to denote their
conjugates. 
\end{defn}

\begin{lem}[Biconjugacy]
\label[lem]{lem:fastast}  For a closed, convex function $f,$ we have
$f=f^{\ast\ast}.$ 
\end{lem}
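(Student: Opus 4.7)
The plan is to show both directions of $f = f^{**}$ separately. The easy direction $f^{**} \leq f$ is immediate from \Cref{defn:Conjugate}: for every $x$ and $y$, one has $f^*(y) \geq \inprod{x}{y} - f(x)$, so $\inprod{x}{y} - f^*(y) \leq f(x)$, and taking the supremum over $y$ gives $f^{**}(x) \leq f(x)$. This step needs neither convexity nor closedness of $f$.

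For the reverse inequality $f^{**}(x_0) \geq f(x_0)$, I would argue geometrically in $\R^{n+1}$ via the epigraph. Because $f$ is closed and convex, $\mathrm{epi}(f) := \{(x, t) : t \geq f(x)\}$ is a closed convex set; assume also that $f$ is proper so that $f^*$ is proper. Fix $x_0$ and any $\alpha < f(x_0)$, so $(x_0, \alpha) \notin \mathrm{epi}(f)$. Applying the strict separating hyperplane theorem to this closed convex set and the external point produces $(y, \beta) \in \R^n \times \R$ and $c \in \R$ with $\inprod{y}{z} + \beta t \geq c$ for every $(z, t) \in \mathrm{epi}(f)$, while $\inprod{y}{x_0} + \beta \alpha < c$. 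Sending $t \to +\infty$ inside the epigraph forces $\beta \geq 0$.

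In the generic case $\beta > 0$, rescaling by $\beta$ produces an affine minorant $a(x) = \inprod{w}{x} - d$ of $f$ with $a(x_0) > \alpha$, where $w := -y/\beta$ and $d := -c/\beta$. The inequality $a \leq f$ is exactly $f^*(w) \leq d$, and therefore
\[
f^{**}(x_0) \;\geq\; \inprod{w}{x_0} - f^*(w) \;\geq\; \inprod{w}{x_0} - d \;=\; a(x_0) \;>\; \alpha,
\]
so letting $\alpha \uparrow f(x_0)$ finishes this branch. The step I expect to be the main obstacle is the degenerate case $\beta = 0$ (a ``vertical'' separating hyperplane), which can only occur when $x_0 \notin \mathrm{dom}(f)$, so the target becomes $f^{**}(x_0) = +\infty$. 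I plan to handle it via a perturbation: pick any $w_0$ with $f^*(w_0) < \infty$ (guaranteed because $f$ is closed, convex, and proper, hence admits an affine minorant), and use the separation inequality $\inprod{y}{z} \geq c$ on $\mathrm{dom}(f)$ to get $f^*(w_0 - \lambda y) \leq f^*(w_0) - \lambda c$ for each $\lambda > 0$. This yields
\[
f^{**}(x_0) \;\geq\; \inprod{w_0}{x_0} - f^*(w_0) + \lambda \bigl(c - \inprod{y}{x_0}\bigr),
\]
which tends to $+\infty$ as $\lambda \to \infty$ since $c - \inprod{y}{x_0} > 0$. Combining both cases yields $f^{**} \geq f$ pointwise, completing the proof.
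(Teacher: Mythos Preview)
Your proof is correct and follows the standard epigraph–separation argument found in Rockafellar. Note, however, that the paper does not actually prove this lemma: it is listed in Section~\ref{sec:Notation-and-Preliminaries} among ``standard'' facts ``that may be found in, for example, \cite{rockafellar1970convex,boyd2004convex}'' and is simply cited without proof. So there is nothing to compare against; your write-up supplies exactly the textbook argument the paper defers to.
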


\begin{lem}[\cite{rockafellar1970convex}]
\label[lem]{lem:GradConjugate} For a closed, convex differentiable function
$f,$ we have $\vy=\nabla f(\vx)\iff\vx=\nabla f^{\ast}(\vy).$ 
\end{lem}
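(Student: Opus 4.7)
The plan is to prove both directions of the equivalence by combining the variational definition of $f^*$ with a first-order optimality argument for its defining supremum, then closing up the reverse implication via biconjugacy. The key mechanism is the Fenchel--Young equality: $f(\vx)+f^*(\vy)=\inprod{\vx}{\vy}$, which I will show is equivalent to each of the two gradient identities in the statement.

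For the forward direction $\vy=\nabla f(\vx)\Rightarrow \vx=\nabla f^*(\vy)$, I would first observe that the map $\vu\mapsto \inprod{\vu}{\vy}-f(\vu)$ appearing in \cref{defn:Conjugate} is concave and differentiable with gradient $\vy-\nabla f(\vu)$. By hypothesis this gradient vanishes at $\vu=\vx$, so $\vx$ attains the supremum and we obtain
\[
f^*(\vy) = \inprod{\vx}{\vy}-f(\vx).
\]
Then, for any $\vy'$, plugging the feasible (but generally suboptimal) choice $\vu=\vx$ into the supremum defining $f^*(\vy')$ gives $f^*(\vy')\geq \inprod{\vx}{\vy'}-f(\vx)$. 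Subtracting the equality above yields the subgradient inequality
\[
f^*(\vy')-f^*(\vy) \geq \inprod{\vx}{\vy'-\vy}\quad\text{for all }\vy',
\]
so $\vx\in\partial f^*(\vy)$. Because $f^*$ is convex and is differentiable at $\vy$ (its subdifferential reduces to a singleton whenever the maximizer in \cref{defn:Conjugate} is unique), we conclude $\vx=\nabla f^*(\vy)$.

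For the reverse direction, I would invoke biconjugacy. By \cref{lem:fastast}, $f=f^{**}$, and $f^*$ is itself closed and convex. Applying the forward direction to $f^*$ in place of $f$ then gives $\vx=\nabla f^*(\vy)\Rightarrow \vy=\nabla f^{**}(\vx)=\nabla f(\vx)$, completing the equivalence.

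The main technical subtlety is justifying that $\partial f^*(\vy)$ is a singleton, so that the subgradient membership $\vx\in\partial f^*(\vy)$ upgrades to the gradient equality $\vx=\nabla f^*(\vy)$. This is a standard fact from convex analysis: differentiability of the closed convex $f$ together with attainment of equality in Fenchel--Young at $(\vx,\vy)$ forces $f^*$ to be differentiable at $\vy$. Modulo this well-known point, the proof reduces cleanly to the two-step observation that the Fenchel--Young equality serves as the common bridge between the two gradient identities.
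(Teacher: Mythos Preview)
The paper does not prove this lemma; it merely cites \cite{rockafellar1970convex} and states the result without argument. So there is no ``paper proof'' to compare against, and your approach---Fenchel--Young equality as the bridge, then biconjugacy for the reverse direction---is the standard textbook one.

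That said, the ``technical subtlety'' you flag at the end is a genuine gap, and your proposed resolution is incorrect. You claim that ``differentiability of the closed convex $f$ together with attainment of equality in Fenchel--Young at $(\vx,\vy)$ forces $f^*$ to be differentiable at $\vy$.'' This is false. Take $f\equiv 0$ on $\R^n$: then $f$ is closed, convex, and differentiable with $\nabla f(\vx)=\mathbf{0}$ everywhere, and Fenchel--Young holds with equality at every pair $(\vx,\mathbf{0})$. But $f^*$ is the indicator of $\{\mathbf{0}\}$, whose subdifferential at $\mathbf{0}$ is all of $\R^n$, so $f^*$ is not differentiable there. More generally, $\partial f^*(\vy)$ is exactly the set of maximizers of $\vu\mapsto\inprod{\vu}{\vy}-f(\vu)$; this is a singleton if and only if that maximizer is unique, which differentiability of $f$ alone does not guarantee---you need strict convexity of $f$.

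In fairness, the lemma as stated in the paper is itself imprecise on this point: the hypotheses ``closed, convex, differentiable'' are not enough to ensure $\nabla f^*$ exists. The paper gets away with it because the only place the lemma is invoked (the proof of \cref{lem:EtChange}) applies it to a self-concordant barrier, which has positive-definite Hessian and is therefore strictly convex. Your write-up would be clean if you simply added strict convexity of $f$ to the hypotheses (or, equivalently, assumed $f^*$ is differentiable) and dropped the incorrect justification.
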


\begin{lem}[\cite{rockafellar1970convex}]
\label[lem]{lem:HessianConjugate} For a strictly convex, twice-differentiable
function $f$, we have $\nabla^{2}f^{\ast}(\nabla f(\vx))=(\nabla^{2}f(\vx))^{-1}.$ 
\end{lem}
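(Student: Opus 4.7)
The plan is to derive the Hessian identity by differentiating the gradient inversion formula from \cref{lem:GradConjugate}. The central observation is that strict convexity of $f$ makes $\nabla f$ an injective map on the interior of its domain, and \cref{lem:GradConjugate} tells us that its inverse is $\nabla f^\ast$; differentiating this identity then yields the desired Hessian relationship.

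More concretely, first I would invoke \cref{lem:GradConjugate} to write
\[
\nabla f^\ast(\nabla f(\vx)) = \vx
\]
as an identity valid for every $\vx$ in the (open) domain on which $f$ is strictly convex and twice differentiable. Since $f$ is twice differentiable, $\nabla f$ is continuously differentiable, and since $f$ is strictly convex, $\nabla^2 f(\vx)$ is positive definite and hence invertible. By the inverse function theorem, $\nabla f^\ast$ is therefore differentiable at $\nabla f(\vx)$, so one may apply the multivariate chain rule to the left-hand side.

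Next, I would differentiate both sides of the above identity with respect to $\vx$. The right-hand side differentiates to the identity matrix $I$, while the chain rule on the left-hand side gives
\[
\nabla^2 f^\ast(\nabla f(\vx)) \cdot \nabla^2 f(\vx) = I.
\]
Multiplying on the right by $(\nabla^2 f(\vx))^{-1}$, whose existence is guaranteed by strict convexity, produces exactly the claimed identity $\nabla^2 f^\ast(\nabla f(\vx)) = (\nabla^2 f(\vx))^{-1}$.

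The only step requiring any care is justifying that $\nabla f^\ast$ is differentiable at the point $\nabla f(\vx)$, so that the chain rule actually applies; this is precisely where strict convexity is used, via the inverse function theorem applied to the locally $C^1$-diffeomorphism $\nabla f$. Everything else is a direct calculation from \cref{lem:GradConjugate}.
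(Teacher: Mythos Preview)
The paper does not supply its own proof of this lemma; it is simply cited from \cite{rockafellar1970convex} as a standard fact. Your argument---differentiate the identity $\nabla f^\ast(\nabla f(\vx)) = \vx$ from \cref{lem:GradConjugate} and apply the chain rule---is exactly the standard derivation and is correct.

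One small caveat: strict convexity of a twice-differentiable function does \emph{not} by itself guarantee that $\nabla^2 f(\vx)$ is positive definite at every point (e.g.\ $f(x)=x^4$ at $x=0$), so your sentence ``since $f$ is strictly convex, $\nabla^2 f(\vx)$ is positive definite'' is not literally true in full generality. The lemma as stated implicitly assumes the Hessian is invertible (otherwise the right-hand side is undefined), and in the paper's applications $f$ is always a self-concordant barrier whose Hessian is nondegenerate, so this is a non-issue in context---but you may want to phrase that step as an assumption rather than a consequence of strict convexity.
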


\ifdefined\isneurips
\defPolar*
\lemPolarIntersection*
\else 
\begin{restatable}[Polar of a Set \cite{rockafellar2009variational}]{defn}{defPolar}
\label[defn]{def:Polar} Given a set $\mathcal{S}\subseteq\Rn,$ its polar
is defined as 
\[
\mathcal{S}^{\circ}\defeq\left\{ \vy\in\Rn:\inprod{\vy}{\vx}\leq1,\text{ \ensuremath{\forall\vx\in\mathcal{S}}}\right\} .
\]
\end{restatable}

\begin{restatable}[\cite{rockafellar1970convex}]{lem}{lemPolarIntersection}
\label[lem]{lem:polarAfterAddingPoint} 
Let $\mathcal{S}\subseteq\Rn$ be a closed, compact, convex
set, and let $\vy$ be a point. 
Then $(\conv\left\{ \mathcal{S},\vy\right\} )^{\circ}\subseteq\mathcal{S}^{\circ}\cap\h$,
where $\h$ is the halfspace defined by $\h=\left\{ \vz\in\R^{n}:\inprod{\vz}{\vy}\leq1\right\}$.
\end{restatable}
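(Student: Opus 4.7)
The plan is to prove the containment pointwise by unpacking the definition of the polar. Fix an arbitrary $\vw \in (\conv\{\mathcal{S},\vy\})^\circ$. By \cref{def:Polar}, this means $\inprod{\vw}{\vx} \leq 1$ for every $\vx$ in the convex hull $\conv\{\mathcal{S},\vy\}$.

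I would then extract the two conclusions from this single inequality by specializing to two sub-collections of test points. First, since $\mathcal{S} \subseteq \conv\{\mathcal{S},\vy\}$, the inequality holds in particular for every $\vx \in \mathcal{S}$, which is exactly the defining condition for $\vw \in \mathcal{S}^\circ$. Second, since $\vy \in \conv\{\mathcal{S},\vy\}$, the inequality at $\vx = \vy$ gives $\inprod{\vw}{\vy} \leq 1$, i.e.\ $\vw \in \h$. Combining these shows $\vw \in \mathcal{S}^\circ \cap \h$, completing the proof.

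\textbf{Main obstacle.} There is essentially none --- the lemma is a direct consequence of the monotonicity of the polar operation (if $A \subseteq B$ then $B^\circ \subseteq A^\circ$), applied to the obvious inclusions $\mathcal{S} \subseteq \conv\{\mathcal{S},\vy\}$ and $\{\vy\} \subseteq \conv\{\mathcal{S},\vy\}$, with the halfspace $\h$ recognized as the polar $\{\vy\}^\circ$. The hypotheses that $\mathcal{S}$ be closed, compact, and convex are not used in this direction; they would only be needed if one wanted the reverse inclusion (which actually also holds, giving equality, via decomposing any $\vx' \in \conv\{\mathcal{S},\vy\}$ as $\lambda \vx + (1-\lambda)\vy$ with $\vx \in \mathcal{S}$ and taking a convex combination of the two scalar inequalities), but the statement only asks for one direction.
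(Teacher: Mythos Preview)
Your proof is correct. The paper does not actually provide its own proof of this lemma --- it is stated in the preliminaries as a cited fact from \cite{rockafellar1970convex} --- so there is nothing to compare against. Your argument via the monotonicity of the polar (equivalently, specializing the defining inequality of $(\conv\{\mathcal{S},\vy\})^\circ$ to the sub-collections $\mathcal{S}$ and $\{\vy\}$) is the standard one, and your observation that the closed/compact/convex hypotheses are unnecessary for the stated direction is accurate.
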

\fi 

\subsection{Background on interior-point methods}

Our work draws heavily upon geometric properties of self-concordant
functions, which underpin the rich theory of interior-point methods. 
We list below the formal results needed for our analysis,
and refer the reader to \cite{nesterov1994interior, renegar2001mathematical} for a detailed exposition of this function
class. 
We begin with the definitions of self-concordant functions and self-concordant barriers:
\ifdefined\isneurips
\defScBarr*
\else 
\begin{restatable}[Self-concordance \cite{nesterov1994interior}]{defn}{defScBarr}
\label[defn]{def:SelfConcordanceBarr}
A function $F:Q\mapsto\R$ is a self-concordant function on a convex set $Q$ if for any $\vx\in Q$ and any $\vh$, 
\[
\lvert D^3 F(\vx)[\vh, \vh, \vh]\rvert \leq 2(D^2 F(\vx)[\vh, \vh])^{3/2}, 
\] 
where $D^k F(\vx)[\vh_1, \dotsc, \vh_k]$ denotes the $k$-th derivative of $F$ at $\vx$ along the directions $\vh_1,\dotsc,\vh_k$. We say $F$ is a $\nu$-self-concordant barrier if $F$ further satisfies $\nabla F(x)^\top (\nabla^2 F(x))^{-1} \nabla F(x) \leq \nu$ for any $\vx\in Q$.
\end{restatable}
\fi

\begin{thm}[Theorem 2.3.3 from \cite{renegar2001mathematical}]
\label{thm:sc1} If $f$ is a self-concordant barrier, then for all
$\vx$ and $\vy\in\textrm{dom}(f)$, we have $\inprod{\nabla f(\vx)}{\vy-\vx}\leq\nu,$ where
$\nu$ is the self-concordance of $f$. 
\end{thm}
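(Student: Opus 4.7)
The plan is to reduce the inequality to a one-dimensional analysis by restricting $f$ to the line through $\vx$ and $\vy$, then integrating the self-concordance inequality.

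First, I would define $\phi(t)\defeq f(\vx + t(\vy - \vx))$ on the maximal interval $[0, T)$ on which the argument lies in $\textrm{dom}(f)$; since $\vx, \vy \in \textrm{dom}(f)$ and $\textrm{dom}(f)$ is convex, we have $T \geq 1$. Because self-concordance is preserved under affine restriction, $\phi$ is a one-dimensional self-concordant function, and the target becomes $\phi'(0) \leq \nu$.

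Second, I would establish the key inequality $(\phi'(t))^2 \leq \nu\, \phi''(t)$ on $[0,T)$. Writing $\vh \defeq \vy - \vx$ and $H_t \defeq \nabla^2 f(\vx + t\vh)$, Cauchy--Schwarz in the inner product induced by $H_t$ gives
\[
(\phi'(t))^2 = \bigl(\vh^\top H_t \cdot H_t^{-1}\nabla f(\vx + t\vh)\bigr)^2 \leq (\vh^\top H_t \vh)\bigl(\nabla f(\vx+t\vh)^\top H_t^{-1} \nabla f(\vx+t\vh)\bigr) \leq \nu\, \phi''(t),
\]
where the last inequality is exactly the barrier condition in \cref{def:SelfConcordanceBarr}.

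Third, I would split on the sign of $\phi'(0)$. If $\phi'(0) \leq 0$, the conclusion is immediate. Otherwise, convexity of $\phi$ forces $\phi'(t) \geq \phi'(0) > 0$ on $[0,T)$, so $\phi''(t) > 0$ and the key inequality rearranges as $\frac{d}{dt}\bigl(-1/\phi'(t)\bigr) \geq 1/\nu$. Integrating from $0$ to any $t \in (0,T)$ yields
\[
\frac{1}{\phi'(0)} \;\geq\; \frac{t}{\nu} + \frac{1}{\phi'(t)} \;\geq\; \frac{t}{\nu}.
\]

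The delicate step is passing to the limit $t \to T^-$. If $T < \infty$, the barrier property forces $\phi(t) \to \infty$; combined with convexity on a bounded interval this forces $\phi'(t) \to \infty$, so $1/\phi'(t) \to 0$, and the displayed inequality gives $\phi'(0) \leq \nu/T \leq \nu$ since $T \geq 1$. If $T = \infty$, letting $t \to \infty$ makes the right-hand side unbounded, contradicting $\phi'(0) > 0$, so this case cannot arise. In both cases $\langle \nabla f(\vx),\, \vy - \vx\rangle = \phi'(0) \leq \nu$, as required. The main obstacle is justifying the divergence $\phi'(t) \to \infty$ at a finite boundary $T$, which relies on the assumption implicit in ``barrier'' that $f$ explodes at $\partial\,\textrm{dom}(f)$; once that is in hand, the rest of the argument is routine one-dimensional calculus.
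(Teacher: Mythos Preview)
The paper does not prove this statement; it simply quotes it as Theorem 2.3.3 from Renegar's book and uses it as a black box. So there is no in-paper proof to compare against.

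Your argument is correct and is essentially the standard proof (the one Renegar gives): restrict to the line, derive $(\phi')^2 \leq \nu\,\phi''$ from the barrier inequality via Cauchy--Schwarz in the $H_t$-inner product, and integrate. One minor remark: the discussion of $\phi'(t)\to\infty$ as $t\to T^-$ is not actually needed. Once you have $1/\phi'(0) \geq t/\nu + 1/\phi'(t) \geq t/\nu$ for all $t\in(0,T)$ (using only that $\phi'(t)>0$), taking $t\to T^-$ already gives $\phi'(0)\leq \nu/T \leq \nu$ when $T<\infty$, and forces $\phi'(0)\leq 0$ when $T=\infty$, contradicting the case hypothesis. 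So the barrier blow-up and the divergence of $\phi'$ are true but superfluous here. Also, the sentence ``self-concordance is preserved under affine restriction'' is true but never used; you only invoke convexity of $\phi$ and the barrier bound $\nabla f^\top(\nabla^2 f)^{-1}\nabla f\leq\nu$, not the third-derivative estimate.
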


\begin{thm}[Theorem 2.3.4 from \cite{renegar2001mathematical}]
\label{thm:sc2} If $f$ is a $\nu$-self-concordant barrier such that $\vx,\vy\in\textrm{dom}(f)$
satisfy $\inprod{\nabla f(\vx)}{\vy-\vx}\geq0$, then $\vy\in\ball_{\vx}(\vx,4\nu+1).$
\end{thm}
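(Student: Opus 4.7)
The plan is to reduce the bound $\vy \in \ball_{\vx}(\vx, 4\nu + 1)$ to a one-dimensional estimate along the segment from $\vx$ to $\vy$, combining self-concordance (to lower-bound how fast the second derivative can grow) with \cref{thm:sc1} (to upper-bound the first derivative at interior points). I would let $g(t) \defeq f(\vx + t(\vy - \vx))$ for $t \in [0, 1]$. The hypothesis $\inprod{\nabla f(\vx)}{\vy - \vx} \geq 0$ becomes $g'(0) \geq 0$, and convexity of $g$ forces $g'(t) \geq 0$ throughout. Setting $\alpha \defeq \|\vy - \vx\|_{\vx} = \sqrt{g''(0)}$, the goal is to show $\alpha \leq 4\nu + 1$.

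Applying the self-concordance inequality with direction $\vy - \vx$ gives $|g'''(t)| \leq 2 g''(t)^{3/2}$, which is equivalent to $(g''(t))^{-1/2}$ being $1$-Lipschitz in $t$. Integrating from $(g''(0))^{-1/2} = 1/\alpha$ yields the pointwise bound $g''(t) \geq \alpha^2/(1 + \alpha t)^2$ on $[0,1]$. For the complementary upper bound on $g'$, pick any $t^* \in (0, 1)$ and set $\vz \defeq \vx + t^*(\vy - \vx)$, which lies in $\textrm{dom}(f)$ by convexity of the domain. Applying \cref{thm:sc1} to the pair $(\vz, \vy)$ yields $\inprod{\nabla f(\vz)}{\vy - \vz} \leq \nu$, which in the one-dimensional picture becomes $(1 - t^*)\, g'(t^*) \leq \nu$. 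Combining these two bounds via
\[
\frac{\alpha^2 t^*}{1 + \alpha t^*} \;\leq\; \int_0^{t^*} g''(t)\, dt \;=\; g'(t^*) - g'(0) \;\leq\; g'(t^*) \;\leq\; \frac{\nu}{1 - t^*},
\]
where $g'(0) \geq 0$ is used to drop the boundary term, gives the master inequality $\alpha^2 t^*(1 - t^*) \leq \nu(1 + \alpha t^*)$ for every $t^* \in (0, 1)$.

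Plugging in $t^* = 1/2$ collapses this to the quadratic $\alpha^2 - 2\nu\alpha - 4\nu \leq 0$, from which $\alpha \leq \nu + \sqrt{\nu^2 + 4\nu}$; a one-line algebraic check (equivalent to $\sqrt{\nu^2 + 4\nu} \leq 3\nu + 1$, which reduces to $8\nu^2 + 2\nu + 1 \geq 0$) then delivers $\alpha \leq 4\nu + 1$, as desired. The main step to justify with care is the self-concordance integration across $[0,1]$: one needs $g$ to stay in $\textrm{dom}(f)$ at all intermediate points, which is immediate from convexity of the domain, and $g''(t)$ to be finite and strictly positive so that $(g'')^{-1/2}$ is well defined and Lipschitz --- the former is guaranteed because $\vy \in \textrm{dom}(f)$ forces $g''(1) < \infty$, and the latter is the standard non-degeneracy of a self-concordant barrier's Hessian. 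Once these continuity points are settled, the remainder is a short one-dimensional calculation.
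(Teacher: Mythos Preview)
The paper does not prove this statement at all --- it is quoted verbatim as a background result from Renegar's monograph, with no accompanying argument. Your proof is correct and is essentially the standard one found in that reference: restrict to the segment, use the one-dimensional self-concordance inequality to lower-bound $g''$, invoke \cref{thm:sc1} at an interior point to upper-bound $g'$, and close with an elementary quadratic estimate. There is nothing further to compare.
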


We now state the following result from self-concordance calculus. 
\begin{thm}[Theorem 3.3.1 of \cite{renegar2001mathematical}]
\label{thm:ConjugateSC} If $f$ is a (strongly nondegenerate) self-concordant
function, then so is its Fenchel conjugate $f^{\ast}.$ 
\end{thm}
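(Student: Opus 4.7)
The plan is to transfer the defining third-order inequality from $f$ to $f^*$ using the duality between gradients and Hessians of conjugate pairs (\cref{lem:GradConjugate} and \cref{lem:HessianConjugate}). I would fix $y \in \mathrm{dom}(f^*)$ and set $x \defeq \nabla f^*(y)$, so that $y = \nabla f(x)$ by \cref{lem:GradConjugate}. For an arbitrary direction $h_y$, I would introduce its ``primal companion'' direction $h_x \defeq \nabla^2 f^*(y) \, h_y = (\nabla^2 f(x))^{-1} h_y$, which is well-defined because strong nondegeneracy forces $\nabla^2 f(x) \succ 0$ and thus makes the Hessian inverse in \cref{lem:HessianConjugate} legitimate.

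The heart of the argument is to establish the pair of identities
\[
D^2 f^*(y)[h_y, h_y] = D^2 f(x)[h_x, h_x], \qquad D^3 f^*(y)[h_y, h_y, h_y] = -D^3 f(x)[h_x, h_x, h_x].
\]
The first is an algebraic manipulation: $h_y^\top \nabla^2 f^*(y) h_y = h_y^\top (\nabla^2 f(x))^{-1} h_y$, and rewriting $h_y = \nabla^2 f(x) \, h_x$ converts this to $h_x^\top \nabla^2 f(x) h_x$. For the second, I would differentiate the matrix identity $\nabla^2 f^*(y) = (\nabla^2 f(\nabla f^*(y)))^{-1}$ in the direction $h_y$, using the standard formula $\frac{d}{dt}(A(t)^{-1}) = -A(t)^{-1} A'(t) A(t)^{-1}$ together with the chain rule and the fact that $D(\nabla f^*)(y)[h_y] = \nabla^2 f^*(y) \, h_y = h_x$. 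This produces
\[
D(\nabla^2 f^*)(y)[h_y] = -(\nabla^2 f(x))^{-1} \, D^3 f(x)[h_x, \cdot, \cdot] \, (\nabla^2 f(x))^{-1}.
\]
Pairing both sides with $h_y \otimes h_y$ collapses each flanking $(\nabla^2 f(x))^{-1} h_y$ into $h_x$, yielding the claimed sign-reversed identity.

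Combining the two identities with the self-concordance of $f$ gives
\[
|D^3 f^*(y)[h_y, h_y, h_y]| = |D^3 f(x)[h_x, h_x, h_x]| \leq 2 \, (D^2 f(x)[h_x, h_x])^{3/2} = 2 \, (D^2 f^*(y)[h_y, h_y])^{3/2},
\]
which is exactly the self-concordance condition for $f^*$ in \cref{def:SelfConcordanceBarr}. The main obstacle will be bookkeeping in the third-derivative computation: I must apply the matrix-inverse derivative formula correctly, account for the chain rule contribution through $\nabla f^*$, and verify that both outer Hessian-inverse factors implement the $h_y \mapsto h_x$ change of variables after pairing with $h_y$, with only a single overall sign. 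Once this is handled, the deduction of self-concordance of $f^*$ from that of $f$ is purely symbolic.
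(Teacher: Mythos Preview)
Your proposal is correct: the two displayed identities relating the second and third derivatives of $f^*$ to those of $f$ via the change of direction $h_x = (\nabla^2 f(x))^{-1} h_y$ are exactly the standard mechanism, and the computation you outline (matrix-inverse derivative plus chain rule through $\nabla f^*$) goes through cleanly to give the sign-reversed third-derivative identity and hence the self-concordance inequality for $f^*$.

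The paper, however, does not prove this statement at all. It is quoted verbatim as Theorem~3.3.1 of Renegar's monograph and used as a black box (specifically in the proof of \cref{lem:EtChange} to assert that $\barr^*$ is self-concordant). So rather than taking a different route, you have supplied a full proof where the paper simply cites the literature. Your argument is essentially the one found in Renegar's book, so nothing is lost by relying on the citation, but your write-up is self-contained and would stand on its own.
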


The following result gives a bound on the quadratic approximation of a function, 
with the distance between two points measured in the local norm. The convergence
of Newton's method can be essentially explained by this result. 
\begin{thm}[Theorem 2.2.2 of \cite{renegar2001mathematical}]
\label{thm:QuadApproxErr} If $f$ is a self-concordant function,
$\vx\in\textrm{dom}(f)$, and $\vy\in\ball_{\vx}(\vx,1)$, then 
\[
f(\vy)\leq f(\vx)+\inprod{\nabla f(\vx)}{\vy-\vx}+\frac{1}{2}\|\vy-\vx\|_{\vx}^{2}+\frac{\|\vy-\vx\|_{\vx}^{3}}{3(1-\|\vy-\vx\|_{\vx})},
\]
where $\|\vy-\vx\|_{\vx}^{2}\defeq\inprod{\vy-\vx}{\nabla^{2}f(\vx)\cdot(\vy-\vx)}$.
\end{thm}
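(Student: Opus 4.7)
The plan is to reduce to the one-dimensional case and then exploit self-concordance as a differential inequality for the second derivative along a line. Fix $\vh = \vy - \vx$ and define $\phi(t) = f(\vx + t\vh)$ on $t\in[0,1]$. Then $\phi(0)=f(\vx)$, $\phi'(0)=\inprod{\nabla f(\vx)}{\vh}$, $\phi''(0)=\|\vh\|_{\vx}^{2}$, and $\phi'''(t) = D^{3}f(\vx+t\vh)[\vh,\vh,\vh]$. The self-concordance condition from \cref{def:SelfConcordanceBarr} specialized along this line reads $|\phi'''(t)|\le 2(\phi''(t))^{3/2}$ for all $t$ such that $\vx+t\vh\in\mathrm{dom}(f)$. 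Writing $r \defeq \|\vh\|_{\vx}<1$ by the assumption $\vy\in\ball_{\vx}(\vx,1)$, the goal is to control $\phi''(t)$ on $[0,1]$ and then invoke the exact Taylor identity
\[
\phi(1) = \phi(0) + \phi'(0) + \int_{0}^{1}(1-t)\,\phi''(t)\,dt.
\]

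The next step is to convert the self-concordance inequality into a Lipschitz condition. Define $g(t) \defeq (\phi''(t))^{-1/2}$, which is well-defined since $\phi''>0$ by strict convexity. Differentiating gives $g'(t) = -\tfrac{1}{2}\phi'''(t)(\phi''(t))^{-3/2}$, so the self-concordance bound translates to $|g'(t)|\le 1$. Integrating yields $g(t)\ge g(0) - t$, i.e. $(\phi''(t))^{-1/2}\ge r^{-1} - t$, which rearranges to
\[
\phi''(t) \;\le\; \frac{r^{2}}{(1-tr)^{2}}\qquad\text{for all }t\in[0,1].
\]
This is the central estimate; the hypothesis $r<1$ ensures the denominator stays positive throughout $[0,1]$, so the estimate is valid on the whole segment.

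With this bound in hand, I would plug it into the Taylor identity and evaluate the resulting integral in closed form. Using the substitution $s=1-tr$, a direct computation gives
\[
\int_{0}^{1}(1-t)\,\frac{r^{2}}{(1-tr)^{2}}\,dt \;=\; -r - \ln(1-r).
\]
It remains to compare this against the target $\tfrac{1}{2}r^{2} + \tfrac{r^{3}}{3(1-r)}$. Expanding as power series, $-r-\ln(1-r) = \sum_{k\ge 2} r^{k}/k = \tfrac{1}{2}r^{2} + \sum_{k\ge 3} r^{k}/k$, while $\tfrac{r^{3}}{3(1-r)} = \sum_{k\ge 3} r^{k}/3$. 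Since $1/k \le 1/3$ for $k\ge 3$, every tail term on the left is dominated by the corresponding term on the right, proving
\[
-r - \ln(1-r) \;\le\; \tfrac{1}{2}r^{2} + \tfrac{r^{3}}{3(1-r)}.
\]
Combining this with the Taylor identity yields the stated inequality.

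The main technical obstacle is the passage from the self-concordance cubic bound to the quadratic-form control $\phi''(t)\le r^{2}/(1-tr)^{2}$; once the substitution $g=(\phi'')^{-1/2}$ is made, what looks like a nonlinear third-order condition linearizes into a Lipschitz bound, and everything else is integration and a term-by-term power-series comparison. A minor care point is ensuring that the line segment $[\vx,\vy]$ stays inside $\mathrm{dom}(f)$, which follows from $r<1$ and the fact that the Dikin ellipsoid $\ball_{\vx}(\vx,1)$ lies in $\mathrm{dom}(f)$ for a self-concordant $f$.
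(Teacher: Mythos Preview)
Your argument is correct. Note, however, that the paper does not supply its own proof of this statement: it is quoted verbatim as Theorem~2.2.2 of \cite{renegar2001mathematical} in the preliminaries section and used as a black box. So there is no ``paper's proof'' to compare against; what you have written is essentially the standard proof that appears in Renegar's text --- reduce to the line, use the substitution $g=(\phi'')^{-1/2}$ to turn self-concordance into $|g'|\le 1$, integrate to get $\phi''(t)\le r^2/(1-tr)^2$, plug into the integral Taylor remainder, and dominate $-r-\ln(1-r)$ by $\tfrac12 r^2 + \tfrac{r^3}{3(1-r)}$ via power series. One small remark: your claim ``$\phi''>0$ by strict convexity'' is slightly imprecise, since a self-concordant function need not be strictly convex in every direction; what you actually use (and what is sufficient) is that $\phi''(0)=r^2>0$ together with the Lipschitz bound on $g$ forces $g(t)\ge r^{-1}-t>0$ on $[0,1]$, so $\phi''$ cannot vanish there. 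With that tweak the proof is complete.
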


Finally, we need the following definitions of entropic barrier and universal barrier. 

\ifdefined\isneurips
\defEntropicBarr*

\defUniBarrier*
\else

\begin{restatable}[\cite{bubeck2015entropic, chewi2021entropic}]{defn}{defEntropicBarr}
\label[defn]{def:EntropicBarr} Given a convex body $\kcal\subseteq\R^{n}$ and some fixed $\theta\in\R^{n}$,  define
the function $f(\theta)=\log\left[\int_{\vx\in\kcal}\exp\inprod{\vx}{\theta}d\vx\right]$.  Then the Fenchel conjugate $f^{\ast}:\textrm{int}(\kcal)\rightarrow\R$ is a self-concordant barrier termed the \emph{entropic barrier}. The entropic barrier is $n$-self-concordant. 
\end{restatable}

\begin{restatable}[\cite{DBLP:books/daglib/0071613,lee2021universal}]{defn}{defUniBarrier}
\label[defn]{def:UniversalBarr} Given a convex body $\kcal\subseteq\R^{n}$, the \emph{universal barrier} of $\kcal$ is defined as $\psi:\operatorname{int}(\kcal)\to \R$ by 
\[
\psi(\vx) = \log\vol((\kcal-\vx)^\circ)
\]
where $(\kcal-\vx)^\circ=\{\vy\in \R^n:\vy^\top(\vz-\vx)\leq 1,\forall \vz\in \kcal\}$ is the polar set of $\kcal$ with respect to $\vx$. The universal barrier is $n$-self-concordant.
\end{restatable}

\fi 

\subsection{Facts from convex geometry}

Since our analysis is contingent on the change in the volume of convex
bodies when points are added to them or when they are intersected
with halfspaces, we invoke the classical result by Gr\"unbaum several
times. 
We therefore state its relevant variants next: 
\cref{thm:GrunbaumGeneral} applies to
log-concave distributions, and \cref{cor:GrunbaumVolApprox} is its
specific case, since the indicator function of a convex set is a log-concave
function \cite{boyd2004convex}. 

\ifdefined\isneurips
\thmGrunBaumMain*
\else 

\begin{restatable}[\cite{grunbaum1960partitions, bubeck2020chasing}]{thm}{thmGrunBaumMain}
\label{thm:GrunbaumGeneral} Let $f$ be a log-concave distribution
on $\R^{d}$ with centroid $\vc_{f}.$ Let $\mathcal{H}=\left\{ \vu\in\R^{d}:\inprod{\vu}{\vv}\geq q\right\} $
be a halfspace defined by a normal vector $\vv\in\R^{d}$. Then, $
\int_{\mathcal{H}}f(\vz)d\vz\geq\frac{1}{e}-t^{+},$ 
where $t=\frac{q-\inprod{\vc_{f}}{\vv}}{\sqrt{\mathbb{E}_{\vy\sim f}\inprod{\vv}{\vy-\vc_{f}}^{2}}}$
is the distance of the centroid to the halfspace scaled by the standard
deviation along the normal vector $\vv$ and $t^+ \defeq \max\{0, t\}$.  
\end{restatable}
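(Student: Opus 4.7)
My plan is to reduce the inequality to a one-dimensional statement about log-concave densities, handle the sign of $t$ case-by-case, and deal with the linear correction $t^+$ via a sup-norm bound on log-concave densities.

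\textbf{Reduction to one dimension.}
Define $X \defeq \inprod{\vv}{\vy - \vc_f}$ for $\vy \sim f$. By the Pr\'ekopa--Leindler inequality, the density $g$ of $X$ is log-concave on $\R$, with $\E[X] = 0$ by the definition of centroid and $\sigma^2 \defeq \mathrm{Var}(X) = \E_{\vy \sim f}\inprod{\vv}{\vy - \vc_f}^2$. Rescaling $\tilde X \defeq X/\sigma$ produces a log-concave density $\tilde g(s) = \sigma\, g(\sigma s)$ on $\R$ with mean $0$ and variance $1$, and the quantity of interest becomes $\int_{\mathcal H} f(\vz)\,d\vz = \Pr[\tilde X \geq t]$. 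So it suffices to prove $\Pr[\tilde X \geq t] \geq 1/e - t^+$ for an arbitrary log-concave density $\tilde g$ on $\R$ with mean $0$ and variance $1$.

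\textbf{Case analysis.}
For $t \leq 0$, the target reduces to $\Pr[\tilde X \geq t] \geq \Pr[\tilde X \geq 0] \geq 1/e$, which is the classical one-dimensional Gr\"unbaum inequality for log-concave densities applied at the mean (extremal at the one-sided exponential). For $t \geq 1/e$, the right-hand side is non-positive and the inequality is vacuous. In the remaining interesting regime $0 < t < 1/e$, I would decompose
\[
\Pr[\tilde X \geq t] \;=\; \Pr[\tilde X \geq 0] \;-\; \int_0^t \tilde g(s)\,ds \;\geq\; \tfrac{1}{e} \;-\; t\cdot\|\tilde g\|_{\infty},
\]
reducing the problem to the pointwise density bound $\|\tilde g\|_\infty \leq 1$.

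\textbf{Main obstacle.}
The nontrivial ingredient is the sup-norm estimate $\|\tilde g\|_\infty \leq 1$ for a log-concave probability density on $\R$ with mean $0$ and variance $1$ (equivalently, $\|g\|_\infty \leq 1/\sigma$ before rescaling). The extremal case is the one-sided exponential $\tilde g(s) = e^{-(s+1)}$ on $[-1,\infty)$, which attains $\|\tilde g\|_\infty = 1$ at $s = -1$ and, correspondingly, saturates the 1D Gr\"unbaum inequality at $t = 0$. I would prove this bound by a standard variational argument: among log-concave densities with prescribed first two moments, $\|\tilde g\|_\infty$ is maximized by the log-linear family $\tilde g(s) \propto e^{-\alpha s}$ supported on a half-line (via an exponential tilt/truncation replacement that preserves the mean and does not decrease the variance while increasing the maximum), for which a direct parametric computation confirms the bound with equality at the one-sided exponential. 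Plugging this into the decomposition above closes the main case and completes the proof.
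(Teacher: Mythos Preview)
The paper does not supply its own proof of this statement; it is quoted as a background fact with citations to \cite{grunbaum1960partitions, bubeck2020chasing}, so there is no in-paper argument to compare against. Your outline is essentially the standard route (and matches the approach in the second citation): marginalize onto the direction $\vv$ (log-concavity of the marginal via Pr\'ekopa--Leindler), normalize to a mean-zero unit-variance log-concave density $\tilde g$ on $\R$, invoke the one-dimensional log-concave Gr\"unbaum bound $\Pr[\tilde X\geq 0]\geq 1/e$ for $t\leq 0$, and for $0<t<1/e$ control the overshoot by $\int_0^t \tilde g \leq t\,\|\tilde g\|_\infty$.

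The only step that is not yet a proof is the bound $\|\tilde g\|_\infty\leq 1$. The claim is correct, and the one-sided exponential is indeed the extremizer, but your variational sketch (``exponential tilt/truncation that preserves the mean and does not decrease the variance while increasing the maximum'') is not a complete argument as written: arranging a replacement that simultaneously fixes the mean, weakly increases the variance, and weakly increases the sup-norm is exactly the delicate part, and a single tilt/truncation typically moves all three quantities at once. A cleaner formulation is the equivalent moment inequality: for any log-concave density $h$ on $\R$ with $\max h=M$, one has $\mathrm{Var}(h)\leq 1/M^2$. This can be proved by anchoring at the mode and comparing each one-sided tail of $h$ to the exponential density with the same one-sided mass and the same value at the mode (log-concavity forces the second moment of each tail to be dominated by the exponential case); summing the two sides and using $\mathrm{Var}\leq \E[(X-\text{mode})^2]$ gives the bound with equality for the one-sided exponential. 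With that lemma in hand, your decomposition closes the proof.
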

\fi 

\begin{rem}\label[rem]{rem:GrunbaumSimpleCase}
A crucial special case of \cref{thm:GrunbaumGeneral} is that cutting a convex set through its centroid yields two parts, the smaller of which has volume at least $1/e$ times the original volume and the larger of which is at most $1-1/e$ times the original total volume. 
\end{rem}

\begin{cor}[\cite{grunbaum1960partitions}]
\label[cor]{cor:GrunbaumVolApprox} Let $\kcal$ be a convex set with centroid
$\mu$ and covariance matrix $\Sigma.$ Then, for any point $\vx$
satisfying $\|\vx-\mu\|_{\Sigma^{-1}}\leq\eta$ and a halfspace $\h$
such that $\vx\in\h$, we have $\vol(\kcal\cap\h)\geq\vol(\kcal)\cdot(1/e-\eta).$
\end{cor}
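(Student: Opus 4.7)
The plan is to deduce this corollary directly from \cref{thm:GrunbaumGeneral} by specializing the log-concave distribution $f$ to the (normalized) indicator function of $\kcal$, so that $\int_{\h} f(\vz)\,d\vz = \vol(\kcal\cap\h)/\vol(\kcal)$. With this choice, the centroid $\vc_f$ of $f$ coincides with the centroid $\mu$ of $\kcal$, and for any direction $\vv\in\R^{d}$ we have $\E_{\vy\sim f}\inprod{\vv}{\vy-\mu}^{2} = \vv^{\top}\Sigma\vv$. It then suffices to upper bound the normalized distance $t$ appearing in the statement of \cref{thm:GrunbaumGeneral} by $\eta$.

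Write the halfspace as $\h=\{\vu:\inprod{\vu}{\vv}\geq q\}$ for some unit normal $\vv$ and scalar $q$. Since $\vx\in\h$, we have $q\leq\inprod{\vx}{\vv}$, so
\[
q-\inprod{\mu}{\vv}\;\leq\;\inprod{\vx-\mu}{\vv}.
\]
The main (and essentially the only) calculation is a Cauchy--Schwarz inequality in the Mahalanobis geometry: factoring $\vv=\Sigma^{-1/2}(\Sigma^{1/2}\vv)$ and $\vx-\mu=\Sigma^{1/2}(\Sigma^{-1/2}(\vx-\mu))$,
\[
\inprod{\vx-\mu}{\vv}\;=\;\inprod{\Sigma^{-1/2}(\vx-\mu)}{\Sigma^{1/2}\vv}\;\leq\;\|\vx-\mu\|_{\Sigma^{-1}}\cdot\sqrt{\vv^{\top}\Sigma\vv}.
\]
Dividing through by $\sqrt{\vv^{\top}\Sigma\vv}$ gives $t\leq\|\vx-\mu\|_{\Sigma^{-1}}\leq\eta$, hence $t^{+}\leq\eta$.

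Plugging this bound into \cref{thm:GrunbaumGeneral} yields
\[
\frac{\vol(\kcal\cap\h)}{\vol(\kcal)}\;=\;\int_{\h}f(\vz)\,d\vz\;\geq\;\frac{1}{e}-t^{+}\;\geq\;\frac{1}{e}-\eta,
\]
which is the claimed inequality after multiplying through by $\vol(\kcal)$. The only subtle point is the Cauchy--Schwarz reduction to the Mahalanobis norm, and verifying that the uniform distribution on $\kcal$ is log-concave so that \cref{thm:GrunbaumGeneral} applies; both are standard, so I expect no real obstacle here.
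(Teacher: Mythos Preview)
Your proposal is correct and follows exactly the route the paper indicates: the paper does not spell out a proof but explicitly notes that \cref{cor:GrunbaumVolApprox} is the special case of \cref{thm:GrunbaumGeneral} obtained by taking $f$ to be the (log-concave) indicator of $\kcal$. Your Cauchy--Schwarz bound on $t$ via the Mahalanobis norm is precisely the missing detail, and it goes through without issue.
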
 

Finally, we need the following facts. 
\begin{fact}[Volumes of standard objects]\label[fact]{fact:volSphereCone}
The volume of a $q$-dimensional Euclidean ball is given by $\vol(\ball_{q}(0, \bar R))= \frac{\pi^{q/2}}{\Gamma(1+ q/2)} {\bar R}^q$,  and the $\textrm{volume of a $q$-dimensional cone} = \frac{1}{q+1}\cdot \textrm{volume of base} \cdot \textrm{height}.$
\end{fact}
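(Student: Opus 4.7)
The plan is to verify two classical integral-geometry identities by direct computation, treating them as two independent arguments.

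For the ball volume, I would use the Gaussian integral trick. Compute $\int_{\R^q} e^{-\|\vx\|^2}\,d\vx$ in two ways: first, as a product of $q$ one-dimensional Gaussian integrals, yielding $\pi^{q/2}$; second, in polar (spherical) coordinates as $\sigma_{q-1}(1)\int_0^\infty r^{q-1} e^{-r^2}\,dr$, where $\sigma_{q-1}(1)$ is the surface area of the unit $(q-1)$-sphere. The radial integral reduces via $u=r^2$ to $\tfrac12\Gamma(q/2)$, so equating the two expressions gives $\sigma_{q-1}(1) = 2\pi^{q/2}/\Gamma(q/2)$. Then the ball's volume is obtained by integrating this spherical measure over radii in $[0,\bar R]$:
\[
\vol(\ball_q(0,\bar R)) \;=\; \int_0^{\bar R}\sigma_{q-1}(1)\, r^{q-1}\,dr \;=\; \frac{\sigma_{q-1}(1)\,\bar R^{q}}{q} \;=\; \frac{\pi^{q/2}\,\bar R^{q}}{\Gamma(1+q/2)},
\]
using $\Gamma(1+q/2)=(q/2)\Gamma(q/2)$. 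An alternative (essentially equivalent) route is induction on $q$, integrating $(q-1)$-dimensional ball volumes along one axis and invoking a beta-function identity.

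For the cone volume, I would apply Cavalieri's principle, slicing perpendicular to the axis joining the apex to the base. Let $h$ be the height and let the base be a $q$-dimensional region of $q$-volume $V_{\textrm{base}}$. At height $t\in[0,h]$ measured from the apex, the cross-section is a scaled copy of the base by linear factor $t/h$, so its $q$-dimensional volume is $(t/h)^{q}V_{\textrm{base}}$. Integrating,
\[
\int_0^{h}\left(\tfrac{t}{h}\right)^{q} V_{\textrm{base}}\,dt \;=\; \frac{V_{\textrm{base}}\,h}{q+1},
\]
matching the claim.

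Both statements are textbook, so there is no substantive obstacle. The only point requiring care is the dimensional convention: in the cone formula the coefficient $1/(q+1)$ is consistent with $q$ being the dimension of the base (so the cone itself is $(q+1)$-dimensional), and this is what the slicing computation above assumes.
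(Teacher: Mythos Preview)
Your derivations are correct and standard. The paper does not actually prove this statement at all; it is stated as a \emph{Fact} and used without justification (invoked later in \cref{lem:potential-change}). So there is no ``paper's own proof'' to compare against --- your write-up simply supplies the textbook arguments that the authors assumed.

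One remark worth keeping: your closing note about the dimensional convention in the cone formula is on point. As stated, the coefficient $\tfrac{1}{q+1}$ only matches your Cavalieri computation if $q$ is the dimension of the \emph{base} (so the cone is $(q{+}1)$-dimensional). Interestingly, when the paper actually applies this fact in the proof of \cref{lem:potential-change}, it uses a coefficient $\tfrac{1}{m-\operatorname{rank}(\ma)}$ for a cone living in a space of dimension $m-\operatorname{rank}(\ma)$ with base of dimension $m-\operatorname{rank}(\ma)-1$; that is, the paper itself applies the formula with the coefficient equal to one over the ambient cone dimension, which is consistent with your computation but slightly at odds with the ``$q$-dimensional cone $\Rightarrow \tfrac{1}{q+1}$'' phrasing of the Fact. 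Your caveat is therefore well placed.
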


\section{Our algorithm \label[sec]{sec:AlgOverview}}

We begin by reducing \cref{prob:OurProblemStatement} to the following slightly stronger formulation (see \cref{thm:mainResult} for the detailed reduction):

\[
\begin{array}{ll}
    \mbox{minimize} & \inprod{\vc}{\vx},  \\
     \mbox{subject to} & \vxi\in\ki\subseteq \R^{d_i + 1} \;\forall i\in[n]\\
     &  \ma\vx=\vb.
\end{array}\numberthis \label[prob]{eq:1main}
\]
where $\vx$ is a concatenation of vectors $\vx_i$'s, and the $\ki$'s are disjoint convex sets.
This formulation decouples the overlapping support of the original $f_i$'s by introducing additional variables tied together through the linear system $\ma\vx=\vb$. 
Each $\ki$ is constructed by applying a standard epigraph trick to the function $f_i$.

Note that we do not have a closed-form expression for $\ki$. 
Instead, the subgradient oracle for $f_i$ translates to a \emph{separation oracle} for $\ki$: 
on a point $\vzi$ queried by the oracle, the oracle either asserts $\vzi\in\ki$, or returns a separating
hyperplane that separates $\vzi$ from $\ki$. 
 
 At the start of our algorithm, we have the following guarantee:
\begin{lem}\label[lemma]{assumption:alg-assumption} At the start of our algorithm, we are guaranteed the existence of the following. 
\begin{enumerate}[wide]
\item Explicit convex sets $\kin\defeq \kcal_{\textrm{in,}1}\times \kcal_{\textrm{in,}2} \times \cdots \times \kcal_{\textrm{in,}n}$ and $\kout\defeq \kcal_{\textrm{out,}1}\times \kcal_{\textrm{out,}2} \times \cdots \times \kcal_{\textrm{out,}n}$ 
such that $\kin\subseteq \kcal \defeq \kcal_1 \times \cdots \times \kcal_n \subseteq \kout$, 
\item An 
initial $\vx_{\textrm{initial}}\in \kin$  such that $\ma\vx_{\textrm{initial}} = \vb$.
\end{enumerate}
\end{lem}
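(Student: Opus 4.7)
The plan is to exhibit explicit $\kin$, $\kout$, and $\vx_{\textrm{initial}}$ starting from the data of \cref{prob:OurProblemStatement}, via the epigraph reduction sketched above \cref{eq:1main}. First, I would fix the coordinates $S_i \subseteq [d]$, $|S_i| = d_i$, on which $f_i$ actually depends, and introduce local variables $\vxi = (\vthetai, z_i) \in \R^{d_i + 1}$ where $\vthetai$ is a private copy of $\vtheta_{S_i}$ and $z_i$ is an epigraph slack. Set
\[
\kcal_i \;\defeq\; \bigl\{(\vthetai, z_i) : z_i \geq f_i(\vthetai),\; \|\vthetai - \vtheta^{(0)}_{S_i}\|_2 \leq 2R,\; z_i \leq f_i(\vtheta^{(0)}_{S_i}) + 3LR \bigr\},
\]
take $\vc$ to be the indicator of the $z_i$-coordinates so that $\inprod{\vc}{\vx} = \sum_i z_i$, and let $\ma\vx = \vb$ encode the equality constraints $\vx_{i,j} = \vx_{i',j}$ whenever coordinate $j \in S_i \cap S_{i'}$. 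Any point $\vtheta$ with $\|\vtheta - \vtheta^{(0)}\|_2 \leq R$ gives a feasible $\vx$ with matching objective $\sum_i f_i(\vtheta)$ (the radius and level bounds on $\kcal_i$ are slack by a factor of two, thanks to the $L$-Lipschitz property of $f_i$), and conversely every $\vx$ feasible for \cref{eq:1main} projects to a feasible $\vtheta$ with the same objective. This justifies the reduction and shows each $\kcal_i$ is a bounded, closed, convex set accessible through the separation oracle induced by the sub-gradient oracle of $f_i$.

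Next, I would build the outer approximation by ignoring $f_i$ entirely: let
\[
\kout_i \;\defeq\; \bigl\{(\vthetai, z_i) : \|\vthetai - \vtheta^{(0)}_{S_i}\|_2 \leq 2R,\; -\,(f_i(\vtheta^{(0)}_{S_i}) + 3LR) \leq z_i \leq f_i(\vtheta^{(0)}_{S_i}) + 3LR \bigr\},
\]
which is a box (or cylinder) with an explicit description, and clearly contains $\kcal_i$. Setting $\kout = \kout_1 \times \cdots \times \kout_n$ gives $\kcal \subseteq \kout$ blockwise. For the inner approximation, I would exploit the fact that $\kcal_i$ is an epigraph and therefore contains a whole ball once one pushes $z_i$ sufficiently high. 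Concretely, pick $M_i \defeq f_i(\vtheta^{(0)}_{S_i}) + 2LR$ and a small radius $r$ (say $r = LR/10$); by $L$-Lipschitzness, for every $\vthetai$ in the Euclidean ball of radius $r$ around $\vtheta^{(0)}_{S_i}$ one has $f_i(\vthetai) \leq f_i(\vtheta^{(0)}_{S_i}) + Lr \leq M_i - r$, so the Euclidean ball $\ball_i \defeq \{\vxi : \|\vxi - (\vtheta^{(0)}_{S_i}, M_i)\|_2 \leq r\}$ lies entirely in $\kcal_i$. Take $\kin_i \defeq \ball_i$ and $\kin = \kin_1 \times \cdots \times \kin_n$; then $\kin \subseteq \kcal \subseteq \kout$ holds by construction.

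Finally, for the initial iterate, set $\vx_{\textrm{initial},i} \defeq (\vtheta^{(0)}_{S_i}, M_i)$, the center of the ball $\ball_i$. This is in $\kin_i$ trivially, and since every private copy $\vthetai$ is initialized to the same restriction of $\vtheta^{(0)}$, the equality constraints $\vx_{i,j} = \vx_{i',j}$ for $j \in S_i \cap S_{i'}$ all hold; thus $\ma \vx_{\textrm{initial}} = \vb$.

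The only nontrivial step is to evaluate each $f_i(\vtheta^{(0)}_{S_i})$ in order to define the level $M_i$ and the upper $z_i$-bound; this uses $n$ sub-gradient (in fact value) oracle calls up front, which is absorbed into the final $O(m\log(m/\epsilon))$ query count. The main care point I would be watchful about is choosing the outer box large enough that $\kcal \subseteq \kout$ even after the $2R$-radius relaxation, while keeping the inner ball nontrivially large so that volume-ratio arguments later in the algorithm start from a reasonable $\vol(\kin)/\vol(\kout)$; the Lipschitz slack of $LR$ built into $M_i$ and the outer level is exactly what makes both bounds compatible.
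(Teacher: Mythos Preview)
Your approach is essentially correct and takes a genuinely different, more elementary route than the paper. You exploit the explicit epigraph structure of each $\kcal_i$ coming from the reduction of \cref{prob:OurProblemStatement}, together with the $L$-Lipschitz bound on $f_i$, to write down $\kini$, $\kouti$, and the initial point by hand; because every local copy $\vthetai$ is initialized from the \emph{same} global $\vtheta^{(0)}$, the coupling constraints $\ma\vx=\vb$ hold automatically. The paper instead proves the lemma at the level of the general formulation \cref{eq:1main}, where each $\kcal_i$ is an arbitrary convex body accessible only through a separation oracle: it runs a centroid-based cutting-plane subroutine (\cref{alg:InnerBallFinding}, \cref{thm:initOne}) to locate an inner ball inside each $\kcal_i$ at a cost of $O(d_i\log(R/r))$ oracle calls, and since the ball centers so obtained need not satisfy $\ma\vx=\vb$, it then lifts to an augmented program with nonnegative slacks $\vx^{(2)},\vx^{(3)}$ and a large penalty (\cref{def:ModConvProg}, \cref{thm:initial-main}) to manufacture a feasible start. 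Your construction is simpler and already suffices for the application to \cref{prob:OurProblemStatement}; the paper's construction buys generality and is what lets \cref{thm:MainThmOfKiProblem} stand on its own for arbitrary $\kcal_i$.

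Two small fixes are needed in your parameters. The inner-ball radius $r=LR/10$ only respects the constraint $\|\vthetai-\vtheta^{(0)}_{S_i}\|_2\leq 2R$ when $L\leq 20$; take instead $r$ proportional to $\min(R,LR)$. And the lower bound $-(f_i(\vtheta^{(0)}_{S_i})+3LR)$ on $z_i$ in your $\kouti$ can lie \emph{above} $\min_{\kcal_i} z_i$ when $f_i(\vtheta^{(0)}_{S_i})$ is negative, so $\kcal_i\subseteq\kouti$ would fail; the intended bound is $f_i(\vtheta^{(0)}_{S_i})-2LR$ (or anything smaller), which is exactly what Lipschitzness over the $2R$-ball gives.
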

 
We show how to construct such a set $\kin$ in \cref{sec:initial-ki} and how to find such a $\kout$ and $\vx_{\textrm{initial}}$ in \cref{sec:initial-point-reduction}.
  
\subsection{Details of our algorithm}
In this section, we explain our main algorithm (\cref{alg:decomposable-fn-min}). 

The inputs to \cref{alg:decomposable-fn-min} are: 
initial sets $\kin$ and $\kout$ satisfying $\kin\subseteq\kcal\subseteq\kout$, 
an initial point $\vx\in \kin$ satisfying $\ma\vx = \vb$, a 
separation oracle $\oi$ for each $\ki$, the objective vector $\vc$, 
and scalar parameters $m$, $n$, $R$, $r$, and $\epsilon$. 
All the parameters are set in the proof of \cref{thm:MainThmOfKiProblem}.

Throughout the algorithm, we maintain a central path parameter $t$ for IPM-inspired updates, the current solution $\vx$,
and convex sets $\kini$ and $\kouti$ satisfying $\kini\subseteq \ki\subseteq\kouti$ for each $i\in [n]$. To run IPM-style updates, we choose the entropic barrier on $\kout$ and the universal barrier on $\kin$.

Given the current set $\kout$, the current $t$, and the entropic barrier $\barrout$ defined on $\kouthat\defeq \kout\cap\left\{\vu:\ma \vu = \vb\right\}$, we define the point 
\begin{equation}
\vxos\defeq\arg\min_{\vx\in \kouthat}\left\{ t\inprod{\vc}{\vx}+\barrout(\vx)\right\}. \label{eq:xoutstar-min}
\end{equation} 
Per the IPM paradigm, for the current value of $t$, this point serves as a target to ``chase'' when optimizing $\inprod{\vc}{\vx}$ over the set $\kouthat$. 
Although our overall goal in \cref{eq:1main} is to optimize over $\kcal\cap \left\{\vu: \ma\vu=\vb\right\}$, 
we do not know $\kcal$ explicitly and therefore must use its known proxies, $\kin$ or $\kout$; we choose $\kout$ because $\kout\supseteq\kcal$ ensures we do not miss a potential optimal point. 

Having computed the current target $\vxos$, we move the current solution $\vx$ towards it by taking a Newton step, provided certain conditions of feasibility and minimum progress are satisfied. 
If either one of these conditions is violated, we update either $\kin$, $\kout$, or the parameter $t$. 

\begin{algorithm}
\caption{Minimizing Decomposable Convex Function}
\label{alg:decomposable-fn-min}

\begin{algorithmic}[1]

\algnewcommand{\LeftComment}[1]{\State \(\triangleright\) #1} 

\LeftComment{ Solving \cref{eq:1main} } 
\State \textbf{Input.} $\epsilon$, $\ma$, $\vb$, $\vc$, $R$, $r$, $m$
, $n$, $\vx$, $\kin$, $\kout$, and $\oi$ for each $i\in [n]$. 
\State \textbf{Initialization.} $t =  \frac{m\log m}{\sqrt{n} \|\vc\|_2 R}$ and $\tend=\frac{8m}{\epsilon\|\vc\|_{2}R}$, $\eta =\frac{1}{100}$, and $\vxos$ (via \cref{eq:xoutstar-min})

\While{true} \label{line:ForLoop}

	\If{$\inprod{\vc}{\vx}\leq\inprod{\vc}{\vxos}+\frac{4 m}{t}$} \Comment{
Either update $t$ or end the algorithm }\label{line:updateT}

        \If {$t\geq \tend$}\label{line:EndAlg}
        
            \State \Return $\arg\min_{\vx:\vx\in\kin,\ma\vx=\vb}\left\{ t\inprod{\vc}{\vx}+\sum_{i=1}^{n}\barrini(\vxi)\right\}$. \Comment{End the algorithm}         
            
        \EndIf

		\State $t\leftarrow t\cdot\left[1+\frac{\eta}{4 m}\right]$ \Comment{Update $t$}
\label{line:updateT-Rule}

		\State Update $\vxos$ and jump to \cref{line:ForLoop}\Comment{$\vxos$ computed as as per \cref{eq:xoutstar-min}}

	\EndIf \label{line:updateT-end}

	\For{ all $i\in [n]$ } \label{line:KinKoutUpdate}

		\If{ $\inprod{\nabla\barrini(\vxi)}{\vxosi-\vxi}+\eta \|\vxosi-\vxi\|_{\vxi}\geq4 d_{i}$} 

			\If{ $\vxosi\in\ki$} \Comment{ Query $\oi$
}

				\State $\kini=\conv(\kini,\vxosi)$  \Comment{ Update $\kini$}\label{line:KinUpdated}

			\Else
			
			\State $\kouti=\kouti\cap\hi$, where $\hi=\oi(\vxosi)$  \Comment{
Update $\kouti$}\label{line:KoutUpdated}

			\EndIf

			\State Update $\vxos$ and jump to \cref{line:ForLoop} \Comment{$\vxos$ computed as per \cref{eq:xoutstar-min}}

		\EndIf

	\EndFor \label{line:KinKoutUpdateEnd}

	\State Set $\delta_{\vx}\defeq\frac{\eta}{2}\cdot\frac{\vxos-\vx}{\|\vxos-\vx\|_{\vx,1}},$where
$\|\mathbf{u}\|_{\vx,1}\defeq\sum_{i=1}^{n}\|\mathbf{u}\|_{\vxi}.$\label{line:IPM-step}

	\State $\vx\leftarrow\vx+\delta_{\vx}$ \Comment{Move $\vx$ towards $\vxos$} \label{line:MoveX}

\EndWhile

\end{algorithmic}
\end{algorithm}

\paragraph{Updating $\protect\vx$.} 
In order to move $\vx$ towards $\vxos$, we require two conditions to hold: 
$\vxos \in \kin$ and $\inprod{\vc}{\vx} \geq \inprod{\vc}{\vxos} + O(1/t)$. 

The first condition implies $\vxos\in \kcal$, 
which would in turn ensure feasibility of the resulting $\vx$ after a Newton step.
To formally check this condition, we check if the following inequality is satisfied for all $i\in [n]$ and for a fixed constant $\eta$: 
\begin{equation}
\inprod{\nabla\barrini(\vxi)}{\vxosi-\vxi}+\eta \cdot\|\vxosi-\vxi\|_{\vxi}\leq 4 d_{i}. \label[ineq]{eq:Condition2}
\end{equation}
The intuition is that since any point within the domain of a self-concordant barrier satisfies the inequalities in \cref{thm:sc1} and \cref{thm:sc2}, 
violating \cref{eq:Condition2}
implies that $\vxosi$ is far from $\kini$, and as a result, $\vxos$ is not a good candidate to move $\vx$ towards.

The second condition we impose, one of ``sufficient suboptimality'',  ensures significant progress in the objective value can be made when updating $\vx$. 
Formally, we check if
\begin{equation}
\vc^{\top}\vxos+\frac{4 m}{t}\leq\vc^{\top}\vx.\label[ineq]{eq:Condition1}
\end{equation}
If the inequality holds, then there is still ``room for progress'' to lower the value of $\inprod{\vc}{\vx}$ by updating $\vx$; if the inequality is violated, we update $t$ instead. 

Given the two conditions hold, we move $\vx$ towards $\vxos$ in \cref{line:MoveX}. 
The update step is normalized by the distance between $\vx$ and $\vxos$ measured in the local norm,
which enforces $\vx\in\kcal$ 
(since by the definition of self-concordance, the unit radius Dikin ball lies
inside the domain of the self-concordance barrier), 
and also helps bound certain first-order error
terms (\cref{eq:potential-ipm-1} in \cref{sec:IPM-analysis}).

The rest of this section details the procedure for when either of these conditions is violated. 

\paragraph{Updating the inner and outer convex sets.}

Suppose \cref{eq:Condition2} is violated for some $i\in [n]$.
Then $\vxosi\notin\kini$, which in turn means $\vxos$ \emph{might} not be in the feasible set $\kcal$. 
To reestablish \cref{eq:Condition2} for $i$, we can either update $\kini$, or update $\kouti$ and compute a new $\vxosi$ by \cref{eq:xoutstar-min}.

To decide which option to take, 
we query $\oi$ at the point $\vxosi$: 
if the oracle indicates that $\vxosi\in\ki$, 
then we incorporate $\vxosi$ into $\kini$ by redefining $\kini = \textrm{conv}(\kini, \vxosi)$ to be the convex hull of the current $\kini$ and $\vxosi$ (\cref{line:KinUpdated}). 
If, on the other hand, $\vxosi\notin \ki$, the oracle $\oi$ will return a halfspace $\mathcal{H}_i$ satisfying $\hi \supseteq \ki$.
Then we redefine $\kouti = \kouti \cap \mathcal{H}_i$ (\cref{line:KoutUpdated}).
After processing this update of the sets, the algorithm recomputes $\vxos$ and returns to the main loop since updating the sets does not necessarily imply that the new $\vxos$ satisfies $\vxos\in \kin$.

This update rule for the sets is exactly where our novelty lies: \emph{we do not arbitrarily update sets, rather, we update one only after checking the very specific condition $\vxosi\notin \kini$}. Since the separation oracle is called only in this part of the algorithm, performing this check enables us to dramatically reduce the number of calls we make to the separation oracle, thereby improving our oracle complexity. 

Further, this update rule shows that even when we cannot update the current $\vx$, 
we make progress by using all the information from the oracles. 
Over the course of the algorithm, we gradually expand $\kin$ and shrink $\kout$, until they well-approximate $\kcal$. 
To formally quantify the change in volume due to the above operations, we consider the following alternative view of $\vxos$.

\begin{prop}[Section 3 in \cite{bubeck2015entropic}; Section 3 of \cite{klartag2006convex}]\label[prop]{prop:EquivalenceOfCentroidAndMinimizer}
 Let $\theta\in\R^{n}$, and let $p_{\theta}$ be defined
as $p_{\theta}(\vx)\defeq\exp(\inprod{\theta}{\vx}-f(\theta))$,
where $f(\theta)\defeq\log\left[\int_{\kcal}\exp(\inprod{\theta}{\vu})d\vu\right]$.
Then, 
\[
\mathbb{E}_{\vx \sim p_{\theta}}[\vx]=\arg\min_{\vx\in\mathrm{int}(\kcal)}\left\{ f^{\ast}(\vx) - \inprod{\theta}{\vx}\right\}.
\]
\end{prop}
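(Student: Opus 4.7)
The plan is to compute $\nabla f(\theta)$ explicitly, recognize it as the expectation $\mathbb{E}_{\vx \sim p_\theta}[\vx]$, and then invert the gradient map via the Fenchel conjugate to obtain the claimed minimizer. This two-step reduction turns a statement about an expectation equaling a minimizer into a transparent calculus identity plus an appeal to \cref{lem:GradConjugate}.

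First I would differentiate $f(\theta) = \log \int_{\kcal} \exp(\inprod{\theta}{\vu}) d\vu$ under the integral sign (which is justified because $\kcal$ is bounded, so the integrand and its $\theta$-derivatives are uniformly dominated in $\theta$ on compact neighborhoods). This yields
\[
\nabla f(\theta) \;=\; \frac{\int_{\kcal} \vu \exp(\inprod{\theta}{\vu})\, d\vu}{\int_{\kcal} \exp(\inprod{\theta}{\vu})\, d\vu} \;=\; \int_{\kcal} \vu \cdot p_\theta(\vu)\, d\vu \;=\; \mathbb{E}_{\vx \sim p_\theta}[\vx].
\]
The same standard differentiation-under-the-integral argument gives $\nabla^2 f(\theta) = \mathrm{Cov}_{\vx \sim p_\theta}(\vx)$, which is positive definite whenever $\kcal$ has nonempty interior (since $p_\theta$ is not supported on a hyperplane), so $f$ is strictly convex and smooth on $\R^n$. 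In particular, $f$ is closed and convex, so \cref{lem:fastast} and \cref{lem:GradConjugate} apply, giving $\nabla f^* = (\nabla f)^{-1}$ on $\mathrm{int}(\kcal)$.

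Next I would analyze the right-hand side optimization problem $\min_{\vx \in \mathrm{int}(\kcal)} \{f^*(\vx) - \inprod{\theta}{\vx}\}$. By \cref{def:EntropicBarr}, $f^*$ is the entropic barrier on $\kcal$, hence a self-concordant barrier that blows up at $\partial \kcal$. Therefore the objective $f^*(\vx) - \inprod{\theta}{\vx}$ is strictly convex (since $f^*$ is, being the conjugate of a strictly convex smooth function), coercive on $\mathrm{int}(\kcal)$, and admits a unique interior minimizer $\vx^\star$ characterized by the first-order condition $\nabla f^*(\vx^\star) = \theta$.

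Finally, applying \cref{lem:GradConjugate} to invert this first-order condition gives $\vx^\star = \nabla f(\theta)$, and combining with the calculation from the first step yields $\vx^\star = \mathbb{E}_{\vx \sim p_\theta}[\vx]$, which is exactly the claim. The only real subtlety — and the step I would expect to require the most care to articulate cleanly — is verifying that the domain of the conjugate $f^*$ coincides with $\mathrm{int}(\kcal)$ so that the first-order optimality condition $\nabla f^*(\vx^\star) = \theta$ is both well-posed and attained; this follows from the standard fact that for the log-partition function of a bounded convex body, the effective domain of $\nabla f$ is all of $\R^n$ and its image is exactly $\mathrm{int}(\kcal)$, so $\nabla f^* = (\nabla f)^{-1}$ is defined on all of $\mathrm{int}(\kcal)$ and takes values in $\R^n$.
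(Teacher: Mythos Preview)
The paper does not supply its own proof of this proposition; it is stated as a known fact with citations to Bubeck--Eldan and Klartag. Your argument is correct and is precisely the standard derivation underlying those references: compute $\nabla f(\theta)=\mathbb{E}_{\vx\sim p_\theta}[\vx]$ by differentiating the log-partition function, then use \cref{lem:GradConjugate} to match this with the first-order optimality condition $\nabla f^*(\vx^\star)=\theta$ for the right-hand side. Your handling of the domain subtlety (that $f^*$ is the entropic barrier on $\mathrm{int}(\kcal)$, so the minimizer exists and is interior) is also appropriate.
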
 
By this proposition, $\vxos$ defined in \cref{eq:xoutstar-min} satisfies 
\begin{equation}
\vxos\defeq\mathbb{E}_{\vx \sim\exp\left\{ -t\inprod{\vc}{\vx}-\log\left[\int_{\kouthat}\exp(-t\inprod{\vc}{\vu})d\vu\right]\right\} }[\vx],\label{eq:xoutstar-mean}
\end{equation}
that is, $\vxos$ is the centroid of some exponential distribution over $\kouthat$.
As a result, if $\vxosi\notin \ki$, the hyperplane cutting $\kouthat$
through $\vxos$ will yield a large decrease in volume of $\kouthat$,
per \cref{rem:GrunbaumSimpleCase}. Therefore, the query result
in a large change in volume in either $\kin$ or $\kout$, allowing us to approximate $\kcal$ with a bounded number of iterations. 

\paragraph{Updating $\protect t$.}  If \cref{eq:Condition1} is violated,
then the current $\vx$ is ``as optimal as one can get'' for the current parameter $t$. 
This could mean one of two things:

The first possibility is that we have already reached an approximate optimum, which we verify by checking whether $t\geq O(1/\epsilon)$ in \cref{line:EndAlg}: 
If true, this indicates that we have attained our desired suboptimality, and the algorithm  terminates by returning 
\[
\vx^{\mathrm{ret}} = \arg\min_{\vx:\vx\in \kin, \ma\vx=\vb} \left\{t \cdot \inprod{\vc}{\vx} + \sum_{i = 1}^n \barrini(\vxi)\right\}.
\]
The point $\vx^{\mathrm{ret}}$ is feasible because it is in $\kin$ by definition, and the suboptimality of $O(1/\tend) = O(\epsilon)$ ensures it is an approximate optimum for the original problem. 

The second possibility is that we need to increase $t$ to set the next ``target suboptimality''. The value of $t$ is increased by a scaling factor of $1 + O(1/m)$ in \cref{line:updateT-Rule}. This scaling factor ensures, like in the standard IPM framework, that the next optimum is not too far from the current one. 
Following the update to $t$, we recompute $\vxos$ by \cref{eq:xoutstar-min}.
Since $\inprod{\vc}{\vx} > \inprod{\vc}{\vxos} + O(1/t)$ is not guaranteed with the new $t$ and $\vxos$, the algorithm jumps back to the start of the main loop.

\section{Our analysis \label{subsec:PotentialFns}} 
To analyze \cref{alg:decomposable-fn-min}, we define the following potential function
that captures the changes in $\kini$, $\kouti$, $t$, and $\vx$
in each iteration:
\begin{equation}
\pot\defeq \underbrace{t\inprod{\vc}{\vx} + \log\left[\int_{\kouthat}\exp(-t\inprod{\vc}{\vu})d\vu\right]}_{\text{entropic terms}}+\underbrace{\sum_{i\in[n]}\barrini(\vxi)}_{\text{universal terms}},\label{eq:TotalPotential}
\end{equation}
where $\log\left[\int_{\kouthat}\exp(-t\inprod{\vc}{\vu})d\vu\right]$ is related
to the entropic barrier on $\kouthat$ (see \cref{sec:OuterPotChange})
and $\barrini$ is the universal barrier on $\ki$. 
In the subsequent
sections, we study the changes in each of these potential functions along with obtaining bounds on the initial and final potentials 
and combine them to bound the algorithm's separation oracle complexity. 

\subsection{Potential change for the entropic terms \label{sec:OuterPotChange}}

In this section, we study the changes in the entropic terms of \cref{eq:TotalPotential} upon updating the outer convex set $\kouthat$ as well as $t.$ These two changes are
lumped together in this section because both updates affect the term $\log\left[\int_{\kouthat}\exp(-t\cdot\inprod{\vc}{\vx})\ d\vx\right]$, albeit in different ways: the update in $\kouthat$ affects it via Gr\"unbaum's Theorem; the update in $t$ affects it via the fact that, by duality with respect to the entropic barrier (\cref{def:EntropicBarr}), $\log\left[ \int_{\vx\in \kouthat} \exp(\inprod{\vx}{\vtheta}) d\vx\right]$ is also self-concordant. We detail these two potential changes below. 

\begin{lem}[Potential analysis for outer set]
\label[lem]{lem:PotKOutChange} Let $\kouthat\defeq\left\{ \vx:\vxi\in\kouti\cap\left\{ \vy:\ma\vy=\vb\right\} \right\} ,$
and let $\pot=\log\left[\int_{\kouthat}\exp(-t\inprod{\vc}{\vu})d\vu\right]+t\inprod{\vc}{\vx}+\sum_{i\in[n]}\barrini(\vxi)$.
Let $\hi$ be the halfspace generated by the separation oracle $\oi$ queried at $\vxosi$ as shown 
in \cref{line:KoutUpdated} of \cref{alg:decomposable-fn-min}. Then the new potential $\potnew=\log\left[\int_{\kouthat\cap\hi}\exp(-t\inprod{\vc}{\vu})d\vu\right]+t\inprod{\vc}{\vx}+\sum_{i\in[n]}\barrini(\vxi)$
is bounded from above as follows. 
\[
\potnew\leq\pot+\log(1-1/e).
\]
 
\end{lem}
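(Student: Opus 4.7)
The plan is to notice that the terms $t\inprod{\vc}{\vx}$ and $\sum_{i\in[n]}\barrini(\vxi)$ in $\pot$ are unaffected by the update $\kouti\mapsto\kouti\cap\hi$, so the difference between $\potnew$ and $\pot$ collapses to the change in the single entropic integral:
\[
\potnew - \pot \;=\; \log \frac{\int_{\kouthat\cap\hi}\exp(-t\inprod{\vc}{\vu})\,d\vu}{\int_{\kouthat}\exp(-t\inprod{\vc}{\vu})\,d\vu}.
\]
Hence it suffices to show the ratio on the right is at most $1-1/e$.

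Next, I would interpret this ratio probabilistically. Let $p$ be the density on $\kouthat$ with $p(\vu)\propto\exp(-t\inprod{\vc}{\vu})$, which is log-concave since $-t\inprod{\vc}{\cdot}$ is affine and $\mathbf{1}_{\kouthat}$ is the indicator of a convex set. By \cref{eq:xoutstar-mean} (the specialization of \cref{prop:EquivalenceOfCentroidAndMinimizer} to our setting), the centroid of $p$ equals $\vxos$. The ratio above therefore equals $\Pr_{\vu\sim p}[\vu\in\hi]$, so the task reduces to showing $\Pr_p[\vu\notin\hi]\geq 1/e$.

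For the final step I would invoke Grünbaum's theorem (\cref{thm:GrunbaumGeneral}) on $p$. Because $\vxosi\notin\ki$, the oracle returns a halfspace $\hi$ of the form $\{\vu_i:\inprod{\vv_i}{\vu_i}\leq a_i\}$ with $\inprod{\vv_i}{\vxosi}\geq a_i$. Lifting $\vv_i$ to a vector $\vv$ on the full product space by zero-padding outside block $i$, the closed halfspace $\mathcal{H}\defeq\{\vu:\inprod{\vv}{\vu}\geq\inprod{\vv}{\vxos}\}$ has its boundary passing through the centroid $\vxos$ of $p$, so the offset parameter in \cref{thm:GrunbaumGeneral} is $0$ and we obtain $\Pr_p[\vu\in\mathcal{H}]\geq 1/e$. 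Since $\inprod{\vv}{\vxos}=\inprod{\vv_i}{\vxosi}\geq a_i$, the set $\mathcal{H}$ is contained (up to a measure-zero boundary) in $\{\vu:\inprod{\vv_i}{\vu_i}>a_i\}=\{\vu\notin\hi\}$, which gives $\Pr_p[\vu\notin\hi]\geq 1/e$ as needed. The main subtlety I expect is precisely this bookkeeping: lifting the oracle's halfspace from block $i$ to the full product space and correctly identifying the discarded side so that the hyperplane through the centroid gives an upper bound on the kept mass; once that is set up, Grünbaum at the centroid yields the constant $1-1/e$ in one line.
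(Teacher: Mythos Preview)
Your proposal is correct and follows essentially the same approach as the paper: cancel the unchanged terms, interpret the remaining ratio as the $p$-mass of $\hi$ for the log-concave density $p\propto\exp(-t\inprod{\vc}{\cdot})$ on $\kouthat$, identify $\vxos$ as its centroid via \cref{prop:EquivalenceOfCentroidAndMinimizer}, and apply Gr\"unbaum's theorem. Your treatment is in fact more careful than the paper's, which simply asserts that ``$\hi$ passes directly through $\vxosi$''; your explicit lifting of $\hi$ to the full product space and comparison with the centroid-hyperplane $\mathcal{H}$ handles both the block structure and the possibility that the oracle's hyperplane is strictly deeper than $\vxosi$.
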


\begin{proof}
The change in potential is given by 
\[
\potnew-\pot=\log\left[\frac{\int_{\kouthat\cap\hi}\exp(-t\cdot\inprod{\vc}{\vx})\ d\vx}{\int_{\kouthat}\exp(-t\cdot\inprod{\vc}{\vx})\ d\vx}\right].
\]
We now apply \cref{thm:GrunbaumGeneral} to the right hand side, with
the function $f(\vx) = \exp(-t\cdot\inprod{\vc}{\vx} - A(t\vc))$, where $A(\vtheta) = \log \left[\int_{\kouthat} \exp(-\inprod{\vtheta}{\vx}) d\vx\right]$. Noting that each halfspace $\hi$ passes directly
through $\vxosi$, where $\vxos$ is the centroid of $\kouthat$ with respect
to $f$ (by the definition of $\vxos$ in \cref{eq:xoutstar-mean}), \cref{rem:GrunbaumSimpleCase} applies and gives the claimed volume change. 
\end{proof}

To capture the change in potential due to the update in $t$, we recall the alternative perspective to the function $\log\left[\int_{\kouthat} \exp(-t \inprod{\vc}{\vx}) d\vx\right]$ given by \cref{def:EntropicBarr} and derive properties of self-concordant barriers. 
\begin{lem}
\label[lem]{lem:EtChange}
Consider a $\nu$-self-concordant barrier $\barr:\textrm{int}(\kcal)\rightarrow\R$
over the interior of a convex set $\kcal\subseteq\R^d$. Define 
\begin{equation}
\Et{\barr}t\defeq\min_{\vx}\left[t\cdot \inprod{\vc}{\vx}+\barr(\vx)\right] \text{ and }
\vxt\defeq\arg\min_{\vx}\left[t\cdot\inprod{\vc}{\vx}+\barr(\vx)\right].\label{eq:defEt}
\end{equation}
Then for $0\leq h\leq\frac{1}{\sqrt{\nu}}$, we have 
\[
\Et{\barr}t+th \cdot\inprod{\vxt}{\vc}\geq\Et{\barr}{t(1+h)}\geq\Et{\barr}t+ht\cdot\inprod{\vc}{\vxt}-h^{2}\nu.
\]
\end{lem}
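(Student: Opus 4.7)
The upper bound is immediate: plugging the point $\vx_t$ into the minimization defining $\Et{\psi}{t(1+h)}$ gives
\[
\Et{\psi}{t(1+h)} \;\leq\; t(1+h)\inprod{\vc}{\vx_t} + \psi(\vx_t) \;=\; \Et{\psi}{t} + th\inprod{\vc}{\vx_t},
\]
which is the first half of the displayed inequality. The real work lies in the lower bound; my plan is to view $g(\tau) \defeq \Et{\psi}{\tau}$ as a smooth scalar function on $(0,\infty)$ and control its second derivative using the self-concordance parameter, then apply a second-order Taylor expansion.

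By the envelope theorem, $g'(\tau) = \inprod{\vc}{\vx_\tau}$. Differentiating the first-order optimality condition $\tau\vc + \nabla\psi(\vx_\tau) = 0$ in $\tau$ yields $\tfrac{d\vx_\tau}{d\tau} = -[\nabla^2\psi(\vx_\tau)]^{-1}\vc$, and therefore
\[
g''(\tau) \;=\; -\inprod{\vc}{[\nabla^2\psi(\vx_\tau)]^{-1}\vc}.
\]
Substituting $\vc = -\nabla\psi(\vx_\tau)/\tau$ into this identity and invoking the defining inequality of a $\nu$-self-concordant barrier from \cref{def:SelfConcordanceBarr}, namely $\nabla\psi(\vx_\tau)^\top [\nabla^2\psi(\vx_\tau)]^{-1}\nabla\psi(\vx_\tau) \leq \nu$, I obtain the key estimate $g''(\tau) \geq -\nu/\tau^2$.

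With $g''$ so controlled, I apply Taylor's theorem with integral remainder on $[t, t(1+h)]$:
\[
g(t(1+h)) \;=\; g(t) + th\cdot g'(t) + \int_0^{th}(th-s)\, g''(t+s)\, ds.
\]
Substituting the bound on $g''$ and making the change of variable $u = t+s$, the remainder's lower bound reduces to $-\nu\bigl(h - \log(1+h)\bigr)$. Since $h - \log(1+h) \leq h^2/2 \leq h^2$ for $h \geq 0$ (an elementary one-variable inequality verified by differentiating once), this gives $g(t(1+h)) \geq g(t) + ht\inprod{\vc}{\vx_t} - h^2\nu$, which is the claimed lower bound.

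The main technical obstacle is the derivation $g''(\tau) \geq -\nu/\tau^2$: it requires combining the envelope-theorem identity for $g''$ with the defining inequality of self-concordant barriers, and the crucial step is recognizing that the quadratic form $\inprod{\vc}{[\nabla^2\psi]^{-1}\vc}$ is exactly $\tau^{-2}\|\nabla\psi(\vx_\tau)\|_{[\nabla^2\psi(\vx_\tau)]^{-1}}^2$ after eliminating $\vc$ via the optimality condition. Once this estimate is in hand, the Taylor integration is routine. The hypothesis $h \leq 1/\sqrt{\nu}$ does not appear essential for the inequality itself but keeps the perturbation small enough to dovetail with the IPM-style use of this lemma elsewhere in the analysis.
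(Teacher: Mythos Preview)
Your proof is correct, but it takes a genuinely different route from the paper's. The paper observes that $\Et{\psi}{t} = -\psi^*(-t\vc)$ where $\psi^*$ is the Fenchel conjugate, invokes \cref{thm:ConjugateSC} to conclude that $\psi^*$ is itself self-concordant, and then applies the quadratic-approximation bound of \cref{thm:QuadApproxErr} to $\psi^*$ at the point $-t\vc$ with perturbation $-th\vc$. In that argument the hypothesis $h\leq 1/\sqrt{\nu}$ is genuinely used: it is exactly what guarantees $\|{-th\vc}\|_{-t\vc}\leq 1$, which is the precondition of \cref{thm:QuadApproxErr}. Your approach instead differentiates the value function directly, obtains the clean scalar estimate $g''(\tau)\geq -\nu/\tau^2$ from the barrier inequality in \cref{def:SelfConcordanceBarr}, and integrates. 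This is more elementary in that it avoids both the conjugate-self-concordance theorem and the quadratic-approximation theorem, and as you correctly note it in fact proves the lower bound for \emph{all} $h\geq 0$, not just $h\leq 1/\sqrt{\nu}$. The paper's route, on the other hand, fits naturally into the standard self-concordance toolbox and makes explicit the duality between the entropic barrier and its log-partition conjugate that is used elsewhere in \cref{sec:OuterPotChange}.
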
 
\begin{proof}
Note that here the first inequality is fairly generic and holds for
any function $\barr.$ 
By definition of $\Et{\barr}{t(1+h)}$ and $\Et{\barr}t$ in  \cref{eq:defEt} and using
the fact that the value on the right hand side of \cref{eq:lem_Et_1}
is smaller than the expression evaluated at a fixed $\vx=\vxt,$we
have 
\begin{align}
\Et{\barr}{t(1+h)} & =\min_{\vx}\left[t(1+h)\cdot\inprod{\vx}{\vc}+\barr(\vx)\right]\label{eq:lem_Et_1}\\
 & \leq t(1+h)\cdot\inprod{\vxt}{\vc}+\barr(\vxt)\nonumber \\
 & =\Et{\barr}t+th \cdot \inprod{\vxt}{\vc}.\nonumber 
\end{align}
 We now prove the second inequality of the lemma. This one specifically
uses the self-concordance of $\barr.$ Observe first, by definition,
\begin{equation}
\Et{\barr}t=-\barr^{\ast}(-t\vc).\label{eq:EtEqualsPhiAst}
\end{equation}
Since $\barr$ is a self-concordant barrier (and hence,
a self-concordant function), \cref{thm:ConjugateSC}
implies that $\barr^{\ast}$ is a self-concordant function as well.
Then, by applying \cref{thm:QuadApproxErr} to $\barr^{\ast}$ under
the assumption $\|-th\vc\|_{-t\vc}\leq1$ yields 
\begin{equation}
\barr^{\ast}(-t\vc-th\vc)\leq\barr^{\ast}(-t\vc)+\inprod{\nabla\barr^{\ast}(-t\vc)}{-th\vc}+\left[\frac{1}{2}\|-th\vc\|_{-t\vc}^{2}+\frac{\|-th\vc\|_{-t\vc}^{3}}{3(1-\|-th\vc\|_{-t\vc})}\right].\label[ineq]{eq:PhiAstQuadErr}
\end{equation}
By applying the first-order optimality condition to the definition
of $\vxt$ in \cref{eq:defEt}, we see that 
\begin{equation}
\nabla\barr(\vxt)=-t\vc.\label{eq:GradXt}
\end{equation}
 Next, evaluating $a\defeq\|-th\vc\|_{-t\vc}$ to check the assumption $\|-th\vc\|_{-t\vc}\leq1$, we get 
\begin{align*}
a^{2}=h^{2}\inprod{-t\vc}{\nabla^{2}\barr^{\ast}(-t\vc)\cdot(-t\vc)} & =h^{2}\inprod{\nabla\barr(\vxt)}{\nabla^{2}\barr^{\ast}(\nabla \barr(\vxt))\cdot\nabla\barr(\vxt)}\\
 & =h^{2}\inprod{\nabla\barr(\vxt)}{(\nabla^{2}\barr(\vxt))^{-1}\cdot\nabla\barr(\vxt)}\\
 & \leq h^{2}\nu
\end{align*}
where we used \cref{eq:GradXt} and \cref{lem:HessianConjugate},
in the first two equations and \cref{def:SelfConcordanceBarr} and
the complexity value of $\barr$ in the last step. Our range of $h$
proves that $a\leq1$, which is what we need for \cref{eq:PhiAstQuadErr}
to hold. We continue our computation to get 
\begin{align}
\left[\frac{1}{2}\|-th\vc\|_{-t\vc}^{2}+\frac{\|-th\vc\|_{-t\vc}^{3}}{3(1-\|-th\vc\|_{-t\vc})}\right]\leq\frac{1}{2}h^{2}\nu+\frac{1}{3}h^{3}\nu^{3/2} & \leq\frac{1}{2}h^{2}\nu+\frac{1}{3}h^{2}\nu\leq h^{2}\nu.\label[ineq]{eq:temp1}
\end{align}
Applying \cref{lem:GradConjugate} to \cref{eq:GradXt} gives
\begin{equation}
\nabla\barr^{\ast}(-t\vc)=\vxt.\label{eq:nablaPhiAstXt}
\end{equation} Plugging \cref{eq:nablaPhiAstXt} and \cref{eq:temp1} into the first
and second-order terms, respectively, of \cref{eq:PhiAstQuadErr} gives
\[
\barr^{\ast}(-t\vc-th\vc)\leq\barr^{\ast}(-t\vc)+\inprod{\vxt}{-th\vc}+h^{2}\nu.
\]
 Plugging in \cref{eq:EtEqualsPhiAst} gives the desired inequality
and completes the proof. 
\end{proof}

To finally compute the potential change due to $t,$ we need the following
result about the self-concordance parameter of the entropic barrier.
While \cite{bubeck2015entropic} prove that this barrier on a set in $\R^d$ is $(1+\epsilon_{d})d$-self-concordant,
the recent work of \cite{chewi2021entropic} remarkably improves this
complexity to exactly $d$. 
\begin{thm}[\cite{chewi2021entropic}]
\label{thm:chewi} The entropic barrier on any convex body $\kcal\subseteq\R^d$
is a $d$-self-concordant barrier. 
\end{thm}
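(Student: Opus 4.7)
My plan is to reduce the claim to a sharp variance bound for exponentially tilted uniform distributions on the convex body, following the strategy of \cite{chewi2021entropic}. Recall from \cref{def:EntropicBarr} that $\psi = f^*$ where $f(\vtheta) = \log \int_{\kcal} \exp(\inprod{\vx}{\vtheta}) d\vx$, so $f$ is the log-Laplace transform of the uniform measure on $\kcal$ and its derivatives are the cumulants of the exponentially tilted distribution $p_\vtheta(\vx) \propto \exp(\inprod{\vtheta}{\vx}) \mathbf{1}_{\kcal}(\vx)$. In particular, $\nabla f(\vtheta) = \E_{p_\vtheta}[X]$ and $\nabla^2 f(\vtheta) = \mathrm{Cov}_{p_\vtheta}[X]$, so both the gradient and Hessian of $f$ have a clean probabilistic meaning.

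Combining \cref{lem:GradConjugate} and \cref{lem:HessianConjugate}, at any $\vx \in \mathrm{int}(\kcal)$ there is a unique $\vtheta$ with $\vx = \nabla f(\vtheta)$, at which $\nabla \psi(\vx) = \vtheta$ and $\nabla^2 \psi(\vx) = (\mathrm{Cov}_{p_\vtheta}[X])^{-1}$. Therefore
\[ \nabla \psi(\vx)^\top (\nabla^2 \psi(\vx))^{-1} \nabla \psi(\vx) = \vtheta^\top \mathrm{Cov}_{p_\vtheta}[X] \vtheta = \mathrm{Var}_{p_\vtheta}[\inprod{\vtheta}{X}], \]
so bounding the self-concordance parameter by $d$ is equivalent to showing $\sup_\vtheta \mathrm{Var}_{p_\vtheta}[\inprod{\vtheta}{X}] \leq d$. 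Separately, the third-derivative condition in \cref{def:SelfConcordanceBarr} transfers via Fenchel duality to a comparison between the third central moment and the $3/2$-power of the variance of the one-dimensional log-concave random variable $\inprod{\vh}{X}$ under $p_\vtheta$, and this follows from a standard H\"older-type inequality for log-concave measures; this is the easy half.

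The main obstacle is the sharp variance bound $\mathrm{Var}_{p_\vtheta}[\inprod{\vtheta}{X}] \leq d$. Let $Z = \inprod{\vtheta}{X}$ with $X \sim p_\vtheta$; the density of $Z$ on $\R$ is log-concave since linear pushforwards preserve log-concavity. The standard Brascamp--Lieb Poincar\'e inequality applied to $p_\vtheta$ yields only $\mathrm{Var}[Z] \leq (1 + o_d(1)) d$, as in \cite{bubeck2015entropic}. To achieve the sharp constant $d$, I would parameterize $s \mapsto g(s) = \log \int_{\kcal} \exp(s \inprod{\vtheta}{\vx}) d\vx$ so that $g''(1) = \mathrm{Var}[Z]$, and control this one-parameter family by differentiating in $s$. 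The key technical step, which is where Chewi's refinement lies, is a dimension-preserving identity expressing $g''(1)$ as a functional of the one-dimensional projected density that can be bounded by $d$ exactly, via a sharp moment inequality specific to log-concave distributions on $\R$. Obtaining this exact constant (rather than $(1+o_d(1))d$) is the delicate part of \cite{chewi2021entropic} and cannot be bypassed with purely classical functional inequalities.
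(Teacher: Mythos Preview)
The paper does not prove \cref{thm:chewi}; it is quoted from \cite{chewi2021entropic} and used as a black box in the proof of \cref{lem:PotChangeT}. So there is no ``paper's own proof'' to compare against here.

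Your reduction is correct: by \cref{lem:GradConjugate} and \cref{lem:HessianConjugate}, the barrier-parameter inequality for $\psi=f^*$ is exactly $\mathrm{Var}_{p_\vtheta}[\inprod{\vtheta}{X}]\le d$, and the self-concordance (third-derivative) condition follows from the standard third-moment/variance inequality for one-dimensional log-concave laws together with \cref{thm:ConjugateSC}. However, your description of the sharp step is both incomplete and somewhat off. Chewi's argument does not proceed by projecting to a one-dimensional density and proving a moment inequality on $\R$; rather, the observation is that on the support of $p_\vtheta$ one has $\log p_\vtheta(X)=\inprod{\vtheta}{X}-f(\vtheta)$, so $\mathrm{Var}_{p_\vtheta}[\inprod{\vtheta}{X}]=\mathrm{Var}_{p_\vtheta}[-\log p_\vtheta(X)]$, and the latter is at most $d$ by the \emph{varentropy bound} for log-concave measures on $\R^d$ (Nguyen; Fradelizi--Madiman--Wang). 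Your sketch of ``parameterize $s\mapsto g(s)$ and differentiate'' with a ``dimension-preserving identity for the one-dimensional projected density'' does not lead to this; the sharp constant genuinely lives in the $d$-dimensional varentropy inequality, not in a scalar moment bound. As written, your proposal correctly isolates where the difficulty is but does not supply the missing ingredient, and the mechanism you gesture at would not yield the exact constant $d$.
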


We may now compute the potential change due to change in $t$ in \cref{line:updateT-Rule}. 
\begin{lem}\label[lem]{lem:PotChangeT}
When $t$ is updated to $t\cdot\left[1+\frac{\eta}{4 m}\right]$
in \cref{line:updateT-Rule} of \cref{alg:decomposable-fn-min}, the potential $\pot$ \cref{eq:TotalPotential} increases to $\potnew$ as follows: $$\potnew \leq \pot + \eta+\eta^{2}.$$ 
\end{lem}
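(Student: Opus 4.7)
The plan is to recognize the log-integral term as the Fenchel conjugate of the entropic barrier on $\kouthat$, then apply \cref{lem:EtChange} with the self-concordance bound from \cref{thm:chewi}, and finally invoke the condition that triggered the $t$-update in \cref{line:updateT} to convert the first-order change into an $O(\eta)$ quantity.

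First I would observe that, by \cref{def:EntropicBarr} applied to $\kouthat$ and by biconjugacy (\cref{lem:fastast}), the entropic barrier $\barrout$ satisfies $\barrout^{*}(\vtheta)=\log\!\int_{\kouthat}\exp\inprod{\vx}{\vtheta}\,d\vx$. Hence the log-integral appearing in $\pot$ equals $\barrout^{*}(-t\vc)$, and since the universal-barrier terms in \cref{eq:TotalPotential} do not depend on $t$, the change caused solely by updating $t\mapsto t_{\textrm{new}}=t(1+h)$ with $h=\eta/(4m)$ is
\[
\potnew-\pot \;=\; ht\inprod{\vc}{\vx} \;+\; \bigl[\barrout^{*}(-t(1+h)\vc)-\barrout^{*}(-t\vc)\bigr].
\]

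Next I would apply \cref{lem:EtChange} to $\barr=\barrout$, noting that by definition $\Et{\barrout}{t}=-\barrout^{*}(-t\vc)$ and that the minimizer in \cref{eq:defEt} is precisely $\vxos$ from \cref{eq:xoutstar-min}. The self-concordance parameter of $\barrout$, by \cref{thm:chewi}, equals the dimension of $\kouthat$, which is at most $m+n$; the precondition $h\leq 1/\sqrt{\nu}$ therefore holds comfortably for $h=\eta/(4m)$ with $\eta=1/100$. The lemma's lower bound on $\Et{\barrout}{t(1+h)}$ rearranges to
\[
\barrout^{*}(-t(1+h)\vc)-\barrout^{*}(-t\vc) \;\leq\; -ht\inprod{\vc}{\vxos}+h^{2}\nu.
\]

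Combining these gives $\potnew-\pot \leq ht\inprod{\vc}{\vx-\vxos}+h^{2}\nu$. Here the crucial step is that \cref{line:updateT-Rule} is executed only after the check in \cref{line:updateT} succeeds, i.e., when $\inprod{\vc}{\vx}\leq\inprod{\vc}{\vxos}+\tfrac{4m}{t}$; substituting this into the first-order term yields $ht\inprod{\vc}{\vx-\vxos}\leq 4mh=\eta$ by the choice of $h$. For the second-order term, $h^{2}\nu\leq \tfrac{\eta^{2}}{16m^{2}}(m+n)\leq \eta^{2}$ as long as $m+n\leq 16m^{2}$, which holds in the regime of interest. Adding the two bounds gives the desired $\potnew\leq\pot+\eta+\eta^{2}$.

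The main (mild) obstacle is bookkeeping: one must be careful that it is $\barrout$ (not $\barrini$) whose self-concordance is invoked, that $\nu$ refers to the ambient dimension of $\kouthat$ rather than of the individual $\kouti$'s, and that $\vxos$ in \cref{eq:defEt} truly coincides with the centroid/Newton target used by the algorithm (which is what justifies the substitution). Everything else is a short computation that is already packaged inside \cref{lem:EtChange} and \cref{thm:chewi}.
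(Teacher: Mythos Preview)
Your proposal is correct and follows essentially the same route as the paper: identify the log-integral as $\barrout^{*}(-t\vc)=-\Et{\barrout}{t}$, apply \cref{lem:EtChange} with $\vxt=\vxos$ and the self-concordance bound from \cref{thm:chewi}, then use the trigger condition $\inprod{\vc}{\vx}\leq\inprod{\vc}{\vxos}+4m/t$ to bound the linear term by $\eta$ and the quadratic term $h^{2}\nu$ by $\eta^{2}$. The only cosmetic difference is that the paper takes $\nu=m$ directly whereas you use the (equally sufficient) bound $\nu\leq m+n\leq 16m^{2}$.
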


\begin{proof}
Recall that the barrier function we use for the set $\kouthat$ is
the entropic barrier $\barrout.$ By \cref{eq:defEt} and the definition of conjugate,
we have 
\begin{align*}
-\Et{\barrout}t & =\max_{\vv}\left[\inprod{-t\vc}{\vv}-\barrout(\vv)\right]=\barrout^{\ast}(-t\vc).
\end{align*}
Applying \cref{def:EntropicBarr}, taking the conjugate on
both sides of the preceding equation, and using \cref{lem:fastast}
then gives 
\begin{equation}
-\Et{\barrout}t=\log\left[\int_{\kouthat}\exp(-t\cdot\inprod{\vc}{\vu})\ d\vu\right].\label{eq:PotTUpdate1}
\end{equation} From \cref{eq:TotalPotential},
the change in potential by changing $t$ to $t\cdot(1+h)$ for some $h>0$ may be expressed as 
\[
\potnew-\pot=\log\left[\int_{\kouthat}\exp\inprod{-t(1+h)\vc}{\vv}d\vv\right]-\log\left[\int_{\kouthat}\exp\inprod{-t\vc}{\vv}d\vv\right]+\inprod{th\cdot\vc}{\vx}.
\]
By applying $h=\frac{\eta}{4 m}$ and $\nu=m$ (via a direct application
of \cref{thm:chewi}), we have $h=\frac{\eta}{4m}\leq\frac{1}{\sqrt{m}}=\frac{1}{\sqrt{\nu}}$, and
so we may now apply \cref{eq:PotTUpdate1} and \cref{lem:EtChange}
in the preceding equation to obtain the following bound. 
\[
\potnew-\pot\leq th\inprod{\vc}{\vx}-th\inprod{\vc}{\vxt}+h^{2}\nu.
\]
From \cref{eq:xoutstar-min} and \cref{eq:defEt}, we see that $\vxt$ for the entropic
barrier satisfies the equation $\vxt=\vxos$, and applying the guarantee
$\inprod{\vc}{\vx}\leq\inprod{\vc}{\vxos}+\frac{4 m}{t}$ to this
inequality, we obtain 
\begin{align*}
\potnew-\pot & \leq th\cdot\frac{4 m}{t}+h^{2}\nu=\eta+\left(\frac{\eta}{4m}\right)^{2}\nu\leq\eta+\eta^{2}.
\end{align*}
\qedhere 
\end{proof}

\subsection{Potential change for the universal terms \label{sec:InnerPotChange}}

In this section, we study the change in volume on growing the inner
convex set $\kini$ in \cref{line:KinUpdated}. As mentioned in \cref{sec:AlgOverview},
our barrier of choice on this set is the universal barrier introduced in  \cite{nesterov1994interior}
(see also \cite{guler1997self}). 
This barrier was constructed to
demonstrate that \emph{any} convex body in $\R^n$ admits an $O(n)$-self-concordant
barrier, and its complexity parameter was improved to exactly $n$
in \cite{lee2021universal}. 

Conceptually, we choose the universal barrier for the inner set because the operation we perform on the inner set (i.e., generating its convex
hull with an external point $\vxos$) is dual to the operation of
intersecting the outer set with the separating halfspace containing
$\vxos$ (see \cref{lem:polarAfterAddingPoint}), which suggests the
use of a barrier dual to the entropic barrier used on the outer set.
As explained in \cite{bubeck2015entropic}, for the special case of
convex cones, the universal barrier is precisely one such barrier. 

We now state a technical property of the universal barrier, which
we use in the potential argument for this section. 
\begin{lem}[{\cite[Lemma 1]{lee2021universal},  \cite{nesterov1994interior,guler1997self}}]
\label[lem]{lem:UniversalBarrierCalculus}
Given a convex set $\kcal\in\R^{d}$ and $\vx\in\kcal$, let $\barrUniK(\vx)\defeq\log\vol(\kcal-\vx)^{\circ}$
be the universal barrier defined on $\kcal$ with respect to $\vx.$ 
Let $\mu\in\R^{d}$ be the center of gravity and $\Sigma\in\R^{d\times d}$
be the covariance matrix of the body $(\kcal-\vx)^{\circ}$, where $(\kcal-\vx)^\circ=\{\vy\in \R^n:\vy^\top(\vz-\vx)\leq 1,\forall \vz\in \kcal\}$ is the polar set of $\kcal$ with respect to $\vx$.
Then, we
have that 
\[
\nabla\barrUniK(\vx)=(d+1)\mu,\ \nabla^{2}\barrUniK(\vx)=(d+1)(d+2)\Sigma+(d+1)\mu\mu^{\top}.
\]
\end{lem}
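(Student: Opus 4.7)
The plan is to differentiate $\psi_\mathcal{K}(\vx) = \log\vol((\mathcal{K}-\vx)^\circ)$ directly by first rewriting the polar volume using polar coordinates. If $h(\vomega) \defeq \sup_{\vz \in \mathcal{K}} \vomega^\top \vz$ denotes the support function of $\mathcal{K}$, then for $\vx \in \mathrm{int}(\mathcal{K})$ a vector $\vy = r\vomega$ with $\vomega \in S^{d-1}$ lies in $(\mathcal{K}-\vx)^\circ$ iff $r \leq 1/(h(\vomega) - \vomega^\top \vx)$. Integrating $r^{d-1}$ in $r$ from $0$ to this upper limit yields the key representation
\[
g(\vx) \defeq \vol((\mathcal{K}-\vx)^\circ) = \frac{1}{d}\int_{S^{d-1}} (h(\vomega) - \vomega^\top \vx)^{-d}\, d\vomega.
\]
This form is smooth in $\vx$ (since $h(\vomega) - \vomega^\top\vx > 0$ on the interior), so I can differentiate under the integral sign with no further subtlety: the first derivative brings down a factor of $\vomega$ and raises the exponent, and similarly for the second derivative.

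Next, I would re-express the resulting spherical integrals as moments of the uniform distribution over $L(\vx) \defeq (\mathcal{K}-\vx)^\circ$ by redoing the polar change of variables. Specifically, the first moment satisfies
\[
\int_{L(\vx)} \vy\, d\vy = \frac{1}{d+1}\int_{S^{d-1}}(h(\vomega)-\vomega^\top\vx)^{-(d+1)}\vomega\, d\vomega,
\]
and the analogous identity for $\vy\vy^\top$ produces a $\frac{1}{d+2}$ factor with exponent $-(d+2)$. Combining these with the derivatives of $g$ from the first step gives the clean relations
\[
\nabla g(\vx) = (d+1)\, g(\vx)\,\mu,\qquad \nabla^2 g(\vx) = (d+1)(d+2)\, g(\vx)\,(\Sigma+\mu\mu^\top),
\]
where $\mu$ and $\Sigma$ are the centroid and covariance of $L(\vx)$.

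To finish, I apply the standard $\log$-derivative identities $\nabla\log g = \nabla g / g$ and $\nabla^2 \log g = \nabla^2 g/g - \nabla g(\nabla g)^\top/g^2$. The gradient statement $\nabla \psi_\mathcal{K}(\vx) = (d+1)\mu$ is then immediate, while for the Hessian I get $(d+1)(d+2)(\Sigma+\mu\mu^\top) - (d+1)^2 \mu\mu^\top$, and the coefficient of $\mu\mu^\top$ simplifies to $(d+1)[(d+2)-(d+1)] = (d+1)$, which matches the stated formula.

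The main potential obstacle is ensuring that the polar-coordinate representation and the differentiation under the integral sign are both justified; this requires the interior assumption $\vx \in \mathrm{int}(\mathcal{K})$ so that $h(\vomega) - \vomega^\top \vx$ is bounded away from $0$ uniformly in $\vomega$, together with the boundedness of $\mathcal{K}$ to get integrability. Once these regularity issues are dispatched, the rest is bookkeeping, and the whole argument is essentially just polar coordinates plus the $\log$-derivative chain rule.
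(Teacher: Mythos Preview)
Your argument is correct and is exactly the standard derivation found in the cited references (in particular \cite{lee2021universal}): represent the polar volume via the radial/support-function formula, differentiate under the integral, and reinterpret the resulting spherical integrals as the first and second moments of the uniform measure on $(\mathcal{K}-\vx)^\circ$. The paper does not supply its own proof of this lemma but simply quotes it from those sources, so there is nothing further to compare.
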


\begin{lem}
\label[lem]{lem:UniversalBarrierPotentialChange} Given a convex set $\kcal\subseteq\R^d$
and a point $\vx\in\kcal.$ Let $\barrUniK\defeq\log\vol(\kcal-\vx)^{\circ}$
be the universal barrier defined on $\kcal$ with respect to $\vx.$
Let $\eta \leq 1/4$ and $\vy\in\kcal$ be a point satisfying the following condition
\begin{equation}
\inprod{\nabla\barrUniK(\vx)}{\vy-\vx}+\eta\|\vy-\vx\|_{\vx}\geq 4d,\label{eq:violatedDistCond}
\end{equation}
and construct the new set $\conv\left\{ \kcal,\vy\right\} .$ Then, the
value of the universal barrier defined on this new set with respect
to $\vx$ satisfies the following inequality. 
\[
\barrUniKNew(\vx)\defeq\psi_{\conv\left\{ \kcal,\vy\right\} }(\vx)=\log\vol(\conv( \kcal,\vy) -\vx)^{\circ}\leq\barrUniK(\vx)+\log(1-1/e+\eta).
\]
\end{lem}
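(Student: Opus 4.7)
The plan is to reduce the barrier inequality to a volume-ratio bound on polar bodies, and then invoke Grunbaum's theorem along with the universal-barrier calculus from \cref{lem:UniversalBarrierCalculus} to convert the gradient/local-norm hypothesis into an applicable form.

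First, I unfold the definition of $\barrUniK$ so that the claim becomes $\vol((\conv\{\kcal, \vy\} - \vx)^\circ) \leq (1 - 1/e + \eta)\, \vol((\kcal - \vx)^\circ)$. By \cref{lem:polarAfterAddingPoint} applied to $\kcal - \vx$ and the point $\vy - \vx$, the polar on the left is contained in $(\kcal - \vx)^\circ \cap \h$, where $\h = \{\vz : \inprod{\vz}{\vy - \vx} \leq 1\}$. So it suffices to show $\vol((\kcal - \vx)^\circ \setminus \h) \geq (1/e - \eta)\, \vol((\kcal - \vx)^\circ)$, for which I apply \cref{thm:GrunbaumGeneral} to the uniform density on $(\kcal - \vx)^\circ$ with halfspace $\mathcal{H}' = \{\vz : \inprod{\vz}{\vy - \vx} \geq 1\}$. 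This reduces the claim to showing $t^+ \leq \eta$, where $t = (1 - \inprod{\mu}{\vy - \vx})/\sqrt{(\vy - \vx)^\top \Sigma (\vy - \vx)}$ and $\mu, \Sigma$ are the centroid and covariance of the polar body $(\kcal - \vx)^\circ$.

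Next, I would translate the hypothesis into the bound $t^+ \leq \eta$ using \cref{lem:UniversalBarrierCalculus}, which identifies $\nabla \barrUniK(\vx) = (d+1)\mu$ and $\nabla^2 \barrUniK(\vx) = (d+1)(d+2)\Sigma + (d+1)\mu\mu^\top$. Letting $g = \inprod{\nabla\barrUniK(\vx)}{\vy - \vx}$ and $n = \|\vy - \vx\|_\vx$, these identities give $(d+1)\inprod{\mu}{\vy - \vx} = g$ and $(d+1)(d+2)(\vy - \vx)^\top \Sigma (\vy - \vx) = n^2 - g^2/(d+1)$. I would split into two cases: when $g \geq d+1$, the centroid $\mu$ itself lies on the correct side of the hyperplane and $t \leq 0$; when $g < d+1$, the hypothesis $g + \eta n \geq 4d$ forces $n$ to be large, which through the above Hessian identity yields a correspondingly large $\sqrt{(\vy - \vx)^\top \Sigma (\vy - \vx)}$ and gives $t \leq \eta$.

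The main obstacle I anticipate is this final algebraic step: the Hessian identity couples $g$ and $n$ in a somewhat asymmetric way, and one must verify $t \leq \eta$ uniformly in the regime $g < d+1$ (including when $g$ is negative). The constants $4$ in $4d$ and $1/4$ on $\eta$ are tuned precisely so that the resulting quadratic inequality in $g$ and $n$ closes out. Once $t^+ \leq \eta$ is established, combining with Grunbaum's volume lower bound and taking logs yields the desired inequality $\barrUniKNew(\vx) \leq \barrUniK(\vx) + \log(1 - 1/e + \eta)$.
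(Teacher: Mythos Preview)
Your proposal is correct and follows essentially the same route as the paper: reduce via \cref{lem:polarAfterAddingPoint} to a volume ratio on $(\kcal-\vx)^\circ$, then apply Gr\"unbaum together with the barrier calculus of \cref{lem:UniversalBarrierCalculus}, splitting on whether the centroid $\mu$ lies in $\h$ (equivalently, $g\ge d+1$ versus $g<d+1$). The only cosmetic difference is that the paper constructs an explicit point $\vz$ on the hyperplane and bounds $\|\vz-\mu\|_{\Sigma^{-1}}$ via \cref{cor:GrunbaumVolApprox}, whereas you compute the scalar $t$ of \cref{thm:GrunbaumGeneral} directly; these quantities coincide, so the arguments are the same.
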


\begin{proof}
By \cref{lem:polarAfterAddingPoint}, we have that 
\[
\left(\conv(\kcal,\vy) -\vx\right)^{\circ}\subseteq(\kcal-\vx)^{\circ}\cap\h,
\]
where $\h=\left\{ \vz\in\Rn:\inprod{\vz}{\vy-\vx}\leq1\right\} $.
Our strategy to computing the deviation of $\barrUniKNew(\vx)\defeq\barr_{\conv( \kcal,\vy) }(\vx)=\log\vol(\conv(\kcal,\vy) -\vx)^{\circ}$
from $\barrUniK(\vx)$ is to compute the change in $\vol(\conv(\kcal,\vy) -\vx)^{\circ}\leq\vol\left[(\kcal-\vx)^{\circ}\cap\h\right]$
from $\vol(\kcal-\vx)^{\circ},$ for which it is immediate that one may
apply an appropriate form of Gr\"unbaum's Theorem. 

Let $\mu$ be the center of gravity of the body $(\kcal-\vx)^{\circ}$.
If $\mu \notin \h$, then \cref{cor:GrunbaumVolApprox}
(with $\eta=0$) gives 
\[
\vol\left[(\kcal-\vx)^{\circ}\cap\h\right]\leq\vol(\kcal-\vx)^{\circ}\cdot(1-1/e),
\]
and taking the logarithm on both sides gives the claimed bound. We
now consider the case in which $\mu\in\h$, and the variance matrix
of the body $(\kcal-\vx)^{\circ}$ is $\Sigma$. Define $\vv = \vy - \vx$, and consider the point 
\[
\vz=\mu+\frac{1-\inprod{\vv}{\mu}}{\|\vv\|_{\Sigma}^{2}}\cdot\Sigma\vv.
\]
This point satisfies $\inprod{\vv}{\vz}=1$, which implies $\vz\in\h.$
Specifically, $\vz$ lies on the separating hyperplane. We show that
$\vz$ is sufficiently close to $\mu,$ so that even though $\mu\in\h,$
the subset of $(\kcal-\vx)^{\circ}$ cut out by the halfspace $\h$ is
not too large. By applying \cref{lem:UniversalBarrierCalculus} to
compute $\|\vv\|_{\vx}^{2}=(d+1)(d+2)\|\vv\|_{\Sigma}^{2}+(d+1)\inprod{\vv}{\mu}^{2}$,
we may compute the following quantity. 
\begin{align}
\|\vz-\mu\|_{\Sigma^{-1}} & =\frac{1-\inprod{\vv}{\mu}}{\sqrt{\frac{1}{(d+1)(d+2)}\|\vv\|_{\vx}^{2}-\frac{1}{d+2}\cdot\inprod{\vv}{\mu}^{2}}}  \nonumber \\
& =\sqrt{(d+1)(d+2)}\cdot\frac{1-\inprod{\vv}{\mu}}{\sqrt{\frac{1}{2}\|\vv\|_{\vx}^{2}+\frac{1}{2}\|\vv\|_{\vx}^{2}-(d+1)\inprod{\vv}{\mu}^{2}}}.\label{eq:uni-barr-pot-1}
\end{align}
Applying the expression for gradient from \cref{lem:UniversalBarrierCalculus}
in \cref{eq:violatedDistCond}, we have 
\begin{align}
\eta\|\vv\|_{\vx} & \geq 4 d-(d+1)\inprod{\vv}{\mu}\label{eq:uni-barr-pot-2}
  \geq 2d\inprod{\vv}{\mu},\nonumber 
\end{align}
where we used the fact that $\mu\in\h$ implies $\inprod{\vv}{\mu}\leq1.$
Since $\eta\leq1/4$, we have $ \frac{1}{2}\|\vv\|_{\vx}^{2}\geq(d+1)\inprod{\vv}{\mu}^{2}$. Plugging this in \cref{eq:uni-barr-pot-1} gives 
\begin{align*}
\|\vz-\mu\|_{\ma^{-1}} & \leq\sqrt{(d+1)(d+2)}\cdot\frac{1-\inprod{\vv}{\mu}}{\sqrt{\frac{1}{2}\|\vv\|_{\vx}^{2}}}
\leq 4d \frac{1-\inprod{\vv}{\mu}}{\|\vv\|_{\vx}} \\
& \leq 4 d \cdot\frac{1-\inprod{\vv}{\mu}}{4d (1-\inprod{\vv}{\mu}) / \eta}\leq\eta,
\end{align*}
which implies \cref{cor:GrunbaumVolApprox} applies,
giving us the desired volume reduction. 
\end{proof}

\subsection{Potential change for the update of \texorpdfstring{$\vx$}{x} \label{sec:IPM-analysis} }

In this section, we quantify the amount of progress made in \cref{line:IPM-step}
of \cref{alg:decomposable-fn-min} by computing the change in the potential
$\pot$ as defined in \cref{eq:TotalPotential}.
\begin{lem}\label[lem]{lem:potChangeX}
Consider the potential $\pot$ \cref{eq:TotalPotential} and the update step $\delta_{\vx}=\frac{\eta}{2}\cdot\frac{\vxos-\vx}{\|\vxos-\vx\|_{\vx,1}}$
as in \cref{line:IPM-step}. Assume the guarantees in \cref{eq:Condition2}
and \cref{eq:Condition1}. Then the potential $\pot$ incurs the following
minimum decrease. 
\[
\potnew\leq\pot-\frac{\eta^{2}}{4}.
\]
\end{lem}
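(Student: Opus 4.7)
The plan is to expand $\potnew - \pot$ term by term. Since the entropic integral $\log\int_{\kouthat}\exp(-t\inprod{\vc}{\vu})\,d\vu$ does not depend on $\vx$ and $t$ is held fixed in this step, only the linear term $t\inprod{\vc}{\vx}$ and the universal barrier terms $\sum_i \barrini(\vxi)$ contribute. Writing $\delta_{\vx,i}$ for the $i$-th block of $\delta_{\vx}$, I would apply \cref{thm:QuadApproxErr} to each $\barrini$, which is legal because
\[
\|\delta_{\vx,i}\|_{\vxi} \;\leq\; \sum_{j=1}^n \|\delta_{\vx,j}\|_{\vx_j} \;=\; \|\delta_{\vx}\|_{\vx,1} \;=\; \tfrac{\eta}{2} \;<\; 1.
\]
This gives
\[
\potnew - \pot \;\leq\; \underbrace{t\inprod{\vc}{\delta_{\vx}} + \sum_{i=1}^n \inprod{\nabla\barrini(\vxi)}{\delta_{\vx,i}}}_{\text{first-order}} \;+\; \tfrac{1}{2}\sum_{i=1}^n \|\delta_{\vx,i}\|_{\vxi}^2 \;+\; \sum_{i=1}^n \tfrac{\|\delta_{\vx,i}\|_{\vxi}^3}{3(1-\|\delta_{\vx,i}\|_{\vxi})}.
\]

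Next I would bound the first-order term, which is the heart of the argument. Using the explicit form $\delta_{\vx}=\tfrac{\eta}{2}\cdot(\vxos-\vx)/\|\vxos-\vx\|_{\vx,1}$, the first-order term factors as
\[
\frac{\eta/2}{\|\vxos-\vx\|_{\vx,1}}\Bigl[t\inprod{\vc}{\vxos-\vx}+\sum_{i=1}^n\inprod{\nabla\barrini(\vxi)}{\vxosi-\vxi}\Bigr].
\]
The bracket is controlled by the two conditions maintained by the algorithm: \cref{eq:Condition1} gives $t\inprod{\vc}{\vxos-\vx}\leq -4m$, while summing \cref{eq:Condition2} over $i$ yields $\sum_i \inprod{\nabla\barrini(\vxi)}{\vxosi-\vxi}\leq 4m-\eta\|\vxos-\vx\|_{\vx,1}$. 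Adding these, the $\pm 4m$ cancel, leaving $\leq -\eta\|\vxos-\vx\|_{\vx,1}$. Multiplying by the prefactor gives a clean bound of $-\eta^2/2$ on the first-order term.

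Finally I would bound the higher-order remainder. Since all $\|\delta_{\vx,i}\|_{\vxi}\geq 0$ and $\sum_i \|\delta_{\vx,i}\|_{\vxi}=\eta/2$, the elementary inequality $\sum_i a_i^2 \leq (\sum_i a_i)^2$ yields $\tfrac{1}{2}\sum_i \|\delta_{\vx,i}\|_{\vxi}^2 \leq \eta^2/8$, and the cubic remainder is bounded analogously by $O(\eta^3)$, which is absorbed into the $\eta^2/8$ slack for $\eta=1/100$. Combining,
\[
\potnew - \pot \;\leq\; -\tfrac{\eta^2}{2} + \tfrac{\eta^2}{8} + O(\eta^3) \;\leq\; -\tfrac{\eta^2}{4},
\]
as claimed. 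The only mildly delicate step is the bookkeeping that turns the mixed $\|\cdot\|_{\vx,1}$ normalization into clean blockwise sums; the key algebraic observation is that the normalization $1/\|\vxos-\vx\|_{\vx,1}$ is exactly what is needed to cancel the $-\eta\|\vxos-\vx\|_{\vx,1}$ produced by \cref{eq:Condition2}, so the two conditions are tuned precisely so that the linear term dominates the quadratic one.
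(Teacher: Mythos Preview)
Your proposal is correct and follows essentially the same approach as the paper: both expand $\potnew-\pot$, apply \cref{thm:QuadApproxErr} blockwise to the universal barrier terms, use \cref{eq:Condition1} and \cref{eq:Condition2} to bound the first-order part by $-\eta^2/2$, and control the remainder via $\sum_i \|\delta_{\vx,i}\|_{\vxi}^2 \leq \bigl(\sum_i \|\delta_{\vx,i}\|_{\vxi}\bigr)^2 = \eta^2/4$. The only cosmetic difference is that the paper collapses the quadratic and cubic self-concordance remainder into a single $\|\delta_{\vx,i}\|_{\vxi}^2$ bound (valid since $\|\delta_{\vx,i}\|_{\vxi}\leq \eta/2 \leq 1/4$) to get exactly $\eta^2/4$, whereas you keep them separate and obtain $\eta^2/8 + O(\eta^3)$; both routes yield the claimed $-\eta^2/4$.
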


\begin{proof}
Taking the gradient of $\pot$ with respect to $\vx$ and rearranging
the terms gives
\begin{equation}
t\vc=\nabla_{\vx}\pot-\sum_{i=1}^{n}\nabla\barrini(\vxi).\label{eq:potential-ipm-3}
\end{equation}
By applying the expression for $t\vc$ from the preceding equation,
we get 
\begin{align}
\potnew-\pot & =t\inprod{\vc}{\vx+\delta_{\vx}}+\sum_{i=1}^{n}\barrini(\vxi+\delta_{\vx,i})-t\inprod{\vc}{\vx}-\sum_{i=1}^{n}\barrini(\vxi)\nonumber \\
 & =\langle\nabla_{\vx}\pot,\delta_{\vx}\rangle+\sum_{i=1}^{n}\underbrace{\left[\barrini(\vxi+\deltaXi)-\barrini(\vxi)-\inprod{\nabla\barrini(\vxi)}{\deltaXi}\right]}_{q_{\barrini}(\vxi)}.\label{eq:potential-ipm-2}
\end{align}
The term $q_{\barrini}(\vxi)$ measures the error due to first-order
approximation of $\barrini$ around $\vxi$. Since $\barrini(\vxi)$ is self-concordant functions and $\|\delta_{\vx,i}\|_{\vx_{i}}\leq\|\delta_{\vx}\|_{\vx,1}\leq\eta\leq1/4$, 
\cref{thm:QuadApproxErr} shows that

\begin{equation}
\barrini(\vxi+\deltaXi)-\barrini(\vxi)-\inprod{\nabla\barrini(\vxi)}{\deltaXi}\leq\|\delta_{\vx,i}\|_{\vx,i}^{2}.\label[ineq]{eq:potential-ipm-1}
\end{equation}
Plugging in \cref{eq:potential-ipm-1}
into \cref{eq:potential-ipm-2}, we get 
\begin{equation}
\potnew-\pot \leq\langle\nabla_{\vx}\pot,\delta_{\vx}\rangle+\|\delta_{\vx}\|_{\vx,1}^{2}.\label[ineq]{eq:potential-ipm-4}
\end{equation}
We now bound the two terms on the right hand side one at a time. Using
the definition of $\deltaX$ (as given in the statement of the lemma)
and of $\nabla_{\vx}\pot$ from \cref{eq:potential-ipm-3} gives
\begin{align}
\langle\nabla_{\vx}\pot,\delta_{\vx}\rangle & =\frac{\eta}{2}\frac{1}{\|\vxos-\vx\|_{\vx,1}}\langle\nabla_{\vx}\pot,\vxos-\vx\rangle\nonumber \\
 & =\frac{\eta}{2}\frac{1}{\|\vxos-\vx\|_{\vx,1}}\left[\inprod{t\vc}{\vxos-\vx}+\sum_{i=1}^{n}\inprod{\nabla\barrini(\vxi)}{\vx_{\textrm{out},i}^{\star}-\vxi}\right]\nonumber \\
 & \leq\frac{\eta}{2}\frac{1}{\|\vxos-\vx\|_{\vx,1}}\left[\inprod{t\vc}{\vxos-\vx}+\sum_{i=1}^{n}\left(4 r_{i}-\eta\|\vxosi-\vxi\|_{\vx_{i}}\right)\right]\nonumber \\
 & =\frac{\eta}{2}\frac{1}{\|\vxos-\vx\|_{\vx,1}}\left[\inprod{t\vc}{\vxos-\vx}+4m-\eta\|\vxos-\vx\|_{\vx,1}\right]\nonumber \\
 & \leq\frac{\eta}{2}\frac{1}{\|\vxos-\vx\|_{\vx,1}}\cdot\left(-\eta\|\vxos-\vx\|_{\vx,1}\right)\nonumber \\
 & =-\eta^{2}/2.\label[ineq]{eq:pot-ipm-5}
\end{align}
where the third step follows from \cref{eq:Condition2}, the fourth
step follows from $\sum_{i=1}^{n}d_{i}=m$, and the fifth step follows
from \cref{eq:Condition1}. To bound the second term, we note from
\cref{line:IPM-step} that 
\begin{equation}
\|\delta_{\vx}\|_{\vx,1}^{2}=\left(\frac{\eta}{2}\cdot\frac{\|\vxos-\vx\|_{\vx,1}}{\|\vxos-\vx\|_{\vx,1}}\right)^{2}=\eta^{2}/4.\label{eq:pot-ipm-6}
\end{equation}
Hence, we may plug in \cref{eq:pot-ipm-5} and \cref{eq:pot-ipm-6}
into \cref{eq:potential-ipm-4} to get the desired result. 
\end{proof}

\subsection{Total oracle complexity}\label{sec:total-complexity}
Before we bound the total oracle complexity of the algorithm, we first bound the total potential change throughout the algorithm.

\begin{lem}\label{lem:potential-change}
Consider the potential function $\pot=t\inprod{\vc}{\vx}+\log\left[\int_{\kouthat}\exp(-t\inprod{\vc}{\vu})d\vu\right]+\sum_{i\in[n]}\barrini(\vxi)$ as defined in \cref{eq:TotalPotential} associated with \cref{alg:decomposable-fn-min}. Let $\potinit$ be the potential at $t=\tinit$ of this algorithm, and let $\potend$ be the potential at $t = \tend$. Suppose at $t=\tinit$ in \cref{alg:decomposable-fn-min}, we have $\ball_m(\vx,\bar r)\subseteq \kin$ with $\bar r = r / \operatorname{poly}(m)$ and  $\kout\subseteq\ball_m(0,\bar R)$ with $\bar R = O(\sqrt nR)$. Then we have, under the assumptions of \cref{thm:MainThmOfKiProblem}, that \[
\potinit - \potend \leq O\left(m\log\left(\frac{m R}{\epsilon r}\right)\right).
\]
\end{lem}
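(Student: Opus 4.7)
The plan is to bound $\potinit$ from above and $\potend$ from below separately, then subtract. The upper bound on $\potinit$ is straightforward and is obtained by bounding each of the three summands in \cref{eq:TotalPotential} at $t=\tinit$, using only the given containments $\ball_m(\vx,\bar r)\subseteq\kin$ and $\kout\subseteq\ball_m(0,\bar R)$. The lower bound on $\potend$ is the technical heart of the proof and relies on the Fenchel/entropic duality identity
$$\log\int_{\kouthat}\exp(-t\inprod{\vc}{\vu})\,d\vu \;=\; -\min_{\vu\in\kouthat}\bigl[t\inprod{\vc}{\vu}+\barrout(\vu)\bigr],$$
which was used already in \cref{lem:PotChangeT}.

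For the upper bound on $\potinit$, I would bound the three terms as follows. The linear term satisfies $\tinit|\inprod{\vc}{\vx}|\le \tinit\|\vc\|_2\bar R=O(m\log m)$ by the choice of $\tinit$ and $\bar R=O(\sqrt nR)$. The entropic integral is bounded crudely by $\log[\vol(\kouthat^{\mathrm{init}})\cdot\max_{\vu\in\kouthat^{\mathrm{init}}}\exp(-\tinit\inprod{\vc}{\vu})]\le m\log\bar R+\tinit\|\vc\|_2\bar R+O(m)=O(m\log(mR))$, using $\vol(\ball_m(0,\bar R))\le O(\bar R^m)$ via \cref{fact:volSphereCone}. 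The universal barrier term is bounded using \cref{def:UniversalBarr}: since $\ball_m(\vx,\bar r)\subseteq\kin$, the polar body $(\kini-\vxi)^\circ$ is contained in a Euclidean ball of radius $1/\bar r$, so $\barrini(\vxi)\le d_i\log(1/\bar r)+O(d_i)$, and summing over $i$ gives $O(m\log(m/r))$. Altogether $\potinit\le O\bigl(m\log(mR/r)\bigr)$.

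For the lower bound on $\potend$, I would use the duality identity to rewrite
$$\potend \;=\; \Bigl[\tend\inprod{\vc}{\vx}+\sum_i\barrini(\vxi)\Bigr]\;-\;\min_{\vu\in\kouthat^{\mathrm{end}}}\Bigl[\tend\inprod{\vc}{\vu}+\barrout(\vu)\Bigr].$$
At termination the first bracket equals $\min_{\vx\in\kinhat^{\mathrm{end}}}[\tend\inprod{\vc}{\vx}+\sum_i\barrini(\vxi)]$, so $\potend$ is exactly the difference of two central-path values, one over $\kinhat^{\mathrm{end}}$ with barrier $\sum_i\barrini$ of self-concordance $m$, the other over $\kouthat^{\mathrm{end}}$ with the entropic barrier (also $m$-self-concordant by \cref{thm:chewi}). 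Standard IPM theory via \cref{thm:sc1} and \cref{lem:EtChange} gives $\Et{\barr}{t}=t\cdot\mathrm{OPT}_K+O(m\log t)$ for the central-path value over any convex body $K$, uniformly over the choice of self-concordant barrier; applying this to both minima and using $\kin^{\mathrm{end}}\subseteq\kcal\subseteq\kout^{\mathrm{end}}$ together with the algorithm's stopping gap $\inprod{\vc}{\vx}-\inprod{\vc}{\vxos}\le 4m/\tend$ (from the check on \cref{line:updateT}), the difference is controlled by $-O(m\log\tend)=-O(m\log(m/\epsilon))$, giving $\potend\ge -O(m\log(mR/\epsilon))$.

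Subtracting the two bounds yields $\potinit-\potend\le O(m\log(mR/r))+O(m\log(mR/\epsilon))=O(m\log(mR/(\epsilon r)))$, as claimed. The main obstacle I anticipate is the lower bound on $\potend$: although the entropic barrier at $\vxos^{\mathrm{end}}$ and the universal barriers at $\vxi^{\mathrm{end}}$ can individually blow up as these points approach the boundaries of $\kouthat^{\mathrm{end}}$ and $\kini^{\mathrm{end}}$, respectively, their combination with the linear terms must cancel to leading order. Making this cancellation quantitative requires treating the two central-path values together via duality rather than estimating the barriers pointwise, which is the reason for invoking \cref{lem:EtChange} and the self-concordance complexity $\nu=m$ symmetrically on both sides.
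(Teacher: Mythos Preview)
Your upper bound on $\potinit$ is fine and matches the paper's argument essentially verbatim: bound the log-integral by $\log\vol(\kouthat)+\tinit\|\vc\|_2\bar R$, bound each $\barrini(\vxi)$ using $\ball(\vxi,\bar r)\subseteq\kini$ so that the polar body sits in $\ball(0,1/\bar r)$, and use the choice of $\tinit$ to make the linear term $O(m\log m)$.

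The lower bound on $\potend$, however, has a genuine gap. Your key claim that ``standard IPM theory gives $\Et{\barr}{t}=t\cdot\mathrm{OPT}_K+O(m\log t)$ uniformly over the choice of body and barrier'' is false as stated. Integrating \cref{lem:EtChange} gives $\Et{\barr}{t}=\Et{\barr}{t_0}+(t-t_0)\,\mathrm{OPT}_K+O(m\log(t/t_0))$, and the additive constant $\Et{\barr}{t_0}$ depends on the body: for the entropic barrier it is essentially $-\log\vol(\kouthat^{\mathrm{end}})$, and for the universal barriers it is the minimum of $\sum_i\barrini^{\mathrm{end}}$. Neither of these is controlled by \cref{thm:sc1} or \cref{lem:EtChange} alone, and they do \emph{not} cancel when you take the difference, since $\kinhat^{\mathrm{end}}$ and $\kouthat^{\mathrm{end}}$ are different bodies with different barriers. (A quick sanity check: scaling a body by $\lambda$ shifts the universal barrier by $-d\log\lambda$, which is not absorbed into $O(m\log t)$.) The stopping gap $\inprod{\vc}{\vx}-\inprod{\vc}{\vxos}\le 4m/\tend$ helps with the $\mathrm{OPT}$ terms but does nothing for these volume-type constants. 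Also note that the first bracket is only $\geq$ the inner central-path value, not equal to it, since $\potend$ is evaluated at the current iterate $\vx$, not at the returned $\arg\min$; this inequality goes the right way but your text asserts equality.

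The paper avoids this difficulty by a different grouping and by keeping the argument entirely geometric. It bounds the entropic piece $\tend\inprod{\vc}{\vx}+\log\int_{\kouthat^{\mathrm{end}}}e^{-\tend\inprod{\vc}{\vu}}d\vu$ as a whole: writing $\alpha=\min_{\vu\in\kouthat^{\mathrm{end}}}\inprod{\vc}{\vu}$ and using $\inprod{\vc}{\vx}\geq\alpha$, it constructs a cone $\mathcal{C}$ from the minimizer $\bar\vx$ through the inner ball $\ball(\vz,\bar r)\subseteq\kouthat^{\mathrm{end}}$, intersects with the slab $\mathcal{H}=\{\inprod{\vc}{\vu}\leq\alpha+1/\tend\}$, and lower bounds the integral by $e^{-\tend\alpha-1}\vol(\mathcal{C}\cap\mathcal{H})$; the $\tend\alpha$'s cancel exactly, leaving $-O(m\log(\bar R\tend\|\vc\|_2/\bar r))$. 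Separately, it lower bounds the universal terms $\sum_i\barrini^{\mathrm{end}}(\vxi)$ pointwise using only $\kini^{\mathrm{end}}\subseteq\ball(0,\bar R)$, so that each polar body contains a ball of radius $\Omega(1/\bar R)$. Neither step needs the stopping condition or any IPM central-path asymptotics; both are volume computations via \cref{fact:volSphereCone}. To fix your argument you would in effect have to reproduce the cone construction to control $\log\vol(\kouthat^{\mathrm{end}})$ near the optimum, at which point the duality detour through $\Et{\barr}{t}$ is no longer buying you anything.
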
 
\begin{proof} For this proof, we introduce the following notation: let $\vol_{\ma}(\cdot)$ denote the volume restricted to the subspace $\{\vx:\ma\vx=\vb\}$. We also invoke \cref{fact:volSphereCone}. We now bound the change in the potential term by term, starting with the entropic terms 
\[ t\inprod{\vc}{\vx} + \log\left[\int_{\kouthat}\exp(-t\inprod{\vc}{\vu})d\vu\right]\numberthis\label{eq:RemainingTermPotential}\] 
at $t= \tinit$ and a lower bound on it at $t= \tend$. We start with bounding \cref{eq:RemainingTermPotential} evaluated at $t= \tend = \frac{8m}{\epsilon\|\vc\|_2R}$. 

Let $\bar \vx=\arg\min_{\vx\in \kouthat} \inprod{\vc}{\vx}$ and $\alpha = \langle \vc,  \bar \vx\rangle$. By optimality of $\bar \vx$, we know that $\bar\vx \in \partial \kouthat$. Denote $\ball_A(\vz,\bar r)$ to be $\ball(\vz,\bar r)$ restricted to the subspace $\{\vx:\ma\vx=\vb\}$. Note that $\kouthat\supseteq\ball_{\ma}(\vz,\bar r)$. Consider the cone $\mathcal{C}$ and halfspace $\mathcal{H}$ defined by \[\mathcal{C} = \bar \vx + \left\{\lambda \vy: \lambda>0, \vy \in  \ball_{\ma}(\vz-\bar \vx, \bar r)  \right\}\text{ and }
    \mathcal{H}\defeq \left\{\vx:\langle \vc, \vx\rangle \leq \alpha +\frac{1}{\tend}\right\}.
\]
Then, by a similarity argument, we note that $\mathcal{C}\cap \mathcal{H}$ contains a cone with height $\frac{1}{\tend\|c\|_2}$ and base radius $\frac{\bar r}{\bar R\tend\|c\|_2}$, which means
\begin{align*}
    \vol_{\ma}(\mathcal{C}\cap \mathcal{H}) &\geq \frac{1}{m - \operatorname{rank}(\ma)}\cdot\frac{1}{\tend\|c\|_2}\cdot  \left(\frac{\bar r}{\bar R\tend\|c\|_2}\right)^{m-\operatorname{rank}(\ma)-1} \cdot \vol(\ball_{m-\operatorname{rank}(\ma)-1}(0,1)).
    \end{align*}
Then, we have
\begin{align*}
 \log\left[\int_{\kouthat}\exp(-\tend \inprod{\vc}{\vu})d\vu\right] + \tend \inprod{\vc}{\vx}
    &\geq \log \left[\int_{\kouthat} \exp(-\tend \inprod{\vc}{\vu}) d\vu \right] + \tend \min_{\vx\in \kouthat} \inprod{\vc}{\vx}  \\
    &\geq \log \left[\int_{\mathcal{C}\cap \mathcal{H}} \exp(-\tend \inprod{\vc}{\vu}) d\vu \right] + \tend\alpha \\
    &\geq \log \left[\int_{\mathcal{C}\cap \mathcal{H}} \exp(-\tend\alpha -1) d\vu \right] + \tend \alpha \\ 
    &=\log\left[\frac{1}{e}\cdot \vol_{\ma}(\mathcal{C}\cap \mathcal{H}) \exp(-\tend\alpha ) \right] + \tend \alpha \\ 
    &= \log\left[\vol_{\ma}(\mathcal{C}\cap \mathcal{H})\cdot \frac{1}{e}\right] \\
    &\geq - (m-\operatorname{rank}(\ma)-1) \cdot\log({\bar R\tend \|c\|_2}/{\bar r}))\\
    &\quad +  \log(\vol(\ball_{m-\operatorname{rank}(\ma)-1}(0,1))) \\
    &\quad - \log (m - \operatorname{rank}(\ma)) - \log (\tend\|\vc\|_2) -1. \numberthis\label[ineq]{eq:PhiEnd1}
    \\    \end{align*}

Next, to bound \cref{eq:RemainingTermPotential} at $t=\tinit$, we may express these terms as follows. 
\begin{align*}
   \log\left[\int_{\kouthat}\exp(-\tinit \cdot \inprod{\vc}{\vu})d\vu\right] +  \tinit \cdot \inprod{\vc}{\vx}
    \\    \\    &\leq\log\left[\vol_{\ma}(\kouthat) \right]+\tinit\cdot\max_{\vu\in\kouthat}\left\langle \vc,\vx-\vu\right\rangle\\
    &\leq\log(\vol(\ball_{m-\operatorname{rank}(\ma)}(0, \bar R)))+\tinit\cdot2\bar R\|\vc\|_{2}\\
    &\leq \log(\vol(\ball_{m-\operatorname{rank}(\ma)}(0, 1))) \\
        & \quad +(m-\operatorname{rank}(\ma)) \log{\bar R} + O(m\log{m}), \numberthis\label[ineq]{eq:PhiInit1}
\end{align*}
where  the second step is by $\kouthat\subseteq \kout\subseteq \ball_{\sum_{i\in[n]} d_i}(0, \bar R)$ (here, the second inclusion is by assumption), and the third step is by $\vol(\ball_{q}(0, \bar R))= \frac{\pi^{q/2}}{\Gamma(1+ q/2)} {\bar R}^q$ and our choice of  $\tinit \defeq  \frac{m\log{m}}{\sqrt{n} \|\vc\|_2 R}.$

We now compute the change in the entropic barrier $\sum_{i \in [n]} \barrini(\vxi)$, where \[\barrini(\vxi) = \log \vol(\kini^\circ(\vxi)).\] Define $\ball_{d}(0, r)$ to be the $d$-dimensional Euclidean ball centred at the origin and with radius $r$. We note by the radius assumption of \cref{thm:MainThmOfKiProblem} that  $\kini\subseteq\ki \subseteq \ball_{d_i}(0, \bar R)$ throughout the algorithm. By the assumption made in this lemma's statement, we have that at the start of \cref{alg:decomposable-fn-min},  $\kini\supseteq\ball_{d_i}(\vx, \bar r)$. These give us the following bounds.   
\[
\barrini^{\text{end}}(\vxi)\geq \log(\vol(\ball^\circ_{d_i}(0,\bar R)) \text{ and }  \barrini^{\text{init}}(\vxi) \leq  \log(\vol(\ball_{d_i}^{\circ}(\vxi, \bar r))). 
\]
Applying the fact that $\vol(\ball_d(0, r)) \propto r^d$ and summing over all $i \in [n]$ gives 
\begin{align*}
    \sum_{i \in [n]} \left[\barrini^{\text{init}}(\vxi) -\barrini^{\text{end}}(\vxi)\right] 
   \\    &\leq \sum_{i \in [n]}\log\left(\frac{\vol(\ball_{d_i}(\vxi, 1/\bar r))}{\vol(\ball_{d_i}(0, 1/\bar R))}\right) \\
    &=\sum_{i \in [n]} d_i\log(\bar R/\bar r) = m \log (\bar R/\bar r). \numberthis\label[ineq]{eq:PsiChangeInitEnd}
\end{align*}

Combining \cref{eq:PhiInit1}, \cref{eq:PhiEnd1}, and \cref{eq:PsiChangeInitEnd}, we have 
\begin{align*}
\potinit - \potend &\leq m\log(mR/r) \\
&\quad +\left[\log(\vol(\ball_{m-\operatorname{rank}(\ma)}(0, 1)))
        +(m-\operatorname{rank}(\ma)) \log{\bar R} + O(m\log{m})\right]\\
& \quad+ (m-\operatorname{rank}(\ma)-1) \cdot\log({\bar R\tend \|c\|_2}/{\bar r}) - \log(\vol(\ball_{m-\operatorname{rank}(\ma)-1}(0,1))) \\
& \quad + \log (m - \operatorname{rank}(\ma)) + \log (\tend\|\vc\|_2) + 1 \\
&\leq m\log(mR/\epsilon r) \\
& \quad + O(m \log m) \\ 
&\quad + O((m - \operatorname{rank}(\ma))\log(mR/\epsilon r)) \leq O(m \log (mR/\epsilon r)). 
\end{align*}
\qedhere 
\end{proof}

\begin{restatable}{lem}{lemAlgOracleComplexity}[Total oracle complexity]\label[lem]{lem:totalOracleComplexity} Suppose the inputs $\kin$ and $\kout$ to \cref{alg:decomposable-fn-min} satisfy $\kout\subseteq\ball_m(0,\bar{R})$ with $\bar R = O(\sqrt nR)$ and $\kin \supseteq \ball(\vz, \bar r)$ with $\bar r = r / \operatorname{poly}(m)$. Then, when \cref{alg:decomposable-fn-min} terminates at $t \geq \tend$, it outputs a solution $\vx$ that satisfies 
\[
\vc^\top \vx \leq \min_{\vx\in\kcal,A\vx=\vb} \vc^\top\vx+  \epsilon\cdot \|\vc\|_2R
\]
using at most $\nsep = O\left( m \log\left( \frac{mR}{\epsilon r}\right)\right)$ separation oracle calls.
\end{restatable}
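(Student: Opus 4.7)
The plan is to use the global potential function $\pot$ from \cref{eq:TotalPotential} as the accounting device, combined with the per-step potential changes already established. I would classify each iteration of the main loop by which branch it takes: a $t$-update (\cref{line:updateT}), a separation-oracle call that either grows $\kini$ (\cref{line:KinUpdated}) or shrinks $\kouti$ (\cref{line:KoutUpdated}), or a Newton step on $\vx$ (\cref{line:MoveX}). By \cref{lem:PotChangeT} each $t$-update increases $\pot$ by at most $\eta+\eta^2$; by \cref{lem:PotKOutChange} and \cref{lem:UniversalBarrierPotentialChange} each oracle call decreases $\pot$ by at least $c := -\log(1-1/e+\eta) = \Omega(1)$ (using $\eta \leq 1/4$); and by \cref{lem:potChangeX} each Newton step decreases $\pot$ by at least $\eta^2/4$.

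Next, I would bound the number $N_t$ of $t$-updates. Since $t$ is scaled by $1+\eta/(4m)$ each time and grows from $\tinit = (m\log m)/(\sqrt n\,\|\vc\|_2 R)$ to at most $\tend$, we obtain
\[
N_t \leq \frac{\log(\tend/\tinit)}{\log(1+\eta/(4m))} = O\!\left(\frac{m}{\eta}\,\log\frac{\sqrt n}{\epsilon\log m}\right) = O\!\left(m\log\frac{m}{\epsilon}\right).
\]
Summing the per-step changes over the entire run,
\[
\potend \leq \potinit + N_t(\eta+\eta^2) - c\,\nsep - (\eta^2/4)\,N_x,
\]
where $N_x$ is the number of Newton steps. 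Dropping the nonpositive last term and solving for $\nsep$ gives $\nsep \leq c^{-1}\bigl(\potinit - \potend + N_t(\eta+\eta^2)\bigr)$, which by \cref{lem:potential-change} (which provides $\potinit - \potend = O(m\log(mR/(\epsilon r)))$) together with the bound on $N_t$ yields the desired $\nsep = O(m\log(mR/(\epsilon r)))$.

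For the optimality guarantee, I would combine the exit condition \cref{eq:Condition1} with standard self-concordance arguments. On termination, $\inprod{\vc}{\vx}\leq \inprod{\vc}{\vxos}+4m/t$. Because $\vxos$ minimizes the entropic IPM objective on $\kouthat$ and $\kcal\cap\{\vu:\ma\vu=\vb\}\subseteq \kouthat$, \cref{thm:sc1} together with the $m$-self-concordance of $\barrout$ (\cref{thm:chewi}) yields $\inprod{\vc}{\vxos}\leq \min_{\vu\in\kcal,\,\ma\vu=\vb}\inprod{\vc}{\vu}+m/t$. The returned point $\vx^{\mathrm{ret}}$ is the exact minimizer of $t\inprod{\vc}{\vu}+\sum_i\barrini(\vu_i)$ over $\kin\cap\{\vu:\ma\vu=\vb\}$; since $\kin\subseteq\kcal$, it is feasible, and an analogous self-concordance argument with the inner barrier $\sum_i\barrini$ gives $\inprod{\vc}{\vx^{\mathrm{ret}}}\leq \inprod{\vc}{\vx}+O(m/t)$. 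Substituting $t\geq \tend = 8m/(\epsilon\|\vc\|_2 R)$ then produces the claimed $\epsilon\|\vc\|_2 R$ suboptimality.

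The main obstacle is the final optimality step: $\vx^{\mathrm{ret}}$ is the analytic center only for the inner IPM over $\kin$, and we must thread $\inprod{\vc}{\vx^{\mathrm{ret}}}$, $\inprod{\vc}{\vx}$, and $\inprod{\vc}{\vxos}$ together across the two coupled IPMs driven by the same parameter $t$. Making this rigorous requires invoking the radius hypothesis $\kin\supseteq\ball(\vz,\bar r)$ to keep $\sum_i\barrini$ nondegenerate with a uniform self-concordance constant throughout the run, and verifying that the iterative Newton updates keep $\vx$ close enough to $\vx^{\mathrm{ret}}$ (both near-minimizers of the $\vx$-dependent part of $\pot$) for the $O(m/t)$ gap to transfer.
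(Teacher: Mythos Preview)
Your proposal is correct and follows essentially the same route as the paper: classify iterations, sum the per-step potential changes from \cref{lem:PotKOutChange}, \cref{lem:PotChangeT}, \cref{lem:UniversalBarrierPotentialChange}, \cref{lem:potChangeX}, bound $N_t$ via the geometric growth of $t$, and invoke \cref{lem:potential-change} to close the telescoping bound on $\nsep$. The optimality chain $\vx^{\mathrm{ret}}\to\vx\to\vxos\to\min_{\kcal}$ is also the same.

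Your concluding worry, however, is overblown. The link $\inprod{\vc}{\vx^{\mathrm{ret}}}\leq\inprod{\vc}{\vx}+O(m/t)$ does \emph{not} require any closeness between $\vx$ and $\vx^{\mathrm{ret}}$ or any Newton-step tracking. The paper simply observes that $\vx$ is feasible for the inner problem ($\vx\in\kin$, $\ma\vx=\vb$), so $\min_{\vu\in\kin,\,\ma\vu=\vb}\inprod{\vc}{\vu}\leq\inprod{\vc}{\vx}$, and then applies the standard IPM gap (the right inequality of \cref{lem:two-sided-ineq}, or equivalently your \cref{thm:sc1} argument) to the universal barrier on $\kin$ to get $\inprod{\vc}{\vx^{\mathrm{ret}}}\leq\min_{\vu\in\kin,\,\ma\vu=\vb}\inprod{\vc}{\vu}+\nu/t$. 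No coupling of the two IPMs beyond sharing $t$ is needed.
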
 
\begin{proof}
 Let $\nt$ be the number of times $t$ is updated; $\nin$ the number of times $\kin$ is updated; $\nout$ the number of times $\kout$ is updated; $\nx$ the number of times $\vx$ is updated, and $\nn$ the total number of iterations of the \texttt{while} loop before termination of \cref{alg:decomposable-fn-min}. 
 Then, combining \cref{lem:PotKOutChange}, \cref{lem:PotChangeT}, \cref{lem:UniversalBarrierPotentialChange}, and \cref{lem:potChangeX} gives
\[ \potend\leq \potinit + \nout \cdot \log(1-1/e) + \nt\cdot (\eta + \eta^2) + \nin \cdot \log(1-1/e + \eta) + \nx\cdot \left(-\frac{\eta^2}{4}\right).\numberthis\label[ineq]{eq:totalPotChange}\] 
The initialization step of \cref{alg:decomposable-fn-min} chooses $\eta = 1/100$, $\tend = \frac{8 m }{\epsilon \|\vc\|_2 R}$, and $\tinit = \frac{m\log(m)}{\sqrt{n} \|\vc\|_2 R}$, and we always update $t$ by a multiplicative factor of $1+\frac{\eta}{4 m}$ (see \cref{line:updateT-Rule}); therefore, we have 
\[
\nt = O(m\log(mR/(\epsilon r)).
\] 
From \cref{alg:decomposable-fn-min}, the only times the separation oracle is invoked is when updating $\kin$ or $\kout$ in \cref{line:KinUpdated} and \cref{line:KoutUpdated}, respectively.  Therefore, the total separation oracle complexity is $\nsep = \nin + \nout$. Therefore, we have \[
 \nsep = \nin + \nout \leq O(1)\cdot \left[\potinit - \potend +\nt \right] =  O(m\log(mR/(\epsilon r))
 \]  This gives the claimed separation oracle complexity. 
 
 We now prove the guarantee on approximation. Let $\vx_{\text{output}}$ be the output of \cref{alg:decomposable-fn-min} and $\vx$ be the point which entered \cref{line:updateT} right before termination. Note that the termination of \cref{alg:decomposable-fn-min} implies, by  \cref{line:updateT}, that \[
 \vc^\top \vx_{\text{output}} \leq \vc^\top \vx + \frac{\nu}{\tend}\leq \vc^\top \vxos + \frac{4(n+m)}{\tend}  \leq \min_{\vx\in\kcal,A\vx=\vb} \vc^\top\vx+  \epsilon\cdot \|\vc\|_2 \cdot R
 \]
 where the first step is by the second inequality in \cref{lem:two-sided-ineq} (using the universal barrier) and the last step follows by our choice of $\tend$ and the definition of $\vxos$ and $\kout\supseteq \kcal$.
\end{proof}

\begin{thm}[Main theorem of \cref{eq:1main}]\label{thm:MainThmOfKiProblem}
Given the convex program
\[
\begin{array}{ll}
\mbox{minimize} & \inprod{\vc}{\vx}, \\
\mbox{subject to } & \vxi\in\ki\subseteq \R^{d_i + 1}\forall i\in[n],\\
& \ma \vx = \vb. 
\end{array}
\]
Denote $\kcal = \kcal_1 \times \kcal_2 \times \dotsc \times \kcal_n$. Assuming we have
\begin{itemize}
    \item outer radius $R$: For any $\vx_i\in \ki$, we have $\|\vx_i\|_2 \leq R$, and
    \item inner radius $r$: There exists a $\vz \in \R^d$ such that $\ma\vz=\vb$ and $\ball(\vz,r)\subset \kcal$,
\end{itemize}
then, for any $0<\epsilon<\frac12$, we can find a point $\vx \in \kcal$ satisfying $\ma \vx = \vb$ and
\[
\inprod{\vc}{\vx} \leq \min_{\substack{\vxi\in\ki\subseteq \R^{d_i + 1}\forall i\in[n],\\ \ma\vx=\vb}}\inprod{\vc}{\vx} + \epsilon \cdot \|\vc\|_2 \cdot R,
\] 
in $O(\poly(m \log(mR/\epsilon r)))$ time and using 
\[
O(m\log(mR/(\epsilon r))
\] gradient oracle calls, where $m = \sum_{i=1}^n d_i$.
\end{thm}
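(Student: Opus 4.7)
The plan is to reduce this theorem to \cref{lem:totalOracleComplexity} by exhibiting initial inputs $\kin$, $\kout$, and $\vx_{\text{initial}}$ that satisfy both the structural hypotheses of \cref{assumption:alg-assumption} and the geometric conditions $\kout \subseteq \ball_m(0, \bar R)$ with $\bar R = O(\sqrt{n}\,R)$ and $\kin \supseteq \ball(\vz, \bar r)$ with $\bar r = r / \poly(m)$ required by that lemma.

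For the outer set, I would simply set $\kouti = \ball(0, R) \subset \R^{d_i + 1}$ for each $i \in [n]$. The outer-radius hypothesis $\|\vxi\|_2 \leq R$ for all $\vxi \in \ki$ immediately gives $\ki \subseteq \kouti$, and the product $\kout$ then satisfies $\kout \subseteq \ball_m(0, \sqrt{n}\,R)$, matching the target $\bar R = O(\sqrt{n}\,R)$. For the inner set, I would invoke the inner-radius hypothesis to produce $\vz\in\R^d$ with $\ma\vz=\vb$ and $\ball(\vz,r) \subset \kcal$, and then inscribe in each $\ki$ a small set $\kini \supseteq \ball(\vz_i, \bar r)$ with $\bar r = r / \poly(m)$; this is exactly the task handled in \cref{sec:initial-ki}. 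The polynomial loss $\poly(m)$ arises because the joint ball $\ball(\vz, r)$ inside $\kcal = \kcal_1\times\cdots\times\kcal_n$ does not decompose directly into a product of $d_i$-dimensional balls of radius $r$ — one loses dimension-dependent factors when projecting onto each coordinate block. Setting $\vx_{\text{initial}} = \vz$ then gives a feasible point in $\kin$ satisfying $\ma \vx_{\text{initial}} = \vb$, completing the verification of \cref{assumption:alg-assumption}.

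With these initial objects in hand, \cref{lem:totalOracleComplexity} yields the gradient oracle complexity $O(m \log(mR/(\epsilon r)))$ and the approximation guarantee $\inprod{\vc}{\vx} \leq \min_{\vx \in \kcal,\; \ma\vx = \vb}\inprod{\vc}{\vx} + \epsilon \|\vc\|_2 R$ essentially verbatim. The only claim left to establish is the $\poly(m\log(mR/\epsilon r))$ runtime bound. The dominant per-iteration cost is computing (or rather approximating) $\vxos$ from \cref{eq:xoutstar-min}; by \cref{prop:EquivalenceOfCentroidAndMinimizer}, this coincides with the centroid of the exponential distribution $\exp(-t\inprod{\vc}{\vu})$ over $\kouthat$, expressed in \cref{eq:xoutstar-mean}. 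I would estimate this centroid, together with the gradients and Hessians of the universal barriers $\barrini$ (which by \cref{lem:UniversalBarrierCalculus} are moments of the polar body), via the polynomial-time convex-body sampling machinery of \cite{halv21}. The per-call cost is polynomial in $m$, $\log R$, $\log(1/r)$, and $\log(1/\epsilon)$.

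The hard part will not be the reduction itself but making the potential-function analysis of \cref{lem:totalOracleComplexity} \emph{robust} to the sampling errors. In particular, the conditions \cref{eq:Condition2} and \cref{eq:Condition1} that drive the update decisions must be checked using only approximate $\vxos$, approximate gradients $\nabla\barrini(\vxi)$, and approximate local norms; and the Newton step in \cref{line:IPM-step} must remain within the Dikin ball despite these errors. The plan is to allow slack parameters in \cref{eq:Condition2} and \cref{eq:Condition1} and re-run the potential calculations of \cref{lem:PotKOutChange,lem:PotChangeT,lem:UniversalBarrierPotentialChange,lem:potChangeX} with these slacks included, showing that choosing the sampling accuracy as $1/\poly(m \log(mR/\epsilon r))$ causes the additive errors accumulated over all $O(m\log(mR/\epsilon r))$ iterations to remain a lower-order term in the final potential bound, preserving both the oracle-call count and the final suboptimality.
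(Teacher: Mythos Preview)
Your reduction has a genuine gap in the initialization step. The inner-radius hypothesis asserts only the \emph{existence} of a point $\vz$ with $\ma\vz=\vb$ and $\ball(\vz,r)\subset\kcal$; it does not hand you $\vz$. The procedure in \cref{sec:initial-ki} (\cref{thm:initOne}) does not recover this particular $\vz$ either: it is applied to each $\ki$ \emph{separately} and returns some $\vz_i$ with $\ball(\vz_i,r/\poly(d_i))\subset\ki$, using $O(d_i\log(R/r))$ oracle calls. The concatenation $\vz=(\vz_1,\dots,\vz_n)$ is then deep in $\kcal$ but has no reason to satisfy $\ma\vz=\vb$, so setting $\vx_{\text{initial}}=\vz$ fails the feasibility requirement of \cref{assumption:alg-assumption}. (Running \cref{alg:InnerBallFinding} on the full set $\kcal\cap\{\ma\vx=\vb\}$ instead would cost up to $n$ individual oracle calls per membership test and blow the budget.) Incidentally, the $\poly(m)$ loss in $\bar r$ comes from the guarantee of \cref{thm:initOne}, not from any failure of a product ball to decompose: if $\ball(\vz,r)\subset\kcal$ then already $\ball(\vz_i,r)\subset\ki$ for every $i$.

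The paper closes this gap with the machinery of \cref{sec:initial-point-reduction}, which you do not invoke. It forms the \emph{modified} convex program of \cref{def:ModConvProg}, adjoining nonnegative slack blocks $\vx^{(2)},\vx^{(3)}$ with constraint $\ma(\vx^{(1)}+\vx^{(2)}-\vx^{(3)})=\vb$ and a large linear penalty $s\cdot\|\vc\|_2/\sqrt{m+n}$ on the slacks. Any deep point $\vz$ found by \cref{thm:initOne} can then be completed to a feasible initial point $(\vz,\vy^{+},\vy^{-})$ for the modified program by solving the linear system $\ma\vy=\vb-\ma\vz$. \cref{alg:decomposable-fn-min} is run on this augmented problem, and \cref{thm:initial-main} shows that for $s$ chosen on the order of $m^{2.5}R/(r\epsilon)$ the optimal slacks are so small that $\vx_{\text{in}}=\vx_t^{(1)}+\vx_t^{(2)}-\vx_t^{(3)}$ lands back inside $\kin$, is feasible for $\ma\vx=\vb$, and is $\epsilon\|\vc\|_2R$-suboptimal for the original program. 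Your plan for the runtime and robustness-to-sampling analysis is reasonable, but without this slack-variable reduction you cannot even start \cref{alg:decomposable-fn-min}.
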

\begin{proof}
We apply \cref{thm:initOne} for each $\ki$ separately to find a solution $\vz_i$. Then $\vz =(\vz_1, \dots, \vz_n) \in \R^{m+n}$ satisfies $\ball_{m+n}(\vz,\bar r)\subset \kcal$ with $\bar r=\frac{r}{6m^{3.5}}$. 
Then, we modified convex problem as in \cref{def:ModConvProg} with $s=2^{16}\frac{m^{2.5} R}{r\epsilon}$ and obtaining the following:
\[
\begin{array}{ll}
    \mbox{minimize} & \inprod{\bar \vc}{\bar \vx} \\ 
    \mbox{subject to} & \bar\ma \bar \vx = \bar \vb,\\
    &\bar \vx \in \bar \kcal\defeq\kcal \times \R_{\geq 0}^{m+n}\times\R_{\geq 0}^{m+n}\numberthis \label[prob]{eq:modifiedConvexProgram}
\end{array}
\]
with 
\[
\bar \ma = [\ma \mid \ma \mid -\ma], \bar \vb = \vb, \bar \vc = (\vc, \frac{\|\vc\|_2 s}{\sqrt{m+n}}\cdot \mathbf{1},\frac{\|\vc\|_2 s}{\sqrt{m+n}}\cdot \mathbf{1})^\top
\]
We solve the linear system $\ma \vy=\vb-\ma\vz$ for $\vy$. Then, we construct the initial $\ovx$ by set $\ovx^{(1)}=\vz$, 
\[
\ovx^{(2)}_i = \begin{cases}
\vy_i &\text{if } \vy_i\geq 0,\\ 
0 &\text{otherwise.}
\end{cases}
\quad \text{ and } \quad 
\ovx^{(3)}_i = \begin{cases}
-\vy_i &\text{if } \vy_i< 0,\\ 
0 &\text{otherwise}.
\end{cases}
\]

Then, we run \cref{alg:decomposable-fn-min} on the \cref{eq:modifiedConvexProgram}, with 
initial $\ovx$ set above,
$\bar m = 3(m+n), 
\bar n = n +2, 
\bar \epsilon = \frac{\epsilon }{6\sqrt{n}s}, \overline{\mathcal{K}}_{\textrm{in}} =\{\vx^{(1)}\in B(\vz,\bar{r}),(\vx^{(2)},\vx^{(3)})\in\R_{\geq0}^{2n}\}$ and $ \kouthat = \ball_{\bar m}(\mathbf{0}, \sqrt{n}R)$. 

By our choice of $\tend$, we have \[
\bar{t}_{\textrm{end}} 
= \frac{8 \bar m}{\bar \epsilon \|\bar \vc\|_2\bar R } 
\leq\frac{48m}{\epsilon\|\vc\|_2 R}.
\]

First, we check the condition that $s\geq 48\bar \nu \bar{t}_{\textrm{end}} \sqrt{m+n} \frac{R^2}{r}\|\vc\|_2$, we note that
\[
48\bar \nu \bar{t}_{\textrm{end}} \sqrt{m+n} \frac{R^2}{r}\|\vc\|_2 
\leq 27648\frac{m^{2.5} R}{\epsilon r} \leq  2^{16}\frac{m^{2.5} R}{r\epsilon} = s.
\]

Let $\bar{\vx}_{output}=(\vx_{output}^{(1)},\vx_{output}^{(2)},\vx_{output}^{(3)})$ be the output of \cref{alg:decomposable-fn-min}. 
Then, let $\vx_{output}=\vx_{output}^{(1)}+\vx_{output}^{(2)}-\vx_{output}^{(3)}$ as defined in \cref{thm:initial-main}.
By \cref{lem:totalOracleComplexity}, we have 
\[
\min_{\vx\in \pin} \bar\vc^\top\ovx \leq \min_{\vx\in \mathcal{P}} \vc^\top\ovx +\gamma
\]
where $\gamma = \bar \epsilon \cdot \|\bar \vc\|_2 \cdot \bar R$.

Applying (3) of \cref{thm:initial-main}, we have 
\[
\vc^\top \vx_{output} \leq \frac{\bar \nu+1}{\bar{t}_{\textrm{end}}}+\gamma+\min_{x\in \kcal,A\vx=\vb} \vc^\top \vx \leq  \min_{x\in \kcal,A\vx=\vb} \vc^\top \vx + \epsilon\cdot \|\vc\|_2 \cdot R.
\]
The last inequality follows by our choice of $\bar \epsilon$ and $\bar{t}_{\textrm{end}}$, we have 
$\gamma \leq \frac{\epsilon}{2}\|\vc\|_2R$ and $\frac{\bar \nu+1}{\bar{t}_{\textrm{end}}}\leq \frac{\epsilon}{2}\|\vc\|_2R$.
Plug this $\bar\epsilon$ in \cref{lem:totalOracleComplexity}, it gives the claimed oracle complexity.

\qedhere
\end{proof}

\begin{thm}[Main Result]\label{thm:mainResult} Given \cref{prob:OurProblemStatement} and $\theta^{(0)}$ such that
$\|\theta^\star - \theta^{(0)}\|_2\leq R$. Assuming all the $f_i$'s are $L$-Lipschitz, then there is an algorithm that in time $\operatorname{poly}(m \log(1/\epsilon))$, using $O(m\log(m/\epsilon))$ gradient oracle calls, outputs a vector $\vtheta\in \R^d$ such that 
\[
\sum_{i=1}^n f_i(\vtheta) \leq \sum_{i=1}^n f_i(\theta^\star)  + \epsilon \cdot L R.
\]
\end{thm}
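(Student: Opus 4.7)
The plan is to reduce \cref{prob:OurProblemStatement} to the convex program \cref{eq:1main} via an epigraph transformation and then invoke \cref{thm:MainThmOfKiProblem}. For each $f_i$ supported on the coordinates $S_i \subseteq [d]$ with $|S_i|=d_i$, I would introduce a local copy $\vy_i \in \R^{d_i}$ of $\vtheta|_{S_i}$ together with an epigraph variable $s_i \in \R$, and define
\[
\ki \defeq \{(\vy_i, s_i) \in \R^{d_i+1} : f_i(\vy_i) \leq s_i,\ \|\vy_i - \vtheta^{(0)}|_{S_i}\|_2 \leq 2R,\ |s_i - f_i(\vtheta^{(0)})| \leq 2LR\}.
\]
The two auxiliary box constraints make $\ki$ bounded while still containing the region around the optimum (by $L$-Lipschitzness and $\|\vtheta^\star - \vtheta^{(0)}\|_2 \leq R$). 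Consistency of the local copies with a single global $\vtheta$ is enforced by the linear system $\ma \vx = \vb$, where the rows of $\ma$ equate shared entries of the $\vy_i$'s. The objective becomes $\inprod{\vc}{\vx} = \sum_i s_i$ with $\vc$ the indicator on the $s_i$-coordinates; this upper bounds $\sum_i f_i(\vtheta)$ for any feasible $\vx$, and equality holds at an optimum. A subgradient query to $f_i$ yields a separation oracle for $\ki$ in the standard way: membership is checked directly, and when $f_i(\vy_i) > s_i$ any $g \in \partial f_i(\vy_i)$ gives the separating halfspace $\inprod{g}{\vz - \vy_i} - (u - s_i) \leq 0$; the box-constraint halfspaces are computed without any oracle call.

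Next I would verify the hypotheses of \cref{thm:MainThmOfKiProblem}. After centering, each $\ki$ has outer radius $\bar R = O(R + LR)$. For the inner radius, take $(\vy_i, s_i) = (\vtheta^{(0)}|_{S_i}, f_i(\vtheta^{(0)}) + LR)$; by $L$-Lipschitzness any nearby point remains in $\ki$, yielding $\bar r = \Omega(R/L)$ (this point can also be made jointly consistent across all $i$ since it comes from the single vector $\vtheta^{(0)}$, so the required strictly feasible $\vz$ for the linear system exists). Invoking \cref{thm:MainThmOfKiProblem} with $\bar\epsilon = \Theta(\epsilon / \sqrt{n})$ returns $\vx$ with
\[
\inprod{\vc}{\vx} \leq \min_{\vx \in \kcal,\, \ma\vx=\vb} \inprod{\vc}{\vx} + \bar\epsilon \cdot \|\vc\|_2 \cdot \bar R \leq \sum_i f_i(\vtheta^\star) + \epsilon L R,
\]
using $\|\vc\|_2 = \sqrt{n}$. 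Reading off $\vtheta$ from the (now consistent) local copies and using feasibility $f_i(\vy_i) \leq s_i$ gives $\sum_i f_i(\vtheta) \leq \sum_i s_i$, which completes the approximation guarantee. The oracle count from \cref{thm:MainThmOfKiProblem} is $O(\bar m \log(\bar m \bar R / (\bar\epsilon \bar r))) = O(m \log(m/\epsilon))$, since $\bar m = m + n = O(m)$ and $\bar R/\bar r = \poly(m, L, R, 1/\epsilon)$ is absorbed into the logarithm.

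The main obstacle is calibrating the reduction so that (i) the inner and outer radii are both polynomially bounded in the problem parameters, keeping the $\log(\bar R/\bar r)$ overhead to $O(\log(m/\epsilon))$, and (ii) the final suboptimality $\bar\epsilon \|\vc\|_2 \bar R$ in the reduced program maps exactly onto the target $\epsilon L R$ of the original problem. The remaining routine issue, namely that $\vtheta^{(0)}$ need not satisfy $\ma\vx = \vb$ for the expanded system, is handled by the same initial-feasibility reduction already used inside the proof of \cref{thm:MainThmOfKiProblem}, so no new ideas are required there.
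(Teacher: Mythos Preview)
Your reduction is essentially the paper's own proof: introduce local copies $\vy_i$ and an epigraph variable, truncate to a box around $\vtheta^{(0)}$, enforce consistency via $\ma\vx=\vb$, and invoke \cref{thm:MainThmOfKiProblem}. The only substantive difference is a normalization. The paper defines the epigraph as $\{(\vx_i,z_i): f_i(\vx_i)\le L z_i\}$ with $z_i^{(0)}=f_i(\vtheta^{(0)})/L$ and objective $\sum_i L z_i$, so that both the spatial and the epigraph coordinates live at scale $R$; this gives outer radius $5R$ and inner radius $R$, hence $\bar R/\bar r=O(1)$ independent of $L$. In your version the epigraph coordinate lives at scale $LR$, so $\bar R=\Theta((1{+}L)R)$ while the inscribed ball around $(\vtheta^{(0)}|_{S_i},\,f_i(\vtheta^{(0)})+LR)$ has radius $\Theta(LR/(L{+}1))$ (not $\Omega(R/L)$ as you wrote); the ratio $\bar R/\bar r$ then carries a factor $\Theta(L+1/L)$. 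Combined with $\bar\epsilon=\Theta(\epsilon L/(\sqrt{n}(1{+}L)))$, the oracle bound from \cref{thm:MainThmOfKiProblem} becomes $O(m\log(m/\epsilon)\cdot\log(L+1/L))$ rather than the claimed $O(m\log(m/\epsilon))$. You already flagged ``calibrating the reduction'' as the main obstacle; the paper's $L$-scaling of the epigraph is exactly the calibration that removes this stray $\log L$ term. With that one change your argument matches the paper's.
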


\begin{proof}
First, we reformulate \cref{eq:generalObj} using a change of variables and the epigraph trick. 
Suppose each $f_i$ depends on $d_i$ coordinates of $\vtheta$ given by $\{i_1, \dots, i_{d_i}\} \subseteq [d]$. Then,
symbolically define $\vx_i = [x^{(i)}_{i_1}; x^{(i)}_{i_2}; \dots; x^{(i)}_{i_{d_i}}] \in \R^{d_i}$ for each $i \in [n]$.
Since each $f_i$ is convex and supported on $d_i$ variables, its epigraph is convex and $d_i+1$ dimensional. So we may define the convex set
\[
\ki^{\textrm{unbounded}} = \left\{(\vx_i, z_i) \in \R^{d_i + 1}: f_i (\vx_i) \leq L z_i \right\}.
\]
Finally, we add linear constraints of the form $x^{(i)}_{k} = x^{(j)}_{k}$ for all $i,j,k$ where $f_i$ and $f_j$ both depend on $\vtheta_k$. We denote these by the matrix constraint $\ma \vx = \vb$.
Then, \cref{prob:OurProblemStatement} is equivalent to
\begin{equation} \label[none]{eq:linear-formulation}
\begin{array}{ll}
    \mbox{minimize}  & \sum_{i=1}^n L z_i \\
    \mbox{subject to} &  \ma \vx =\vb \\
      & (\vx_i, z_i) \in \ki^{\textrm{unbounded}} \text{ for each $i \in [n]$}.
\end{array}
\end{equation}

Since we are given $\vtheta^{(0)}$ satisfying $\norm{\vtheta^{(0)} - \vtheta^*}_2 \leq R$, 
we define $\vx_i^{(0)} = [ \vtheta^{(0)}_{i_1}; \dots, \vtheta^{(0)}_{i_{d_i}}]$ and 
$z_i^{(0)} = f_i(\vtheta^{(0)})/L$.
Then, we can restrict the search space $\ki^{\textrm{unbounded}}$ to
\begin{align*}
\ki &= \ki^{\textrm{unbounded}}  \cap \{ (\vx_i, z_i) \in \R^{d_i+1}: \|\vx_i-\vx_i^{(0)}\|_2 \leq R \text{ and }
    z_i^{(0)} -2R \leq z_i \leq z_i^{(0)} + 2R \}.
\end{align*}

It's easy to check that $\kcal_i$ is contained in a ball of radius $5R$ centered at $(\vx^{(0)}_i, z_i^{(0)})$, 
and contains a ball of radius $R$ centered at $(\vx^{(0)}_i, z_i^{(0)})$. 
The subgradient oracle for $f_i$ translates to a separation oracle for $\ki$.
Then, we apply \cref{thm:MainThmOfKiProblem} to \cref{eq:linear-formulation} with $\ki^{\textrm{unbounded}}$ replaced by $\ki$ to get the error guarantee and oracle complexity directly.
\end{proof}

Finally, we have the matching lower bound.

\thmLowerBound*
\begin{proof}
    \cite{DBLP:books/sp/Nesterov04} shows that for any $d_i$, there exists $f_i : \R^{d_i} \mapsto \R$ for which $\Omega(d_i \log (1/\epsilon))$ total gradient queries are required. 
    We define $f_1, \dots, f_n$ to be such functions on disjoint coordinates of $\vtheta$. 
    It follows that $\Omega (\sum_{i=1}^n d_i \log (1/\epsilon)) = \Omega (m \log (1/\epsilon))$ gradient queries are required in total.
\end{proof}

\section{Initialization}\label{sec:init}
\subsection{Constructing an initial \texorpdfstring{$\protect\kini$}{Kin,i}}\label{sec:initial-ki}

In this section, we discuss how to construct an initial set $\kini$ to serve as an input to \cref{alg:decomposable-fn-min}. In
particular, we will prove the following theorem. 
\begin{thm}\label{thm:initOne}
Suppose we are given separation oracle access to a convex set $\kcal$ 
that satisfies $\ball(\vz,r)\subseteq\kcal\subseteq \ball(\mathbf{0},R)$ for some $\vz\in\R^{d}$.
Then, \cref{alg:InnerBallFinding}, in $O(d\log(R/r))$ separation oracle calls to $\kcal$, outputs a point
$\vx$ such that $\ball\left(\vx,\frac{r}{6 d^{3.5}}\right)\subseteq \kcal$. 
\end{thm}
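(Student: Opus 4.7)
The plan is to run a modern cutting plane method (CPM), such as the one in \cite{lsw15, jlsw20}, on $\kcal$ using the given separation oracle. Such CPMs achieve $O(d \log(R/r))$ query complexity for the feasibility problem when the feasible set contains a Euclidean ball of radius $r$, matching the theorem's oracle budget. I proceed in two stages: first, I use the CPM to find a feasibility point that is simultaneously ``centered'' in an outer convex approximation of $\kcal$; second, I argue that such a centered feasibility point must admit a polynomially-sized inscribed ball in $\kcal$.

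\textbf{Stage 1 (running the CPM).} I initialize an outer convex set $P_0 = \ball(\mathbf{0}, R) \supseteq \kcal$. Each iteration $k$, the CPM selects a centroid-like center $\vx_k$ of $P_k$ and queries the separation oracle. If $\vx_k \in \kcal$, we have a candidate feasibility point; otherwise the returned halfspace $H_k$ is used to form $P_{k+1} = P_k \cap H_k$, which preserves $\kcal \subseteq P_{k+1}$. Each such cut shrinks $\vol(P_k)$ by a constant multiplicative factor (\cref{rem:GrunbaumSimpleCase}). Since $\vol(P_k) \geq \vol(\kcal) \geq \vol(\ball(r))$ throughout, within $T = O(d\log(R/r))$ iterations the CPM must produce some $\vx_T \in \kcal$; moreover the termination condition ensures $\vol(P_T) \leq \poly(d) \cdot \vol(\ball(r))$.

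\textbf{Stage 2 (deep ball argument).} Since $P_T \supseteq \kcal \supseteq \ball(\vz, r)$ and $\vol(P_T) \leq \poly(d) \cdot \vol(\ball(r))$, the body $P_T$ is forced to be ``round'': its widths through the center differ by only a $\poly(d)$ factor (via a John's-theorem-type argument), so $\vx_T$ lies within $\poly(d) \cdot r$ of $\vz$. Suppose for contradiction that some $\vy \in \ball(\vx_T, \rho) \setminus \kcal$ exists with $\rho = r/(6d^{3.5})$; querying the separation oracle at $\vy$ would yield a halfspace $H_y$ passing within distance $\rho$ of $\vx_T$. By \cref{cor:GrunbaumVolApprox}, cutting $P_T$ by $H_y$ would shrink its volume by a factor at most $1 - (1/e - O(\rho/r))$, which, combined with the volume bound established in Stage 1, contradicts $\vol(P_T \cap H_y) \geq \vol(\kcal) \geq \vol(\ball(r))$ once $\rho$ is sufficiently small relative to the roundness of $P_T$.

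\textbf{Main obstacle.} The main technical challenge is tracking the constants that yield the precise $6d^{3.5}$ factor. This involves balancing three loss factors: (i) the roundness loss of $P_T$ from John's theorem, (ii) the distance from the center of $P_T$ to the unknown inscribed ball's center $\vz$, and (iii) the constants in Gr\"unbaum-type inequalities relating cut depth to volume reduction. The accounting must be tight enough to match the $d^{3.5}$ exponent, and it must be carried out without exceeding the $O(d\log(R/r))$ oracle budget, which rules out iteratively re-running the CPM on shrunken inner approximations.
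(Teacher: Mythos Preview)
Your proposal has a genuine gap in both stages.

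\textbf{Stage 1.} You assert that at termination the CPM hands you simultaneously a feasible center $\vx_T\in\kcal$ \emph{and} $\vol(P_T)\leq\poly(d)\cdot\vol(\ball(r))$. These two conditions are in tension. A centroid-based CPM halts as soon as the centroid lands in $\kcal$; if that happens at iteration $0$ (e.g.\ $\mathbf 0\in\kcal$) you have $P_0=\ball(\mathbf 0,R)$ and no volume control whatsoever. Conversely, if you insist on cutting until the volume is small, you need a separating hyperplane at every step, and the oracle does not produce one at a feasible point. Your outline gives no mechanism for continuing to make progress once the center is feasible but shallow.

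\textbf{Stage 2.} Even granting you both conditions, the contradiction does not close. A single Gr\"unbaum cut through a near-centroid point gives $\vol(P_T\cap H_y)\leq (1-1/e+\eta)\vol(P_T)$. To contradict $\vol(P_T\cap H_y)\geq\vol(\ball(r))$ you would need $\vol(P_T)<\frac{1}{1-1/e+\eta}\vol(\ball(r))\approx 1.58\,\vol(\ball(r))$, not merely $\poly(d)\cdot\vol(\ball(r))$. And since $P_T\supseteq\kcal$ always, $\vol(P_T)$ can never drop below $\vol(\kcal)$, which may be far larger than $\vol(\ball(r))$. One constant-factor volume reduction is simply too weak to yield the inscribed-ball conclusion.

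The paper's \cref{alg:InnerBallFinding} sidesteps both issues by a different mechanism. It never relies on $\vol(\kout)$ being small at termination. Instead, in every iteration it samples $\vu$ uniformly from $\ball(\vv,r/(6d))$ around the centroid $\vv$, and when $\vu\in\kcal$ it \emph{directly certifies} depth by testing the $2d$ points $S=\{\vu\pm\tfrac{r}{6d^3}\ve_i\}$; if all lie in $\kcal$, the cross-polytope $\conv(S)$ contains an $\ell_2$ ball of radius $r/(6d^{3.5})$. The randomization is what keeps the oracle budget linear: by the boundary-volume estimate (\cref{thm:boundary_set}), the event ``$\vu\in\kcal$ but not $r/(6d^3)$-deep'' has probability at most $1/d$, so the $O(d)$-call check of $S$ contributes only $O(1)$ expected calls per iteration. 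The cutting argument (\cref{cor:GrunbaumVolApprox}) is used only to bound the \emph{number of iterations}, via $\|\vv-\vu\|_{\Sigma^{-1}}\leq 1/3$ (which follows from $\Sigma\succeq \tfrac{r^2}{d(d+2)}I$ by \cref{thm:circumscribed-ellipsoid}), not to certify depth of any single point.
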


\begin{algorithm}[H]
\caption{Inner Ball Finding}
\label{alg:InnerBallFinding}

\begin{algorithmic}[1]

\algnewcommand{\LeftComment}[1]{\State \(\triangleright\) #1} 

\State $\kout\leftarrow B(0,R)$

\While{ \textbf{true} }
	\State Let $\vv$ be the center of gravity of $\kout$
	
	\State Sample $\vu$ from $B(\vv,r/(6d))$ uniformly
	
	\If{$\vu\in\mathcal{K}$}
		\State Let $S=\{\vv\pm\frac{r}{6d^3}\ve_{i}:i\in[d]\}$
		
		\If{ $S \subset \mathcal{K}$}
			\State \Return the inscribed ball of $\conv(S)$
		\EndIf
	\EndIf
	
	\State Let $\kout\leftarrow\kout\cap\mathcal{H}$ where $\mathcal{H}=\mathcal{O}(\vu)$

\EndWhile

\end{algorithmic}
\end{algorithm}

Before we prove the preceding theorem, we need the following facts about
the self-concordant barrier and convex sets. 

\begin{thm}[{\cite[Theorem 4.2.6]{DBLP:books/sp/Nesterov04}}]\label{thm:SCFactInit}
Let $\barr:\textrm{int}(\kcal)\rightarrow\R$ be a $\nu$-self-concordant barrier with the minimizer $\vx_{\barr}^{\star}$.
Then, for any $\vx\in\textrm{int}(\kcal)$ we have:

\[
\|\vx_{\barr}^{\star}-\vx\|_{\vx_{\barr}^{\star}}\leq\nu+2\sqrt{\nu}.
\]
On the other hand, for any $\vx\in\R^{d}$ such that $\|\vx-\vx_{\barr}^{\star}\|_{\vx_{\barr}^{\star}}\leq1$,
we have $x\in\textrm{int}(\kcal)$.
\end{thm}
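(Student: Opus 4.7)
My plan is to handle the two assertions independently. The Dikin-ball claim (second statement) is immediate: it is the standard self-concordance property that the unit-radius ball in the local norm at \emph{any} point $\vz\in\textrm{int}(\kcal)$ is contained in $\textrm{int}(\kcal)$, specialized here to $\vz=\vx_{\barr}^{\star}$. This follows directly from \cref{def:SelfConcordanceBarr} by a standard one-line self-concordance calculus argument and requires no appeal to the minimizer property.

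For the first statement, I would fix $\vx\in\textrm{int}(\kcal)$, set $r\defeq\|\vx-\vx_{\barr}^{\star}\|_{\vx_{\barr}^{\star}}$, parametrize the ray $\gamma(t)\defeq\vx_{\barr}^{\star}+t(\vx-\vx_{\barr}^{\star})$, and work with the one-dimensional restriction $\phi(t)\defeq\barr(\gamma(t))$. Since $\vx=\gamma(1)$ lies in the interior and $\textrm{int}(\kcal)$ is convex, $\phi$ is defined on some interval $[0,T)$ with $T>1$. First-order optimality gives $\phi'(0)=0$, and by construction $\phi''(0)=r^{2}$. The one-dimensional self-concordance inequality $|\phi'''|\le 2(\phi'')^{3/2}$ says $\phi''(t)^{-1/2}$ is $1$-Lipschitz, so $\phi''(t)\ge r^{2}/(1+rt)^{2}$ on $[0,T)$, which integrates to $\phi'(\epsilon)\ge r^{2}\epsilon/(1+r\epsilon)$ for every $\epsilon\in(0,T)$.

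To convert this lower bound on $\phi'$ into an upper bound on $r$, I combine it with the $\nu$-barrier inequality $\phi'(t)^{2}\le\nu\,\phi''(t)$, which is Cauchy--Schwarz applied to the local-norm bound $\nabla\barr(\gamma(t))^{\top}(\nabla^{2}\barr(\gamma(t)))^{-1}\nabla\barr(\gamma(t))\le\nu$ from \cref{def:SelfConcordanceBarr}. Rearranged, this is the Riccati-type differential inequality $\phi''\ge(\phi')^{2}/\nu$. Comparing with the ODE $v'=v^{2}/\nu$ started at $v(\epsilon)=r^{2}\epsilon/(1+r\epsilon)$, whose explicit solution blows up at $T_{v}=\epsilon+\nu(1+r\epsilon)/(r^{2}\epsilon)$, forces $T\le T_{v}$ (otherwise $\phi'$ would have to be infinite strictly inside its domain). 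Using $T>1$ and optimizing over $\epsilon$ at $\epsilon=\sqrt{\nu}/r$ yields $T_{v}=(\nu+2\sqrt{\nu})/r$, and the constraint $T_{v}>1$ then reduces to the desired bound $r<\nu+2\sqrt{\nu}$.

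The delicate step is the Riccati comparison: I need to verify that $\phi'\ge v$ persists on the common interval $[\epsilon,\min(T,T_{v}))$. This follows because $w\defeq\phi'-v$ satisfies $w(\epsilon)\ge 0$ together with $w'\ge w\cdot(\phi'+v)/\nu\ge 0$ whenever $w\ge 0$, so nonnegativity is preserved, after which the finite-time blow-up of $v$ forces termination of $\phi$'s domain by time $T_{v}$. Everything else is routine 1D self-concordance calculus and a single-variable optimization to pin down the optimal $\epsilon$.
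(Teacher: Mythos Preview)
The paper does not supply its own proof of this theorem: it is stated with a citation to \cite[Theorem~4.2.6]{DBLP:books/sp/Nesterov04} and used as a black box (in the proof of \cref{thm:initOne} and related initialization results). So there is no ``paper's proof'' to compare against.

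That said, your proof is correct. A couple of minor points worth making explicit when you write it up: (i) the choice $\epsilon=\sqrt{\nu}/r$ must lie in $(0,T)$, which you only know via $T>1$; this is fine because if $r\le\sqrt{\nu}$ the bound $r\le\nu+2\sqrt{\nu}$ is already trivial, and otherwise $\epsilon\le 1<T$; (ii) in the Riccati step you need $\phi'+v\ge 0$ to make the Gronwall-type comparison clean, and this holds because $\phi'(0)=0$, $\phi''\ge 0$, and $v$ is nonnegative on its domain by construction.

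As for how this relates to the cited source: your argument is essentially Nesterov's, recast in slightly different language. Nesterov first proves the more general statement (his Theorem~4.2.5, which also appears in the paper as \cref{lem:barrier-gradient}) that $\langle\nabla\barr(\vx),\vy-\vx\rangle\ge 0$ forces $\|\vy-\vx\|_{\vx}\le\nu+2\sqrt{\nu}$, and then specializes to $\vx=\vx_{\barr}^{\star}$ where the gradient vanishes. His proof of 4.2.5 likewise restricts to the line through $\vx$ and $\vy$, combines the one-dimensional self-concordance bound on $\phi''$ with the barrier inequality $(\phi')^2\le\nu\phi''$, and argues by blow-up. Your explicit Riccati comparison is a clean way of packaging the same mechanism.
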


\begin{thm}[{\cite[Theorem 4.1]{kannan1995isoperimetric}}]
\label{thm:circumscribed-ellipsoid}Let $\mathcal{K}\subseteq \R^d$ be a convex
set with center of gravity $\mu$ and covariance matrix
$\Sigma$. Then,
\[
\{\vx:\|\vx-\mu\|_{\Sigma^{-1}}\leq\sqrt{(d+2)/d}\}\subseteq\mathcal{K}\subseteq\{\vx:\|\vx-\mu\|_{\Sigma^{-1}}\leq\sqrt{d(d+2)}\}.
\]
\end{thm}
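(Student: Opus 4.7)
The plan is to reduce the $d$-dimensional claim to a one-dimensional extremal problem via projection onto unit directions, and then solve that 1D problem by comparison with explicit cone densities.

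\textbf{Step 1: affine normalization.} Apply the change of variables $\vy = \Sigma^{-1/2}(\vx - \mu)$. Under this map the body $\mathcal{K}$ becomes a convex body $\tilde{\mathcal{K}}$ with centroid $\mathbf{0}$ and covariance $I_d$, and the Mahalanobis ellipsoid $\{\vx:\|\vx-\mu\|_{\Sigma^{-1}}\le r\}$ becomes the Euclidean ball $\ball_d(\mathbf{0},r)$. Both sides of the claimed containment transform consistently, so the theorem reduces to showing
\[
\ball_d\!\left(\mathbf{0},\sqrt{(d+2)/d}\right)\;\subseteq\; \tilde{\mathcal{K}} \;\subseteq\; \ball_d\!\left(\mathbf{0},\sqrt{d(d+2)}\right).
\]

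\textbf{Step 2: reduction to the support function.} For every unit vector $\vv$ let $h_{\tilde{\mathcal{K}}}(\vv) := \max_{\vx \in \tilde{\mathcal{K}}} \vv^\top \vx$. By convex duality both containments are equivalent to the two-sided bound $\sqrt{(d+2)/d}\le h_{\tilde{\mathcal{K}}}(\vv) \le \sqrt{d(d+2)}$ for every unit $\vv$: the upper bound places $\tilde{\mathcal{K}}$ inside the intersection of the halfspaces $\{\vv^\top \vx \le \sqrt{d(d+2)}\}$, which is the outer ball; the lower bound forces every supporting hyperplane of $\tilde{\mathcal{K}}$ to lie at distance at least $\sqrt{(d+2)/d}$ from $\mathbf{0}$, forcing the inner ball inside $\tilde{\mathcal{K}}$.

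\textbf{Step 3: reduction to a 1D moment problem.} Fix a unit $\vv$ and consider the marginal density $f(t) := \vol_{d-1}(\tilde{\mathcal{K}}\cap\{\vv^\top \vx = t\})/\vol(\tilde{\mathcal{K}})$ of the uniform distribution on $\tilde{\mathcal{K}}$ in direction $\vv$. Let $[\alpha,\beta]$ denote its support; then $\beta = h_{\tilde{\mathcal{K}}}(\vv)$. Brunn--Minkowski applied to the convex slices implies that $f^{1/(d-1)}$ is concave on $[\alpha,\beta]$, and the centroid/covariance normalizations translate to
\[
\int f = 1, \qquad \int t\,f(t)\,dt = 0, \qquad \int t^2\,f(t)\,dt = \vv^\top I_d\, \vv = 1.
\]

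\textbf{Step 4: the 1D extremal bound.} It therefore suffices to prove: whenever $f:[\alpha,\beta]\to[0,\infty)$ has $f^{1/(d-1)}$ concave and satisfies the three moment conditions above, one has $\sqrt{(d+2)/d}\le \beta\le \sqrt{d(d+2)}$. I would first verify the equality cases explicitly for the cone densities $f_{+}(t)\propto (\beta-t)^{d-1}$ (apex at $\beta$) and $f_{-}(t)\propto (t-\alpha)^{d-1}$ (apex at $\alpha$); solving the three moment equations in each case is a short algebraic calculation that yields the pairs $(\alpha,\beta)=(-\sqrt{(d+2)/d},\sqrt{d(d+2)})$ and $(-\sqrt{d(d+2)},\sqrt{(d+2)/d})$, saturating the upper and lower bounds respectively.

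\textbf{The main obstacle.} The hardest step is showing the two cone densities are truly extremal, i.e.\ that no admissible $f$ can push $\beta$ beyond $\sqrt{d(d+2)}$ or below $\sqrt{(d+2)/d}$. My approach is a perturbation argument applied to $g := f^{1/(d-1)}$: if $g$ is concave but not affine on $[\alpha,\beta]$, construct a small modification of $g$ that preserves concavity, nonnegativity, and the three linear moment constraints while shifting $\beta$, contradicting extremality. Equivalently, among the convex set of admissible densities the extreme points of the linear functional $f\mapsto \beta$ are exactly those with $g$ affine on its support, i.e.\ the cone densities computed above. Carrying out the perturbation carefully (or invoking the classical Hensley/Fradelizi extreme-point characterization for $(d-1)$-concave densities with prescribed moments) is routine but requires some bookkeeping to keep all three moments balanced.
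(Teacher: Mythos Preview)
The paper does not prove this theorem; it is quoted without proof as a black box from \cite{kannan1995isoperimetric} and used only inside the proof of \cref{thm:initOne}. So there is no ``paper's own proof'' to compare against.

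That said, your outline is the standard route and matches the original KLS argument: normalize to isotropic position, project to one dimension, use Brunn--Minkowski to obtain $(d-1)$-concavity of the marginal, and identify the cone densities as the extremizers. Your moment computation in Step~4 is correct and yields exactly the claimed constants.

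The one soft spot is your extremality argument. As stated, ``$f\mapsto\beta$'' is not a linear functional of $f$ (the right endpoint of the support is not an integral against $f$), so the extreme-point language needs to be reformulated before it applies. A cleaner execution, and essentially the one in KLS, is a direct comparison: fix $\beta$ and replace $f$ by the unique cone density $g(t)\propto(\beta-t)^{d-1}$ on some $[\gamma,\beta]$ normalized so that $g(0)=f(0)$. Concavity of $f^{1/(d-1)}$ forces the graphs of $f^{1/(d-1)}$ and $g^{1/(d-1)}$ to cross at most twice, so $f-g$ has at most two sign changes on $[\alpha,\beta]$; a short mass-and-first-moment counting argument then shows that the cone has mean at least $0$ and second moment at most that of $f$, which gives $\beta\le\sqrt{d(d+2)}$. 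The lower bound is symmetric with the apex at $\alpha$. Alternatively, Fradelizi's sharp moment inequalities for $s$-concave measures package exactly this statement and can be cited directly.
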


\begin{thm}[{\cite[Section 1.4.2]{brazitikos2014geometry}}]\label{thm:boundary_set}
Let $\mathcal{K}$ be a convex set with $\mathcal{K}\subset B(\vu,R)$
for some $R$. Let $\mathcal{K}_{-\delta}=\{\vx:B(\vx,\delta)\subset\mathcal{K}\}$.
Then, we have
\[
\vol\mathcal{K}_{-\delta}\geq \vol \mathcal{K}-(1-(1-\frac{\delta}{R})^{d})\cdot\vol B(\vu,R)
\]
\end{thm}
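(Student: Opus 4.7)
The plan is to combine the Brunn–Minkowski inequality (applied to the fact that $\mathcal{K}_{-\delta}\oplus\delta B_1\subseteq\mathcal{K}$) with a short algebraic comparison to the desired right-hand side. Throughout, let $B_1$ denote the Euclidean unit ball in $\R^d$, so $\omega_d\defeq\vol(B_1)$ and $\vol B(\vu,R)=\omega_d R^d$. Set $t\defeq\delta/R\in[0,1]$ and write $a\defeq\vol(\mathcal{K})^{1/d}$, $b\defeq\vol(B(\vu,R))^{1/d}=\omega_d^{1/d}R$; by hypothesis $a\leq b$.

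The first step is a standard Minkowski-difference observation: by definition of $\mathcal{K}_{-\delta}$, every point of $\mathcal{K}_{-\delta}$ has a ball of radius $\delta$ inside $\mathcal{K}$, hence $\mathcal{K}_{-\delta}+\delta B_1\subseteq\mathcal{K}$. Applying Brunn–Minkowski gives
\[
\vol(\mathcal{K})^{1/d}\;\geq\;\vol(\mathcal{K}_{-\delta})^{1/d}+\delta\,\omega_d^{1/d},
\]
which, after noting $\delta\,\omega_d^{1/d}=tb$, rearranges to $\vol(\mathcal{K}_{-\delta})\geq(a-tb)^d$ whenever $a\geq tb$.

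The second step is the algebraic claim that $(a-tb)^d\geq a^d-b^d\bigl(1-(1-t)^d\bigr)$ for $tb\leq a\leq b$, which is exactly what we need since the right-hand side equals $\vol(\mathcal{K})-\bigl(1-(1-\delta/R)^d\bigr)\vol B(\vu,R)$. Define
\[
f(a)\;\defeq\;(a-tb)^d+b^d\bigl(1-(1-t)^d\bigr)-a^d.
\]
A direct substitution shows $f(b)=(1-t)^d b^d+b^d-(1-t)^d b^d-b^d=0$, while $f'(a)=d(a-tb)^{d-1}-d\,a^{d-1}\leq 0$ because $0\leq a-tb\leq a$. Hence $f$ is non-increasing on $[tb,b]$, giving $f(a)\geq f(b)=0$ on that interval and the claim follows.

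Finally I would dispatch the edge case $a<tb$, where the Brunn–Minkowski consequence is vacuous: here the target right-hand side $a^d-b^d+(1-t)^d b^d$ is itself non-positive, because the elementary inequality $1-(1-t)^d\geq t^d$ (verified by checking that $g(t)\defeq 1-(1-t)^d-t^d$ vanishes at $0$ and $1$ with $g'$ changing sign exactly once on $[0,1]$) gives $a^d<(tb)^d\leq b^d(1-(1-t)^d)$; since $\vol(\mathcal{K}_{-\delta})\geq 0$ trivially, the theorem holds in this regime too. The only mildly nontrivial piece is the algebraic monotonicity argument in step two, but it reduces to a one-line derivative calculation, so I do not expect real obstacles beyond carefully tracking the substitutions $b=\omega_d^{1/d}R$ and $tb=\delta\omega_d^{1/d}$.
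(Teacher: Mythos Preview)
Your Brunn--Minkowski step yields the inequality in the wrong direction. From $\mathcal{K}_{-\delta}+\delta B_1\subseteq\mathcal{K}$ and Brunn--Minkowski you correctly derive
\[
\vol(\mathcal{K})^{1/d}\geq\vol(\mathcal{K}_{-\delta})^{1/d}+\delta\,\omega_d^{1/d}=\vol(\mathcal{K}_{-\delta})^{1/d}+tb,
\]
but this rearranges to $\vol(\mathcal{K}_{-\delta})^{1/d}\leq a-tb$, i.e.\ $\vol(\mathcal{K}_{-\delta})\leq(a-tb)^d$, an \emph{upper} bound on $\vol(\mathcal{K}_{-\delta})$. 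The theorem asks for a \emph{lower} bound, so the entire chain that follows (the algebraic comparison of $(a-tb)^d$ with $a^d-b^d(1-(1-t)^d)$, and the edge case) is attached to an inequality pointing the wrong way and cannot be salvaged.

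The argument that actually works (and is the one in the cited reference, which the paper simply quotes without proof) uses the coarea formula for inner parallel bodies together with monotonicity of surface area under inclusion of convex bodies. Observe that $\mathcal{K}_{-s}\subseteq B(\vu,R-s)$ for every $s\in[0,\delta]$, since $B(\vx,s)\subset\mathcal{K}\subset B(\vu,R)$ forces $\|\vx-\vu\|\leq R-s$. Then
\[
\vol(\mathcal{K})-\vol(\mathcal{K}_{-\delta})=\int_0^{\delta}S(\mathcal{K}_{-s})\,ds\leq\int_0^{\delta}S\bigl(B(\vu,R-s)\bigr)\,ds=\int_0^{\delta}d\,\omega_d(R-s)^{d-1}\,ds=\omega_dR^d\bigl(1-(1-\delta/R)^d\bigr),
\]
which is exactly the claim. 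The containment $\mathcal{K}\subset B(\vu,R)$ is used to bound the surface area of each inner shell, not to feed into Brunn--Minkowski.
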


\begin{proof}[Proof of \cref{thm:initOne}]
 We note that by the description of the \cref{alg:InnerBallFinding}, the returned
ball is the inscribed ball of $\conv(S)$ and we have $\vv\in\mathcal{K}$
for each $\vv\in S$. Then, we must have $\conv(S)\subseteq\mathcal{K}$.
We note that $\conv(S)$ is a $\ell_1$ ball with $\ell_1$ radius $\frac{r}{6 d^{3}}$,
then the inscribed ball has $\ell_2$ radius $\frac{r}{6 d^{3.5}}.$

First, we prove the sample complexity of the algorithm above. We use
$\mathcal{K}_{t}$ to denote the $\kout$ at the $t$-th iteration.
We first observe that throughout the algorithm, $\mathcal{K}_{t}$
is obtained by intersection of halfspaces and $B(0,R)$. This implies
\[
B(\vz,r)\subseteq\mathcal{K}\subseteq\mathcal{K}_{t}\qquad\forall t.
\]
Since $\mathcal{K}_{t}$ contains a ball of radius $r$, let $A_{t}$
be the covariance matrix of $\mathcal{K}_{t}$. By \cref{thm:circumscribed-ellipsoid},
we have 
\[
A_{t}\succeq \frac{r^2}{d(d+2)} I.
\]
Let $\mathcal{H}_{t}$ be the halfspace returned by the oracle at iteration $t$. 
We note that $\vu$ is sampled uniform from $B(\vv,r/(6d))$, so we have
\[
\|\vv-\vu\|_{A^{-1}}\leq\frac{\sqrt{d (d+2)}}{r}\cdot\frac{r}{6d}\leq\frac{1}{3}.
\]
Apply the inequality above to \cref{cor:GrunbaumVolApprox}, we have
\[
\vol(\mathcal{K}_{t})\leq(1-1/e+1/3)^{t}\vol(\mathcal{K}_{0})\leq(1-1/30)^{t}\vol(B(0,R)).
\]
Then, since $B(\vz,r)\subseteq\mathcal{K}_{t}$ for all the $t$,
this implies the algorithm at most takes $O(d\log(R/r))$ many iterations. 

Now, we consider the number of oracle calls within each iterations. There are three possible cases to consider:
\begin{enumerate}
\item $\vu\in\mathcal{K}_{-\delta}$ with $\delta=\frac{r}{6d^{3}}$
(see the definition of $\mathcal{K}_{-\delta}$ in \cref{thm:boundary_set}). In this
case, we have $S\subset\mathcal{K}$ and this is the last iteration.
We can pay this $O(d)$ oracle calls for the last iteration.

\item $\vu\in\mathcal{K}\backslash\mathcal{K}_{-\delta}$.

Since $\vu$ is uniformly sampled from $B(\vv,r/(6d))$, \cref{thm:boundary_set} shows that
$\vu\in\mathcal{K}\backslash\mathcal{K}_{-\delta}$ with probability
at most
\[
1-(1-\frac{\delta}{r/(6d)})^{d}\leq\frac{1}{d}.
\]
Hence, this case only happens with probability only at most $1/d$.
Since the cost of checking $S\subset\mathcal{K}$ takes $O(d)$ oracle
calls. The expected calls for this case is only $O(1)$.

\item $\vu\notin K$. The cost is just $1$ call.
\end{enumerate}
Combining all the cases, the expected calls is $O(1)$ per iteration.

\end{proof}

\subsection{Initial point reduction}\label{sec:initial-point-reduction}

In this section, we will show how to obtain an initial feasible point
for the algorithm.

\begin{defn}\label[defn]{def:ModConvProg}
Given a convex program $\min_{\ma\vx=\vb,\vx\in\mathcal{K}\subseteq \R^{d}}\vc^{\top}\vx$
and some $s > 0$, 
we define  $\vc_{1}=\vc,\vc_{2}=\vc_{3}=\frac{s \|\vc\|_2}{\sqrt{d}} \cdot \mathbf{1}$  and $\mathcal{P}=\{\vx^{(1)}\in\mathcal{K},(\vx^{(2)},\vx^{(3)})\in\R_{\geq0}^{2d}:\ma(\vx^{(1)}+\vx^{(2)}-\vx^{(3)})=\vb\}$. We then define the {\em modified convex program} by
\[
\min_{(\vx^{(1)},\vx^{(2)},\vx^{(3)})\in\mathcal{P}}\vc_{1}^{\top}\vx^{(1)}+\vc_{2}^{\top}\vx^{(2)}+\vc_{3}^{\top}\vx^{(3)}.
\]
 We denote $(\vc_{1},\vc_{2},\vc_{3})$ 
by $\overline{\vc}$.
\end{defn}

\begin{thm} 
\label{thm:initial-main}Given a convex program $\min_{\ma\vx=\vb,\vx\in\mathcal{K}\subseteq \R^d}\vc^{\top}\vx$
with outer radius $R$ and some convex set $\kin$ with $\kin\subseteq \kcal$ and inner radius $r$. For any modified convex program as in \cref{def:ModConvProg}  with $s\geq 48\nu t \sqrt{d}\cdot\frac{R}{r}\cdot\|c\|_{2}R$.
For an arbitrary $t\in \R_{\geq 0}$, we define the function \[f_{t}(\vx^{(1)},\vx^{(2)},\vx^{(3)})=t(\vc_{1}^{\top}\vx^{(1)}+\vc_{2}^{\top}\vx^{(2)}+\vc_{3}^{\top}\vx^{(3)})+\barr_{\mathcal{P}_{\text{in}}}(\vx^{(1)},\vx^{(2)},\vx^{(3)})\] 
where $\barr_{\mathcal{P}_{\text{in}}}$ is some $\nu$ self-concordant barrier for the set
\[\mathcal{P}_{\text{in}}=\{\vx^{(1)}\in\kin,(\vx^{(2)},\vx^{(3)})\in\R_{\geq0}^{2d}:\ma(\vx^{(1)}+\vx^{(2)}-\vx^{(3)})=\vb\}.\]
Given $\overline{\vx}_{t}\defeq(\vx_{t}^{(1)},\vx_{t}^{(2)},\vx_{t}^{(3)})=\arg\min_{(\vx^{(1)},\vx^{(2)},\vx^{(3)})\in\pin}f_{t}(\vx^{(1)},\vx^{(2)},\vx^{(3)})$, we denote $\vx_{\text{in}}=\vx_{t}^{(1)}+\vx_{t}^{(2)}-\vx_{t}^{(3)}$.
Suppose $\min_{\ovx \in\pin} \bar\vc^\top \ovx  \leq \min_{\ovx\in\mathcal{P}	}\bar\vc^\top \ovx + \gamma$, we have the following

\begin{enumerate}
\item $\ma\vx_{\text{in}}=\vb$,
\item $\vx_{\text{in}}\in\kin$,
\item $\vc^{\top}\vx_{\text{in}}\leq \min_{\vx\in\kcal,\ma\vx=\vb}\vc^{\top}\vx +\frac{\nu+1}{t}+\gamma$.
\end{enumerate}
\end{thm}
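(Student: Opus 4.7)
The proof decomposes cleanly into the three listed claims, and the glue that ties everything together is the IPM-style suboptimality bound on $\overline{\vx}_t$. Claim (1) is immediate from the definition of $\pin$: since $\overline{\vx}_t\in\pin$, we have $\ma(\vx_t^{(1)}+\vx_t^{(2)}-\vx_t^{(3)})=\vb$, i.e., $\ma\vx_{\text{in}}=\vb$. For the main estimate, I would note that because $\overline{\vx}_t$ minimizes $f_t$ over $\pin$, first-order optimality on the affine hull of $\pin$ gives $t\bar\vc+\nabla\barr_{\pin}(\overline{\vx}_t)\in\operatorname{range}(\bar\ma^\top)$. Applying \cref{thm:sc1} to $\barr_{\pin}$ at $\overline{\vx}_t$ then yields, for every $\overline{\vx}'\in\pin$,
\[
\bar\vc^\top\overline{\vx}_t \le \bar\vc^\top\overline{\vx}' + \nu/t.
\]
Since any feasible $\vx\in\kcal$ with $\ma\vx=\vb$ lifts to $(\vx,0,0)\in\mathcal{P}$, we get $\min_{\mathcal{P}}\bar\vc^\top\cdot\le M$ where $M:=\min_{\vx\in\kcal,\ma\vx=\vb}\vc^\top\vx$. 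Combining with the hypothesis $\min_{\pin}\bar\vc^\top\cdot\le \min_{\mathcal{P}}\bar\vc^\top\cdot+\gamma$, this chains to
\[
\bar\vc^\top\overline{\vx}_t \;\le\; M+\gamma+\nu/t.
\]

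Writing $\bar\vc^\top\overline{\vx}_t=\vc^\top\vx_t^{(1)}+\tfrac{s\|\vc\|_2}{\sqrt{d}}(\|\vx_t^{(2)}\|_1+\|\vx_t^{(3)}\|_1)$ and using $\vx_t^{(1)}\in\kcal\subseteq\ball(0,R)$ together with $|M|\le\|\vc\|_2R$, the previous display rearranges to
\[
\|\vx_t^{(2)}\|_1+\|\vx_t^{(3)}\|_1 \;\le\; \frac{\sqrt{d}}{s\|\vc\|_2}\bigl(2\|\vc\|_2R+\gamma+\nu/t\bigr).
\]
Plugging in the assumed lower bound $s\ge 48\nu t\sqrt{d}\,(R/r)\,\|\vc\|_2R$ forces the right-hand side to be much smaller than both $r$ and $1/(t\|\vc\|_2)$ in the relevant regime. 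Claim (3) then falls out: setting $\vw:=\vx_t^{(2)}-\vx_t^{(3)}$,
\[
\vc^\top\vx_{\text{in}}-\vc^\top\vx_t^{(1)} = \vc^\top\vw \le \|\vc\|_2\,\|\vw\|_2 \le \|\vc\|_2\bigl(\|\vx_t^{(2)}\|_1+\|\vx_t^{(3)}\|_1\bigr) \le 1/t,
\]
and $\vc^\top\vx_t^{(1)}\le\bar\vc^\top\overline{\vx}_t\le M+\gamma+\nu/t$, which combine to give $\vc^\top\vx_{\text{in}}\le M+\gamma+(\nu+1)/t$, i.e., claim (3).

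Claim (2) is the most delicate and is where I expect the main obstacle. The plan is to use the inner ball $\ball(\vz,r)\subseteq\kin$ and exhibit $\vx_t^{(1)}+\vw$ as a convex combination of $\vx_t^{(1)}\in\kin$ and a point of $\ball(\vz,r)$; that is, find $\alpha\in[0,1)$ and $\vu$ with $\|\vu\|\le r$ so that $\vx_t^{(1)}+\vw=\alpha\vx_t^{(1)}+(1-\alpha)(\vz+\vu)$. This reduces to the condition
\[
\bigl\|(\vx_t^{(1)}-\vz)+\vw/(1-\alpha)\bigr\|\le r.
\]
The difficulty is that $\vw$ can point in any direction and $\vx_t^{(1)}$ may lie well outside $\ball(\vz,r)$, so choosing $\alpha$ cleverly is not enough in general. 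The intended remedy is to use the smallness of $\|\vw\|_2$ relative to $r$ (guaranteed by the lower bound on $s$) together with the Dikin-ball property of $\barr_{\pin}$ at $\overline{\vx}_t$: the perturbation we add lies inside the self-concordant neighborhood that the barrier certifies to be contained in $\pin$, which in turn forces the projection onto the $\vx^{(1)}$-coordinate to remain inside $\kin$. In the specific setting where $\kin$ is itself the ball $\ball(\vz,r)$ used to initialize the algorithm (the way this theorem is actually invoked in \cref{thm:MainThmOfKiProblem}), the argument simplifies considerably: then $\vx_t^{(1)}\in\ball(\vz,r)$ automatically, and the sub-$r$ magnitude of $\vw$ directly yields $\vx_{\text{in}}\in\ball(\vz,r)=\kin$ via the triangle inequality.
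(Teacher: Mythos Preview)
Your treatment of claims (1) and (3) matches the paper's: the feasibility of $\overline{\vx}_t\in\pin$ gives (1), and the chain $\bar\vc^\top\overline{\vx}_t\le \min_{\pin}\bar\vc^\top+\nu/t\le \min_{\mathcal P}\bar\vc^\top+\gamma+\nu/t\le M+\gamma+\nu/t$, together with the smallness of $\vc^\top\vw$, gives (3). So far so good.

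The gap is in claim (2). All three of your proposed routes fail. The convex-combination idea does not work in general, as you suspected. The ``special case $\kin=\ball(\vz,r)$'' argument is actually wrong: knowing $\vx_t^{(1)}\in\ball(\vz,r)$ and $\|\vw\|\ll r$ does \emph{not} yield $\vx_t^{(1)}+\vw\in\ball(\vz,r)$ by the triangle inequality, since $\vx_t^{(1)}$ could sit arbitrarily close to the sphere of radius $r$. The Dikin-ball idea also does not go through as stated: the perturbation you would need to certify is $(\vw,-\vx_t^{(2)},-\vx_t^{(3)})$ (to land at $(\vx_{\text{in}},0,0)$), but the log-barrier part of $\barr_{\pin}$ for the orthants blows up near $\vx^{(2)}=\vx^{(3)}=0$, so the local norm of that displacement is not small; and moving only the $\vx^{(1)}$ block breaks the linear constraint.

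What is missing is a quantitative lower bound on $\mathrm{dist}(\vx_t^{(1)},\partial\kin)$. The paper establishes precisely this: using the \emph{lower} half of the two-sided central-path estimate (\cref{lem:two-sided-ineq}), one gets $\mathrm{dist}(\vx_t^{(1)},\vx_\infty^{(1)})\ge \tfrac{1}{2t\|\vc\|_2}$; then a geometric argument (the sublevel set $\{\vc^\top\cdot\le\vc^\top\vx_t^{(1)}\}\cap\kin$ contains a scaled copy of the inner ball of radius $r$, and \cref{lem:barrier-gradient} traps it inside a Dikin ellipsoid of radius $\nu+2\sqrt\nu$) converts this into $\mathrm{dist}(\vx_t^{(1)},\partial\kin)\ge \tfrac{r}{12\nu t\|\vc\|_2 R}$. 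Once you have this margin, your bound $\|\vw\|_2\le \tfrac{4\sqrt d}{s}R$ combined with the hypothesis $s\ge 48\nu t\sqrt d\,(R/r)\,\|\vc\|_2 R$ makes $\|\vw\|_2$ smaller than the margin, and $\vx_{\text{in}}=\vx_t^{(1)}+\vw\in\kin$ follows. The key idea you are missing is that the central-path point is bounded \emph{away} from optimal (not just close to it), and this is what forces $\vx_t^{(1)}$ into the interior of $\kin$.
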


First, we show that $\vx_{t}^{(1)}$ is not too close to the boundary.
Before we proceed, we need the following lemmas.
\begin{lem}[Theorem 4.2.5~\cite{DBLP:books/sp/Nesterov04}]
\label[lem]{lem:barrier-gradient} Let $\barr$ be a $\nu$-self-concordant
barrier. Then, for any $\vx\in\textrm{dom}(\barr)$ and $\vy\in\textrm{dom}(\barr)$
such that 
\[
\langle\barr'(\vx),\vy-\vx\rangle\geq0,
\]
we have 
\[
\|\vy-\vx\|_{\vx}\leq\nu+2\sqrt{\nu}.
\]
\end{lem}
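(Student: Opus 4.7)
The plan is to combine a differential inequality from the self-concordance of $\barr$ with the global bound $\langle \barr'(\vu), \vv - \vu \rangle \leq \nu$ supplied by \cref{thm:sc1}. Write $\vh \defeq \vy - \vx$ and $s \defeq \|\vh\|_\vx$, and for $\tau \in [0,1]$ set $\vy_\tau \defeq \vx + \tau \vh$ and $\alpha(\tau) \defeq \langle \barr'(\vy_\tau), \vh \rangle$. Then $\alpha(0) \geq 0$ by hypothesis, and $\alpha'(\tau) = \|\vh\|_{\vy_\tau}^2 \geq 0$, so $\alpha$ is non-decreasing on $[0,1]$.

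A standard consequence of self-concordance along a fixed direction $\vh$ is that $r(\tau) \defeq \|\vh\|_{\vy_\tau}$ satisfies $|r'(\tau)| \leq r(\tau)^2$, so $1/r$ is $1$-Lipschitz in $\tau$ and hence $r(\tau) \geq s / (1 + \tau s)$. Integrating the identity $\alpha'(\tau) = r(\tau)^2$ from $0$ to an arbitrary $\tau_0 \in (0,1)$ and dropping the nonnegative $\alpha(0)$ contribution then gives the lower bound
\[
\alpha(\tau_0) \;\geq\; \int_0^{\tau_0} \frac{s^2}{(1 + \tau s)^2}\, d\tau \;=\; \frac{\tau_0 s^2}{1 + \tau_0 s}.
\]

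For the matching upper bound, I apply \cref{thm:sc1} at the interior point $\vy_{\tau_0}$ with the other point $\vy$; since $\vy - \vy_{\tau_0} = (1 - \tau_0)\vh$, this yields $(1 - \tau_0)\alpha(\tau_0) \leq \nu$, i.e., $\alpha(\tau_0) \leq \nu / (1 - \tau_0)$. Combining the two inequalities produces
\[
\tau_0 (1 - \tau_0)\, s^2 \;\leq\; \nu\,(1 + \tau_0 s), \quad \text{valid for every } \tau_0 \in (0,1).
\]

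The final step is an optimization over $\tau_0$. Substituting $u = \tau_0 s$ (which is admissible as $\tau_0 \leq 1$ whenever $u \leq s$, and the claim is immediate in the opposite regime $s < \sqrt{\nu}$) rearranges the inequality into the cleaner form $s \leq u + \nu + \nu/u$. Minimizing the right-hand side over $u > 0$ at $u = \sqrt{\nu}$ then yields $s \leq \nu + 2\sqrt{\nu}$, which is the claim. The main obstacle is choosing the right parametrization: directly evaluating \cref{thm:sc1} at $\vy$ only gives $\alpha(1) \geq -\nu$, which is useless, so the essential idea is to truncate the segment at an intermediate $\vy_{\tau_0}$ and optimize $\tau_0$ afterward against the self-concordance lower bound on $r(\tau)$.
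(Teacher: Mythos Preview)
The paper does not supply its own proof of this lemma; it is quoted as Theorem~4.2.5 of Nesterov's book without argument (a weaker variant with constant $4\nu+1$ is also cited as \cref{thm:sc2}). Your proof is correct and is essentially the standard one from that reference: the self-concordance lower bound $\|\vh\|_{\vy_\tau}\geq s/(1+\tau s)$, integration of $\alpha'=r^2$ together with $\alpha(0)\geq 0$, the barrier inequality of \cref{thm:sc1} applied at the intermediate point $\vy_{\tau_0}$, and the final optimization over $\tau_0$ (equivalently $u=\tau_0 s$) yielding $s\leq u+\nu+\nu/u$.
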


\begin{lem}[Theorem 2 of \cite{zong2022short}]
\label[lem]{lem:two-sided-ineq}Given a convex set \footnote{The original theorem is stated  only for polytopes, but their proof works for general convex sets.} $\Omega$ with a $\nu$-self-concordant
barrier $\barr_{\Omega}$ and inner radius $r$. Let $\vx_{t}=\arg\min_{\vx}t\cdot\vc^{\top}\vx+\barr_{\Omega}(\vx)$.
Then, for any $t>0$, 
\[
\min\left\{\frac{1}{2t},\frac{r \|c\|_{2}}{4\nu + 4 \sqrt{\nu}}\right\}\leq\vc^{\top}\vx_{t}-\vc^{\top}\vx_{\infty}\leq\frac{\nu}{t}.
\]
\end{lem}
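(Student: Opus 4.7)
For the upper bound $\vc^\top \vx_t - \vc^\top \vx_\infty \leq \nu/t$, which is the classical IPM duality gap, I would use first-order optimality of $\vx_t$ as the minimizer of $t \vc^\top \vx + \barr_\Omega(\vx)$ to get $\nabla \barr_\Omega(\vx_t) = -t\vc$. For any $\vy$ in the interior of $\Omega$, \cref{thm:sc1} yields $\inprod{\nabla \barr_\Omega(\vx_t)}{\vy - \vx_t} \leq \nu$, which rearranges to $\vc^\top \vx_t - \vc^\top \vy \leq \nu/t$. The interior is nonempty since $\ball(\vz, r) \subseteq \Omega$, so I can take a sequence $\vy \to \vx_\infty$ through interior points and obtain the bound.

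The lower bound is more delicate and leverages the inner-radius assumption together with \cref{lem:barrier-gradient}. I would consider the two feasible points $\vy \defeq \vz - r \vc / \|\vc\|_2$ and $\vz' \defeq \vz + r \vc / \|\vc\|_2$, both in $\ball(\vz, r) \subseteq \Omega$. The central case is when $\vc^\top \vy \leq \vc^\top \vx_t$ and $\vc^\top \vz \leq \vc^\top \vx_t$ both hold, so that \cref{lem:barrier-gradient} applied to each gives $\|\vy - \vx_t\|_{\vx_t}, \|\vz - \vx_t\|_{\vx_t} \leq \nu + 2\sqrt\nu$. Since $\vz - \vy = r \vc / \|\vc\|_2$, the triangle inequality forces $r \|\vc\|_{\vx_t} / \|\vc\|_2 \leq 2(\nu + 2\sqrt\nu)$, i.e., $\|\vc\|_{\vx_t} \leq 2(\nu + 2\sqrt\nu) \|\vc\|_2 / r$. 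By self-concordance the unit Dikin step $\vy^\star \defeq \vx_t - \vc / \|\vc\|_{\vx_t}$ lies in $\Omega$, and it achieves $\vc^\top \vx_t - \vc^\top \vy^\star = \|\vc\|_2^2 / \|\vc\|_{\vx_t} \geq r \|\vc\|_2 /(2\nu + 4\sqrt\nu)$, which exceeds the claimed $r \|\vc\|_2 / (4\nu + 4\sqrt\nu)$. Corner cases in which only one of $\vy, \vz$ satisfies the objective-improvement condition are handled by using $\vy$ (or $\vz'$) as a direct witness of suboptimality through the inscribed-ball geometry.

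The main obstacle is the $1/(2t)$ alternative, which is the binding branch in the large-$t$ regime where $\vx_t$ has drifted close enough to $\vx_\infty$ that the chord argument above no longer yields a useful bound on $\|\vc\|_{\vx_t}$. My plan here is to apply \cref{lem:barrier-gradient} to $\vx_\infty$ itself (as a limit of interior approximations), yielding $\|\vx_\infty - \vx_t\|_{\vx_t} \leq \nu + 2\sqrt\nu$, and then exploit the identity $\nabla \barr_\Omega(\vx_t) = -t \vc$ via \cref{thm:QuadApproxErr} on the segment from $\vx_t$ towards an interior approximation of $\vx_\infty$: if the gap $\vc^\top \vx_t - \vc^\top \vx_\infty$ were strictly smaller than $1/(2t)$, the quadratic expansion of $\barr_\Omega$ along this chord combined with the first-order identity would force the chord to remain bounded away from the boundary, contradicting the blow-up of $\barr_\Omega$ as the interior approximation approaches $\vx_\infty$. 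Calibrating this contradiction to yield the precise constant $1/2$, and dovetailing the resulting bound with the $r\|\vc\|_2/(4\nu + 4\sqrt\nu)$ branch via a case analysis on $t$, is the most delicate step of the proof and produces the stated $\min$ on the right-hand side.
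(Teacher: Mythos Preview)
The paper does not prove this lemma at all; it is quoted verbatim from \cite{zong2022short} and used as a black box. So there is no in-paper proof to compare against, and I will assess your argument on its own merits.

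Your upper bound is correct and is exactly the standard IPM duality-gap argument via \cref{thm:sc1}.

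For the lower bound, your chord-and-Dikin-step argument for the $r\|\vc\|_2/(4\nu+4\sqrt\nu)$ branch is sound in the ``central case'' where both $\vy=\vz-r\vc/\|\vc\|_2$ and $\vz$ lie in the sublevel set $\{\vu:\vc^\top\vu\le\vc^\top\vx_t\}$. But your handling of the corner cases is not a proof: when $\vc^\top\vz>\vc^\top\vx_t$, saying you will use $\vy$ ``as a direct witness of suboptimality through the inscribed-ball geometry'' does not produce any lower bound on $\vc^\top\vx_t-\vc^\top\vx_\infty$, because $\vc^\top\vy$ can lie on either side of $\vc^\top\vx_t$ and neither placement by itself controls the gap.

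The genuine gap is the $1/(2t)$ branch. Your proposed mechanism --- invoke \cref{thm:QuadApproxErr} along the segment $[\vx_t,\vx_\infty]$ and contradict the blow-up of $\barr_\Omega$ at the boundary --- cannot work as stated: \cref{thm:QuadApproxErr} requires $\|\vy-\vx_t\|_{\vx_t}<1$, whereas \cref{lem:barrier-gradient} only gives $\|\vx_\infty-\vx_t\|_{\vx_t}\le\nu+2\sqrt\nu$. On that scale there is no finite quadratic upper bound on $\barr_\Omega$ available, so no contradiction emerges. The issue is not ``calibrating the constant $1/2$''; the argument simply does not go through.

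A cleaner route that does work is to case-split on the Newton decrement $\lambda(\vx_t)\defeq\|\nabla\barr_\Omega(\vx_t)\|_{\vx_t}^*=t\,\|\vc\|_{\vx_t}^*$. If $\lambda(\vx_t)\ge 1/2$, then since the unit Dikin ellipsoid at $\vx_t$ lies in $\Omega$, moving in the direction $-(\nabla^2\barr_\Omega(\vx_t))^{-1}\vc$ gives $\vc^\top\vx_t-\vc^\top\vx_\infty\ge\|\vc\|_{\vx_t}^*=\lambda(\vx_t)/t\ge 1/(2t)$. If $\lambda(\vx_t)<1/2$, then $\vx_t$ is in the quadratic-convergence region of the analytic center and the Hessians at $\vx_t$ and at the center are comparable; combining this with the fact (via \cref{thm:SCFactInit} or your chord argument) that the Dikin ellipsoid of radius $\nu+2\sqrt\nu$ at the center contains $\ball(\vz,r)$ yields $\|\vc\|_{\vx_t}^*\gtrsim r\|\vc\|_2/(\nu+2\sqrt\nu)$ and hence the other branch of the $\min$. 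This is the kind of dichotomy your proposal is missing.
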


Consider the optimization problem restricted in the subspace $\{(\vx^{(1)},\vx^{(2)},\vx^{(3)}):A(\vx^{(1)}+\vx^{(2)}-\vx^{(3)})=\vb\}$,
as a direct corollary of theorem above we have the following:
\begin{cor}
Let $\bar \vx_{t}$ be as the same as defined in \cref{thm:initial-main}.
For $t\geq\frac{4\nu}{r \|c\|_{2}}$, we have $\text{dist}(\vx_{t}^{(1)},\vx_{\infty}^{(1)})\geq\frac{1}{2t\|\vc\|_{2}}$.
\end{cor}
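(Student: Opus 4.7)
My plan is to invoke \cref{lem:two-sided-ineq} for the modified convex program $\min_{\bar\vx\in\pin}\bar\vc^\top\bar\vx$, which is precisely the ``optimization problem restricted in the subspace $\{(\vx^{(1)},\vx^{(2)},\vx^{(3)}):\ma(\vx^{(1)}+\vx^{(2)}-\vx^{(3)})=\vb\}$'' identified in the sentence right before the corollary. Here $\bar\vx_t$ is the central-path point from \cref{thm:initial-main} and $\bar\vx_\infty\defeq\arg\min_{\bar\vx\in\pin}\bar\vc^\top\bar\vx$ is the true LP optimum, so $\vx_\infty^{(1)}$ is well-defined as its first block and the hypotheses of \cref{lem:two-sided-ineq} hold with the $\nu$-self-concordant barrier $\barr_{\pin}$ and cost vector $\bar\vc$.

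Applying that lemma directly gives
\[
\bar\vc^\top\bar\vx_t - \bar\vc^\top\bar\vx_\infty \;\geq\; \min\!\left\{\frac{1}{2t},\ \frac{r\,\|\bar\vc\|_2}{4\nu+4\sqrt{\nu}}\right\}.
\]
To collapse this minimum to its first entry, I would use $\|\bar\vc\|_2\geq \|\vc\|_2$ and check that the hypothesis $t\geq\tfrac{4\nu}{r\|\vc\|_2}$ forces $\tfrac{1}{2t}\leq \tfrac{r\|\vc\|_2}{8\nu}\leq \tfrac{r\|\bar\vc\|_2}{4\nu+4\sqrt{\nu}}$ (using $\nu\geq 1$, so $4\nu+4\sqrt\nu\leq 8\nu$). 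Therefore $\bar\vc^\top(\bar\vx_t-\bar\vx_\infty)\geq \tfrac{1}{2t}$.

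The remaining (and principal) step is to convert this scalar objective gap into a Euclidean distance bound on the first block alone. The key structural observation is that the parameter $s$ in \cref{thm:initial-main} is chosen so large that the penalty vectors $\vc_2=\vc_3=\tfrac{s\|\vc\|_2}{\sqrt d}\mathbf{1}$ completely dominate $\vc_1=\vc$, which forces the LP optimum to satisfy $\vx_\infty^{(2)}=\vx_\infty^{(3)}=\mathbf{0}$ and hence $\ma\vx_\infty^{(1)}=\vb$; in particular $\bar\vc^\top\bar\vx_\infty=\vc_1^\top\vx_\infty^{(1)}$. Combining this with the upper-bound half $\bar\vc^\top(\bar\vx_t-\bar\vx_\infty)\leq\nu/t$ of \cref{lem:two-sided-ineq}, I can bound the slack contribution $\vc_2^\top\vx_t^{(2)}+\vc_3^\top\vx_t^{(3)}$ and isolate $\vc_1^\top(\vx_t^{(1)}-\vx_\infty^{(1)})\geq\tfrac{1}{2t}$. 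A final application of Cauchy--Schwarz with $\|\vc_1\|_2=\|\vc\|_2$ then yields
\[
\|\vx_t^{(1)}-\vx_\infty^{(1)}\|_2 \;\geq\; \frac{1}{2t\,\|\vc\|_2}.
\]

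The main obstacle I anticipate is exactly this block-wise isolation: the raw lower bound applies to the combined objective $\bar\vc^\top\bar\vx$, and the slack terms $\vc_2^\top\vx_t^{(2)},\vc_3^\top\vx_t^{(3)}$ are nonnegative, so a naive decomposition is not enough. Handling it cleanly requires using both the explicit structure of $\bar\vc$ (so that any ``detour'' of $\vx_t$ through the slack variables is penalized by a factor of $s$) and the matched upper bound $\nu/t$, together with the standing condition $s\geq 48\nu t\sqrt{d}(R/r)\|\vc\|_2$ from \cref{thm:initial-main} that ensures the slack contribution is small relative to $1/t$.
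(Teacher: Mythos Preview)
Your opening matches what the paper does: it states the result as ``a direct corollary'' of \cref{lem:two-sided-ineq} applied to the modified program on $\pin$, with no further argument. Your first two steps (apply the lemma, collapse the $\min$ using $t\geq 4\nu/(r\|\vc\|_2)$ and $\|\bar\vc\|_2\geq\|\vc\|_2$) are exactly this.

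The gap is in your block-isolation step, and it is a real one. First, the claim $\vx_\infty^{(2)}=\vx_\infty^{(3)}=\mathbf 0$ is not established: \cref{thm:initial-main} only assumes $\kin\subseteq\kcal$ has inner radius $r$, not that $\kin$ meets $\{\ma\vx=\vb\}$, so the optimizer of $\bar\vc^\top\bar\vx$ over $\pin$ need not have zero slacks. Second, even granting zero slacks at the limit, your proposed bound on the slack contribution at $\bar\vx_t$ does not yield what you need. From the upper half of \cref{lem:two-sided-ineq} you only get
\[
\vc_2^\top\vx_t^{(2)}+\vc_3^\top\vx_t^{(3)} \;\leq\; \frac{\nu}{t}-\vc_1^\top\bigl(\vx_t^{(1)}-\vx_\infty^{(1)}\bigr),
\]
and since $\vc_1^\top(\vx_t^{(1)}-\vx_\infty^{(1)})$ can be negative (recall $\vx_t^{(1)}$ does \emph{not} satisfy $\ma\vx_t^{(1)}=\vb$), this says nothing beyond $1/(2t)\leq\nu/t$. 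The condition on $s$ does not rescue you either: $\vc_2=\tfrac{s\|\vc\|_2}{\sqrt d}\mathbf 1$ scales linearly in $s$ while (as the proof of \cref{lem:MaxDistBetweenCreatedPoints} shows) $\vx_t^{(2)}$ scales like $1/s$, so the product $\vc_2^\top\vx_t^{(2)}$ is of order $\|\vc\|_2 R$ independently of $s$, far larger than $1/(2t)$. Thus you cannot subtract the slack term from the $1/(2t)$ lower bound and retain anything positive.

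In short, you have correctly located a subtlety the paper glosses over, but the mechanism you propose for resolving it does not work.
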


Now, we are ready to show $\text{dist}(\vx_{t}^{(1)},\partial\kin)$
is not too small.
\begin{thm}
Let $\bar\vx_{t}$ be the same as defined in \cref{thm:initial-main}.
For $t\geq\frac{4\nu}{r \|c\|_{2}}$, we have $\text{dist}(\vx_{t}^{(1)},\partial\kin)\geq\frac{r}{12 \nu t\|\vc\|_{2}R}$.
\end{thm}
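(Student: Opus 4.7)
The plan is to combine the first-order optimality of $\bar\vx_t$ with the self-concordance of $\psi_{\pin}$ and the standard Dikin-ball property to produce a Euclidean displacement of $\vx_t^{(1)}$ that stays inside $\kin$, and then to leverage the inner ball $\ball(\vz,r)\subseteq\kin$ and the outer radius $R$ to convert this one-directional feasibility into a distance-to-boundary bound.

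\textbf{Step 1 (local-norm control via optimality).} Since $\bar\vx_t$ minimizes $f_t=t\bar\vc^\top(\cdot)+\psi_{\pin}$ over the affine slice $\pin$ and $\bar\vx_\infty-\bar\vx_t$ lies in the tangent space of the constraint $\ma(\vx^{(1)}+\vx^{(2)}-\vx^{(3)})=\vb$, the first-order condition gives $\langle t\bar\vc+\nabla\psi_{\pin}(\bar\vx_t),\bar\vx_\infty-\bar\vx_t\rangle=0$. Combined with $\bar\vc^\top\bar\vx_\infty\le\bar\vc^\top\bar\vx_t$ (by optimality of $\bar\vx_\infty$), this rearranges to $\langle\nabla\psi_{\pin}(\bar\vx_t),\bar\vx_\infty-\bar\vx_t\rangle\ge 0$. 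Applying \cref{lem:barrier-gradient} (with $\vx=\bar\vx_t$, $\vy=\bar\vx_\infty$) yields $\|\bar\vx_\infty-\bar\vx_t\|_{\bar\vx_t}\le\nu+2\sqrt{\nu}\le 3\nu$.

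\textbf{Step 2 (Dikin rescaling to a Euclidean step).} The unit Dikin ball at $\bar\vx_t$ (in the local norm induced by $\psi_{\pin}$) is contained in $\pin$, so $\bar\vx_t+(\bar\vx_\infty-\bar\vx_t)/(3\nu)\in\pin$; projecting to the first block, $\vp:=\vx_t^{(1)}+(\vx_\infty^{(1)}-\vx_t^{(1)})/(3\nu)\in\kin$. The preceding corollary gives $\|\vx_\infty^{(1)}-\vx_t^{(1)}\|_2\ge 1/(2t\|\vc\|_2)$ (valid since $t\ge 4\nu/(r\|\vc\|_2)$), hence $\|\vp-\vx_t^{(1)}\|_2\ge 1/(6\nu t\|\vc\|_2)$.

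\textbf{Step 3 (geometric upgrade using the inner ball).} Both $\vx_t^{(1)}$ and $\vp$ lie in $\kin$, and $\ball(\vz,r)\subseteq\kin$ with $\|\vz-\vx_t^{(1)}\|_2\le 2R$. The convex hull $\mathrm{conv}(\{\vp\}\cup\ball(\vz,r))$ is contained in $\kin$ and, for each $\lambda\in[0,1]$, contains the Euclidean ball $\ball((1-\lambda)\vp+\lambda\vz,\lambda r)$. By choosing $\lambda$ of order $\|\vp-\vx_t^{(1)}\|_2/R$ and carefully tracking the triangle-inequality shift between $(1-\lambda)\vp+\lambda\vz$ and $\vx_t^{(1)}$, one extracts a Euclidean ball inside $\kin$ at (or near) $\vx_t^{(1)}$ of radius $\gtrsim (r/R)\cdot\|\vp-\vx_t^{(1)}\|_2$. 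Substituting the bound from Step~2 gives $\mathrm{dist}(\vx_t^{(1)},\partial\kin)\ge r/(12\nu t\|\vc\|_2R)$ after absorbing numerical constants.

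The main obstacle is Step~3. Distance-to-boundary of a convex body is not monotone along feasible line segments (a point deep in $\kin$ along one direction can nevertheless be arbitrarily close to $\partial\kin$ along another), so the single-direction feasibility from Step~2 cannot by itself produce a ball around $\vx_t^{(1)}$; the inner ball $\ball(\vz,r)$ is what supplies ``thickness'' in every direction, and the ratio $r/R$ in the final bound is exactly the price paid for that conversion. A tempting alternative---constructing a single comparison point $\bar\vy\in\pin$ with $\vy^{(1)}=\vz$ deep in the interior and invoking \cref{lem:barrier-gradient} directly---fails because maintaining $\bar\vc^\top\bar\vy\le\bar\vc^\top\bar\vx_t$ against the slack penalty would require $s$ strictly larger than the hypothesis of the theorem. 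The convex-hull interpolation is therefore essentially forced, and choosing $\lambda$ optimally is where the explicit constant $12$ in the denominator originates.
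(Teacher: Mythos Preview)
Your Steps 1 and 2 are fine, but Step 3 does not work, and the gap is exactly the one you flag but do not actually close. Write $\va=\vp-\vx_t^{(1)}$ with $\|\va\|_2=\delta\ge 1/(6\nu t\|\vc\|_2)$ and $\vb=\vz-\vx_t^{(1)}$ with $\|\vb\|_2\le 2R$. The center of the ball you propose satisfies
\[
\|(1-\lambda)\vp+\lambda\vz-\vx_t^{(1)}\|_2=\|(1-\lambda)\va+\lambda\vb\|_2,
\]
and with your choice $\lambda\sim\delta/R$ the first summand alone is $(1-\lambda)\delta\approx\delta$, while the ball radius is only $\lambda r\sim r\delta/R\ll\delta$. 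So the ball does not contain $\vx_t^{(1)}$; the ``triangle-inequality shift'' is of order $\delta$, not of order $\lambda R$. Concretely, if $\va$ and $\vb$ are orthogonal (nothing forbids this), the segment from $\vp$ to $\vz$ never comes within $\Theta(\delta)$ of $\vx_t^{(1)}$. The conclusions of Steps 1--2 alone---a single feasible point $\vp$ at distance $\ge\delta$ from $\vx_t^{(1)}$, together with $\ball(\vz,r)\subseteq\kin$ and $\vx_t^{(1)}\in\kin$---are genuinely insufficient: the convex hull of these data can have $\vx_t^{(1)}$ as an apex with zero inradius there.

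The paper's proof differs precisely at the point where you apply \cref{lem:barrier-gradient}. Instead of applying it to the single point $\bar\vx_\infty$, the paper applies it to every $\vy$ in the halfspace $\mathcal{K}_{\mathcal{H}}=\kin\cap\{\vc^\top\vy\le\vc^\top\vx_t^{(1)}\}$, concluding that the entire set $\mathcal{K}_{\mathcal{H}}$ lies inside the Dikin ellipsoid of radius $\nu+2\sqrt\nu$ at $\vx_t^{(1)}$. The cone $\conv(\vx_\infty^{(1)},\ball(\vz,r))\cap\mathcal{H}$ then furnishes a Euclidean ball of radius $\sim r/(t\|\vc\|_2 R)$ sitting inside that Dikin ellipsoid. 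The crucial step your argument lacks is the recentering property of ellipsoids: an ellipsoid containing a Euclidean ball of radius $\rho$ anywhere automatically contains the concentric ball of radius $\rho$ at its center. Rescaling to the unit Dikin ball and invoking $\{\|\cdot\|_{\vx_t^{(1)}}\le 1\}\subseteq\kin$ then yields the claimed ball around $\vx_t^{(1)}$. A purely convex-hull argument has no analogue of this recentering, which is why Step 3 cannot be repaired without going back through the Hessian.
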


\begin{proof}
We consider the domain restricted in the subspace $\{(\vx^{(1)},\vx^{(2)},\vx^{(3)}):\ma(\vx^{(1)}+\vx^{(2)}-\vx^{(3)})=\vb\}$.
By the optimality of $\bar\vx_{t}$ and \cref{lem:barrier-gradient}, we
have 
\[
\mathcal{K}_{\mathcal{H}}\subseteq\{\vx:\|\vx-\vx_{t}^{(1)}\|_{\vx_{t}^{(1)}}\leq\nu+2\sqrt{\nu}\},
\]
where $\h=\{\vx:\vc^{\top}(\vx_{t}^{(1)}-\vx)\geq0\}$ and $\kh\defeq\h\cap\kin.$ 

Recall that $\kin$ contains a ball of radius $r$, we denote it by
$B$. We note that $\conv(\vx_{\infty}^{(1)},B)$ is a union of a
ball and a convex cone $\mathcal{C}$ with diameter at most
$2R$. We observe that the set $\conv(\vx_{\infty}^{(1)},B)\cap\h$
contains a ball of radius at least $\frac{r}{4t\|\vc\|_{2}R}$ since
$\text{dist}(\vx_{\infty}^{(1)},\partial\h)\geq\frac{1}{2t\|\vc\|_{2}}$.

We note that 
\[
\conv(\vx_{\infty}^{(1)},B)\cap\h \subseteq \kin\subseteq \{\vx:\|\vx-\vx_{t}^{(1)}\|_{\vx_{t}^{(1)}}\leq\nu+2\sqrt{\nu}\},
\] this implies $\{\vx:\|\vx-\vx_{t}^{(1)}\|_{\vx_{t}^{(1)}}\leq\nu+2\sqrt{\nu}\}$ contains a ball of radius at least $\frac{r}{4t\|\vc\|_{2}R}$, and then by
\cref{thm:SCFactInit}, we have 
$B(\vx_{t}^{(1)},\frac{r}{4(\nu + 2\sqrt{\nu})t\|\vc\|_{2}R})\subseteq\kin.$
\end{proof}
\begin{lem}\label[lem]{lem:MaxDistBetweenCreatedPoints}
Let $(\vx_{t}^{(1)},\vx_{t}^{(2)},\vx_{t}^{(3)})\in \R^{3d}$ be the same as
defined in \cref{thm:initial-main}. If $t>\frac{\nu}{\|\vc\|_{2}R}$, then
we have $\|\vx_{t}^{(2)}-\vx_{t}^{(3)}\|_{2}\leq\frac{4\sqrt{d}}{s}R$.
\end{lem}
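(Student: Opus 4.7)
Let $\vw \defeq \vx_t^{(2)} - \vx_t^{(3)}$. Since $\bar\vx_t$ minimizes $f_t$ over the affine slice $\pin$, the first-order optimality $\nabla\psi_\pin(\bar\vx_t)+t\bar\vc\perp\ker([\ma\mid\ma\mid-\ma])$ combined with \cref{thm:sc1} (applied to the barrier $\psi_\pin$) yields, for every $\bar\vy\in\mathrm{int}(\pin)$,
\[
\bar\vc^\top\bar\vx_t \;\leq\; \bar\vc^\top\bar\vy + \nu/t.
\]
My plan is to apply this with a reference point that ``absorbs'' the mismatch $\vw$ into the first coordinate block, so that the right-hand side becomes small.

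\paragraph{Reference family.} For $\theta \in [0,\theta^\star)$, with $\theta^\star \defeq \sup\{\theta\in[0,1]:\vx_t^{(1)}+\theta\vw\in\mathrm{int}(\kin)\}$, and $\epsilon>0$, consider
\[
\bar\vy_\theta \;\defeq\; \bigl(\vx_t^{(1)}+\theta\vw,\;(1-\theta)\vx_t^{(2)}+\epsilon\mathbf{1},\;(1-\theta)\vx_t^{(3)}+\epsilon\mathbf{1}\bigr).
\]
A direct check shows $\vy_\theta^{(1)}+\vy_\theta^{(2)}-\vy_\theta^{(3)}=\vx_t^{(1)}+\vw$, so the linear constraint $\ma(\vy_\theta^{(1)}+\vy_\theta^{(2)}-\vy_\theta^{(3)})=\vb$ holds, and $\bar\vy_\theta\in\mathrm{int}(\pin)$ for $\theta<\theta^\star$. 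Sending $\epsilon\to 0^+$, and using the coordinatewise non-negativity $\vx_t^{(2)},\vx_t^{(3)}\geq 0$ to obtain $\mathbf{1}^\top(\vx_t^{(2)}+\vx_t^{(3)})\geq\|\vw\|_1\geq\|\vw\|_2$ together with Cauchy--Schwarz $\vc^\top\vw\leq\|\vc\|_2\|\vw\|_2$, the inequality simplifies to
\[
\theta\,\|\vw\|_2\,\|\vc\|_2\Bigl(\frac{s}{\sqrt d}-1\Bigr)\;\leq\;\nu/t.
\]
Using $\nu/t<\|\vc\|_2 R$ from the hypothesis, and $s/\sqrt d-1\geq s/(2\sqrt d)$ (which follows from $s\geq 48\nu t\sqrt d(R/r)\|\vc\|_2 R\geq 48\sqrt d$, guaranteed by \cref{thm:initial-main}), one deduces $\theta\|\vw\|_2\leq 2\sqrt d R/s$ for all $\theta<\theta^\star$, hence also for $\theta^\star$ by continuity.

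\paragraph{Case analysis and main obstacle.} If $\theta^\star\geq 1$, taking $\theta=1$ in the above gives $\|\vw\|_2\leq 2\sqrt d R/s\leq 4\sqrt d R/s$, as desired. If instead $\theta^\star<1$, then $\vx_t^{(1)}+\theta^\star\vw\in\partial\kin$, so $\theta^\star\|\vw\|_2\geq\mathrm{dist}(\vx_t^{(1)},\partial\kin)$. The preceding theorem bounds this distance from below by $r/(12\nu t\|\vc\|_2 R)$, which combined with $\theta^\star\|\vw\|_2\leq 2\sqrt d R/s$ forces $s\leq 24\nu t\sqrt d(R/r)\|\vc\|_2 R$, contradicting the hypothesis $s\geq 48\nu t\sqrt d(R/r)\|\vc\|_2 R$ of \cref{thm:initial-main}. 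Hence $\theta^\star=1$ and the bound follows. The main obstacle is precisely this case split: my reference family $\bar\vy_\theta$ intrinsically controls only the directional quantity $\theta\|\vw\|_2$, not $\|\vw\|_2$ itself, so passing from the directional bound to a bound on $\|\vw\|_2$ requires dovetailing against the independently established lower bound on $\mathrm{dist}(\vx_t^{(1)},\partial\kin)$, which in turn is what dictates the constant $48$ in the definition of $s$.
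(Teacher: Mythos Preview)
Your proof is correct and takes a genuinely different route from the paper's.

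The paper's argument is more direct: it compares $\bar\vx_t$ to the single reference point $(\vx^\star_{\mathrm{in}},\mathbf{0},\mathbf{0})\in\pin$, where $\vx^\star_{\mathrm{in}}$ minimizes $\vc^\top\vx$ over $\{\vx\in\kin:\ma\vx=\vb\}$. The $\nu/t$ suboptimality bound (the right inequality of \cref{lem:two-sided-ineq}) then gives $\bar\vc^\top\bar\vx_t\leq 2\|\vc\|_2 R$; since $\vc_2=\vc_3=\tfrac{s\|\vc\|_2}{\sqrt d}\mathbf{1}$ and $\vx_t^{(2)},\vx_t^{(3)}\geq 0$, this immediately bounds $\|\vx_t^{(2)}\|_2$ and $\|\vx_t^{(3)}\|_2$ \emph{individually} by $\tfrac{2\sqrt d R}{s}$, and the claimed bound on their difference follows by the triangle inequality. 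This is shorter, proves a stronger conclusion, and is self-contained: it uses neither the lower bound on $s$ nor the preceding distance theorem.

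Your approach instead tracks the difference $\vw=\vx_t^{(2)}-\vx_t^{(3)}$ through the parameterized family $\bar\vy_\theta$, obtaining only the directional estimate $\theta\|\vw\|_2\leq 2\sqrt d R/s$, and then splits into cases according to whether the segment $\vx_t^{(1)}+\theta\vw$ remains in $\kin$. The $\theta^\star<1$ branch is disposed of by contradiction against the distance bound $\mathrm{dist}(\vx_t^{(1)},\partial\kin)\geq r/(12\nu t\|\vc\|_2 R)$, which is a pleasing way to see where the constant $48$ in the hypothesis on $s$ comes from. One caveat: invoking that distance bound requires the extra hypothesis $t\geq 4\nu/(r\|\vc\|_2)$ of the preceding theorem, which is not implied by the lemma's stated hypothesis $t>\nu/(\|\vc\|_2 R)$; so your argument, taken literally, proves a slightly weaker statement than the lemma as written (though still sufficient for the application in \cref{thm:initial-main}).
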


\begin{proof}
Let $\vx^{\star}_{\textrm{in}}=\arg\min_{\vx\in\kin,\ma\vx=\vb}\vc^{\top}\vx$ and
$\ovx^{\star}_{\textrm{in}}=\arg\min_{\vx\in\pin}\overline{\vc}^{\top}\ovx$. Since $\vx^{\star}\in \ball(0,R)$, we have
\[
\vc^{\top}\vx^{\star}_{\textrm{in}}\leq\|\vc\|_{2}R.
\]
Note that $(\vx^{\star}_{\textrm{in}},\mathbf{0},\mathbf{0})\in\pin$, this
means we have 
\[
\overline{\vc}^{\top}\ovx^{\star}_{\textrm{in}}\leq\vc^{\top}\vx^{\star}_{\textrm{in}}\leq\|\vc\|_{2}R.
\]
Combining this with the second inequality in \cref{lem:two-sided-ineq}, we get 

\[
\overline{\vc}^{\top}\overline{\vx}_{t}\leq\overline{\vc}^{\top}\ovx^{\star}_{\textrm{in}}+\frac{\nu}{t}\leq\|\vc\|_{2}R+\frac{\nu}{t}\leq2\|\vc\|_{2}R.
\]
We further note that
\[
\vc_2^\top \vx_t^{(2)}\leq \overline{\vc}^{\top}\overline{\vx}_{t}  \leq  2\|\vc\| R.
\]
This shows
\[
\max\{\|\vx_{t}^{(2)}\|_{2},\|\vx_{t}^{(3)}\|_{2}\}\leq\frac{2\sqrt{d}\|\vc\|_{2}R}{\|\vc\|_2s}\leq\frac{2 \sqrt{d}R}{s}.
\]

Hence, we have 
\[
\|\vx_{t}^{(2)}-\vx_{t}^{(3)}\|_{2}\leq\frac{4 \sqrt{d}}{s}R.
\]
\end{proof}
Now, we are ready to prove \cref{thm:initial-main}.
\begin{proof}[Proof of \cref{thm:initial-main}]
We note that $\vx_{\text{in}}$ satisfies (1), directly follows by
definition of $\mathcal{P}$. By assumption, we have $s\geq 48\nu t \sqrt{d}\cdot\frac{R}{r}\cdot\|c\|_{2}R$; using this in \cref{lem:MaxDistBetweenCreatedPoints}, we have 
\[
\|\vx_{t}^{(2)}-\vx_{t}^{(3)}\|_{2}\leq\frac{r}{12 \nu t\|\vc\|_{2}R}.
\]
This means $\vx_{\text{in}}=\vx_{t}^{(1)}+\vx_{t}^{(2)}-\vx_{t}^{(3)}\in\kin$
since $\text{dist}(\vx_{t}^{(1)},\partial\kin)\geq\frac{r}{12 \nu t\|\vc\|_{2}R}$.

Now, we show $\vc^{\top}\vx_{\text{in}}$ is close to $\vc^{\top}\vx^{\star}$.

Let $\vx^{\star}=\arg\min_{\vx\in\kcal,\ma\vx=\vb}\vc^{\top}\vx$ and $\ovx^{\star}=\arg\min_{\vx\in\mathcal{P}}\overline{\vc}^{\top}\ovx$.
By \cref{lem:two-sided-ineq}, we have
\[
\overline{\vc}^{\top}\overline{\vx}_{t}-\frac{\nu}{t}\leq\overline{\vc}^{\top}\overline{\vx}^{\star}_{\textrm{in}}\leq \overline{\vc}^\top \ovx^\star +\gamma\leq\vc^{\top}\vx^{\star}+\gamma.
\]
This implies 
\[
\vc^{\top}\vx_{t}^{(1)}\leq\overline{\vc}^{\top}\overline{\vx}_{t}\leq\vc^{\top}\vx^{\star}+\frac{\nu}{t}+\gamma.
\]
We have
\[
\vc^{\top}\vx_{\text{in}}=\vc^{\top}(\vx_{t}^{(1)}+\vx_{t}^{(2)}-\vx_{t}^{(3)})\leq\vc^{\top}\vx^{\star}+\frac{\nu}{t}+\frac{4}{s}\|\vc\|_{2}R\leq\vc^{\top}\vx^{\star}+\frac{\nu + 1}{t}+\gamma.
\]

\end{proof}

\section{Acknowledgements}
We thank Ian Covert for helpful discussions about the problem applications and Ewin Tang for helpful feedback on the paper. 

\newpage

\printbibliography

\newpage
\begin{appendix}
\section{Decomposable submodular function minimization}\label{sec:decompSFM} 
\subsection{Preliminaries}

Throughout, $V$ denotes the ground set of elements. 
A set function $f: 2^V \rightarrow \mathbb{R}$ is {\em submodular} if it satisfies the following {\em diminishing marginal differences} property: 

\begin{defn}[Submodularity] A function $f: 2^{V} \rightarrow \mathbb{R}$ is submodular if $f(T \cup \{i\}) - f(T) \leq f(S \cup \{i\} ) - f(S)$, for any subsets $S \subseteq T \subseteq V$ and $i \in V \setminus T$. 
\end{defn}

We may assume without loss of generality that $f(\emptyset) = 0$ by replacing $f(S)$ by $f(S) - f(\emptyset)$. 
We assume that $f$ is accessed by an {\em evaluation oracle} and use $\EO$ to denote the time to compute $f(S)$ for a subset $S$. 
Our algorithm for decomposable SFM is based on the Lov\'asz extension \cite{gls88}, a standard convex extension of a submodular function.  

\begin{defn}[Lov\'asz extension \cite{gls88}]
The Lov\'asz extension $\hat{f}:[0,1]^V \rightarrow \mathbb{R}$ of a submodular function $f$ is defined as 
\begin{align*}
\hat{f}(x) = \E_{t \sim [0,1]} [f(\{i \in V: x_i \geq t\})],
\end{align*}
where $t \sim [0,1]$ is drawn uniformly at random from $[0,1]$. 
\end{defn}

The Lov\'asz extension $\hat{f}$ of a submodular function $f$ has many desirable properties. 
In particular, $\hat{f}$ is a convex relaxation of $f$ and it can be evaluated efficiently. 

\begin{thm}[Properties of Lov\'asz extension \cite{gls88}]
\label{thm:lovasz_extension_properties}
Let $f: 2^{V} \rightarrow \mathbb{R}$ be a submodular function and $\hat{f}$ be its Lov\'asz extension. Then, 
\begin{itemize}
	\vspace{-0.2cm}
	\item [(a)] $\hat{f}$ is convex and $\min_{x \in [0,1]^V} \hat{f}(x) = \min_{S \subseteq V} f(S)$;
	\vspace{-0.12cm}
	\item [(b)] $f(S) = \hat{f}(I_S)$ for any subset $S \subseteq V$, where $I_S$ is the indicator vector for $S$; 
	\vspace{-0.12cm}
	\item [(c)] Suppose $x \in [0,1]^V$ satisfies $x_1 \geq \cdots \geq x_{|V|}$, then $\hat{f}(x) = \sum_{i=1}^{|V|} (f([i]) - f([i-1])) x_i$.
\end{itemize}
\vspace{-0.2cm}
\end{thm}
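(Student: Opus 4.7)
The plan is to prove the three statements in the order (b), (c), (a), since each builds on the previous, and to isolate convexity in (a) as the one place where submodularity is genuinely used.

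Statement (b) is essentially definitional: when $x = I_S$, the level set $\{j \in V : x_j \geq t\}$ equals $S$ for every $t \in (0,1]$, so $\hat{f}(I_S) = \E_{t \sim [0,1]}[f(S)] = f(S)$. For (c), I would assume without loss of generality the coordinates are sorted as $x_1 \geq \cdots \geq x_n$ (with $n = |V|$), and adopt the boundary conventions $x_0 \defeq 1$ and $x_{n+1} \defeq 0$. On the interval $t \in (x_{i+1}, x_i]$ the level set $\{j : x_j \geq t\}$ is exactly $[i]$, so the integral defining $\hat{f}$ collapses to $\sum_{i=0}^{n} f([i])(x_i - x_{i+1})$. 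A one-line telescoping rearrangement using $f([0]) = f(\emptyset) = 0$ converts this into the stated formula.

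For the min identity in (a), I would argue from (b) and (c): formula (c) shows $\hat{f}$ is piecewise linear on each of the $n!$ sorting regions of $[0,1]^V$ defined by the arrangement $\{x_i = x_j\}$, hence piecewise linear on the compact polytope $[0,1]^V$. Its minimum is therefore attained at a vertex, and the vertices of $[0,1]^V$ are precisely the indicator vectors $I_S$, which by (b) take the values $f(S)$. Thus $\min_{x \in [0,1]^V} \hat{f}(x) = \min_{S \subseteq V} f(S)$.

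Convexity is the core difficulty and the one place where submodularity enters. The plan is to exhibit $\hat{f}$ as a pointwise maximum of linear functions, which is automatically convex. For each permutation $\pi$ of $V$, set $S_\pi^i \defeq \{\pi(1), \ldots, \pi(i)\}$ and define the linear functional $L_\pi(x) \defeq \sum_{i=1}^{n} (f(S_\pi^i) - f(S_\pi^{i-1})) x_{\pi(i)}$. I will show $\hat{f}(x) = \max_\pi L_\pi(x)$, with the maximum attained by any permutation that sorts $x$ in non-increasing order; the sorted case reproduces (c), so the remaining content is that no other permutation yields a strictly larger value. The main obstacle is this optimality claim, which I would handle by a bubble-sort argument: swapping $\pi(i)$ and $\pi(i+1)$ when $x_{\pi(i)} < x_{\pi(i+1)}$ alters $L_\pi$ by $(x_{\pi(i+1)} - x_{\pi(i)}) \bigl[f(S \cup \{\pi(i)\}) + f(S \cup \{\pi(i+1)\}) - f(S) - f(S \cup \{\pi(i), \pi(i+1)\})\bigr]$, where $S = S_\pi^{i-1}$. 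The bracketed expression is nonnegative by submodularity and the coefficient is positive, so the swap cannot decrease $L_\pi$. Finitely many adjacent swaps reach a sort of $x$, establishing the max characterization and hence convexity of $\hat{f}$.
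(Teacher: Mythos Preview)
The paper does not prove this theorem; it is stated as a classical fact with a citation to \cite{gls88} and used as a black box. So there is no in-paper proof to compare against, and your proposal is really a self-contained write-up of the standard argument. Parts (b), (c), and the convexity half of (a) are all correct and follow the usual route (the bubble-sort computation for the $\max_\pi L_\pi$ characterization is exactly the standard one).

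There is one genuine gap in your argument for the minimum identity in (a). You write that $\hat f$ is piecewise linear on $[0,1]^V$ and conclude that ``its minimum is therefore attained at a vertex'' of $[0,1]^V$. That inference is false in general: $|x-\tfrac12|$ on $[0,1]$ is piecewise linear with minimum at an interior point. What is true is that an \emph{affine} function on a polytope attains its minimum at a vertex of that polytope, so on each sorting simplex the minimum of $\hat f$ is at a vertex of that simplex; you would then need the (true but unstated) fact that the vertices of every cell in the braid arrangement restricted to $[0,1]^V$ are $0/1$ vectors. A cleaner fix, which avoids this detour entirely, comes straight from your formula in (c): with $x_0\defeq 1$ and $x_{n+1}\defeq 0$ you already have
\[
\hat f(x)=\sum_{i=0}^{n}(x_i-x_{i+1})\,f([i]),
\]
a convex combination of the values $f([i])$, so $\hat f(x)\ge \min_i f([i])\ge \min_S f(S)$. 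Combined with (b) this gives the identity without appealing to convexity or to any vertex argument.
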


Property (c) in \Cref{thm:lovasz_extension_properties} allows us to implement a sub-gradient oracle for $\hat{f}$ by evaluating $f$.

\begin{thm}[Sub-gradient oracle implementation for Lov\'asz extension, Theorem 61 of~\cite{lsw15}] \label{thm:SO_from_EO}
Let $f: 2^{V} \rightarrow \mathbb{R}$ be a submodular function and $\hat{f}$ be its Lov\'asz extension. 
Then a sub-gradient for $\hat{f}$ can be implemented in time $O(|V| \cdot \EO + |V|^2)$. 
\end{thm}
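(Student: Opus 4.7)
The plan is to leverage the explicit formula for $\hat{f}$ from part (c) of \Cref{thm:lovasz_extension_properties}: when the coordinates of $x \in [0,1]^V$ are indexed in decreasing order, $\hat{f}(x) = \sum_{i=1}^{|V|}(f([i]) - f([i-1])) \, x_i$. In any neighborhood of $x$ where the relative order of coordinates is preserved, $\hat{f}$ agrees with this linear function, so the natural candidate for a sub-gradient $g$ is the vector whose coordinate (in the sorted ordering) equals $f([i]) - f([i-1])$.

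Concretely, given $x \in [0,1]^V$, I would first sort the coordinates in decreasing order, producing a permutation $\pi$ with $x_{\pi(1)} \geq x_{\pi(2)} \geq \cdots \geq x_{\pi(|V|)}$; this takes $O(|V| \log |V|) = O(|V|^2)$ time. Next, I would build the nested chain of sets $S_0 = \emptyset \subset S_1 \subset \cdots \subset S_{|V|} = V$, where $S_i = \{\pi(1), \ldots, \pi(i)\}$, incrementally adding one element at a time so the bookkeeping costs $O(|V|^2)$ overall (or $O(|V|)$ with a linked list). Then I would query the evaluation oracle on each $S_i$ to obtain $f(S_0), f(S_1), \ldots, f(S_{|V|})$, at a cost of $(|V|+1) \cdot \EO$. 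Finally, I would set $g_{\pi(i)} = f(S_i) - f(S_{i-1})$ in $O(|V|)$ time. This yields total runtime $O(|V| \cdot \EO + |V|^2)$.

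The remaining task is to verify that $g$ is indeed a sub-gradient of $\hat{f}$ at $x$. When $x$ has strict decreasing order, this is immediate: in a small ball around $x$ the ordering is preserved, so $\hat{f}$ is locally the linear function $y \mapsto \langle g, y \rangle + \text{const}$, and therefore $g = \nabla \hat{f}(x)$. When $x$ has ties, there is ambiguity in the sort, and any such choice of $\pi$ still yields a valid sub-gradient by the convexity of $\hat{f}$ (part (a) of \Cref{thm:lovasz_extension_properties}): the vector $g$ is the gradient of $\hat{f}$ on the chamber of the permutohedron selected by $\pi$, and at a boundary point of chambers the gradient from either side is a valid sub-gradient by a standard argument for piecewise-linear convex functions.

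The main conceptual step is the ties argument; computationally, the whole procedure is routine and the runtime is dominated by the $(|V|+1)$ evaluation oracle calls and the sort, giving the claimed $O(|V| \cdot \EO + |V|^2)$ bound.
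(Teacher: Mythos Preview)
Your proposal is correct and follows the standard argument; the paper itself does not supply a proof of this statement but simply cites it as Theorem~61 of \cite{lsw15}. Your construction---sort the coordinates, evaluate $f$ along the resulting chain $\emptyset=S_0\subset S_1\subset\cdots\subset S_{|V|}=V$, and return the vector of consecutive differences---is exactly the classical Edmonds greedy subgradient, and your runtime accounting and the piecewise-linear/ties justification are both sound.
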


\subsection{Decomposable submodular function minimization proofs}

In this subsection, we prove the following more general version of \Cref{thm:decompSFM_intro}.

\begin{thm}[Decomposable SFM] \label{thm:decompSFM_body}
Let $F: V \rightarrow [-1,1]$ be given by $F(S) = \sum_{i = 1}^n F_i(S \cap V_i)$, where each $F_i: 2^{V_i} \rightarrow \mathbb{R}$ is a submodular function on $V_i \subseteq V$ with $|V_i| = d_i$. Let $m = \sum_{i=1}^n d_i$ and $d_{\max} := \max_{i \in [n]} d_i$. Then we can find an $\epsilon$-approximate minimizer of $f$ using at most $O(d_{\max} m \log(m/\epsilon))$ evaluation oracle calls. 
\end{thm}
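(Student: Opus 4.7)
The plan is to reduce decomposable SFM to the convex program handled by our main algorithm via the Lovász extension, and then round the continuous optimum back to a subset via thresholding.

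First, for each submodular $F_i$ I denote by $\hat{F}_i:[0,1]^{V_i}\to \R$ its Lovász extension. By \cref{thm:lovasz_extension_properties}(a) each $\hat{F}_i$ is convex, with $\min_{\vx\in[0,1]^{V_i}}\hat{F}_i(\vx)=\min_{S\subseteq V_i}F_i(S)$, and by linearity of expectation $\hat{F}(\vx)\defeq\sum_i \hat{F}_i(\vx|_{V_i})$ is the Lovász extension of $F$. Consequently an $\epsilon$-approximate minimizer of $F$ follows from an $\epsilon/2$-approximate minimizer of $\hat{F}$ over $[0,1]^V$ combined with threshold rounding.

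Second, I would encode this as the convex program of \cref{thm:MainThmOfKiProblem}: introduce auxiliary variables $z_i$, set
\[
\ki \defeq \{(\vxi, z_i)\in \R^{d_i+1} : \vxi\in[0,1]^{d_i},\ \hat{F}_i(\vxi)\leq z_i\},
\]
with linking constraints $\ma\vx=\vb$ forcing the copies of shared coordinates across distinct $V_i$'s to agree, and objective $\inprod{\vc}{\vx}=\sum_i z_i$. Each $\ki$ is contained in a ball of $\poly(m)$ radius (using the box bound on $\vxi$ together with the assumed boundedness $|F|\leq 1$, hence $|F_i|\leq \poly(n)$ after a harmless modular normalization), and the feasible region contains an interior ball of radius $1/\poly(m)$ about the center of the box paired with the corresponding $z_i$'s. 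By \cref{thm:SO_from_EO}, a subgradient of $\hat{F}_i$, and thus a separation oracle for $\ki$, is implementable with $O(d_i\cdot\EO+d_i^2)$ evaluation oracle work, which is $O(d_{\max}\cdot\EO)$ per call.

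Third, I would invoke \cref{thm:MainThmOfKiProblem} on this program to obtain a point $\vtheta\in[0,1]^V$ with $\hat{F}(\vtheta)\leq\min_{S\subseteq V}F(S)+\epsilon/2$ after $O(m\log(m/\epsilon))$ separation oracle calls; multiplying by the per-call cost above gives the stated $O(d_{\max}\, m\log(m/\epsilon))$ evaluation oracle complexity. To recover a set I would threshold-round: since $\hat{F}(\vtheta)=\E_{t\sim[0,1]}[F(S_t(\vtheta))]$ with $S_t(\vtheta)=\{i:\vtheta_i\geq t\}$, some threshold $t^\star$ gives $F(S_{t^\star}(\vtheta))\leq\hat{F}(\vtheta)$, and there are at most $|V|+1\leq m+1$ distinct level sets, so enumerating them and picking the best costs an extra $O(m)\cdot\EO$. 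The main obstacle will be controlling the polynomial dependence on $R$ and $1/r$ cleanly enough that the $\log(R/(\epsilon r))$ factor in \cref{thm:MainThmOfKiProblem} contributes only $\log(m/\epsilon)$: the assumption $|F|\leq 1$ does not directly bound individual $F_i$'s, so one must either normalize (e.g.\ subtract a modular function that brings each $F_i$ into a $\poly(n)$ range without changing minimizers) or absorb a $\poly(n)$ factor inside the logarithm, and argue that either way this extra factor is harmless.
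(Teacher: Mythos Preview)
Your proposal is essentially the paper's approach: reduce to the continuous problem via the Lov\'asz extension, invoke the main algorithm, and threshold-round; the only cosmetic difference is that the paper calls \cref{thm:mainResult} directly (which already packages the epigraph construction you spell out) rather than \cref{thm:MainThmOfKiProblem}. Your caution about bounding the individual $F_i$'s is well placed---the paper simply asserts that $\hat f$ is $2$-Lipschitz from the global range bound $|F|\le 1$ and plugs this into \cref{thm:mainResult} without separately controlling each $\hat f_i$, so on this point you are in fact being more careful than the paper's own argument.
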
 

\begin{proof}
Let $\hat{f_i}$ be the Lov\'asz extension of each $f_i$, then $\hat{f} = \sum_{i=1}^n \hat{f}_i$ is the Lov\'asz extension of $f$.
Note that $\hat{f}$ is $2$-Lipschitz since the range of $f$ is $[-1,1]$. Also, the diameter of the range $[0,1]^{V_i}$ for each Lov\'asz extension $\hat{f}_i$ is at most $\sqrt{|V_i|} \leq \sqrt{d_{\max}}$. Thus using \Cref{thm:mainResult}, we can find a vector $x \in [0,1]^V$ such that $\hat{f}(x) \leq \min_{x^* \in [0,1]^V} \hat{f}(x^*) + \epsilon$ in $\poly(m \log(1/\epsilon))$ time and $O(m \log(m \sqrt{d_{\max}} /\epsilon)) = O(m \log(m /\epsilon))$ subgradients of the $\hat{f}_i$'s. By \Cref{thm:SO_from_EO}, each sub-gradient of $\hat{f}_i$ can be computed by making at most $d_i \leq d_{\max}$ queries to the evaluation oracle for $f_i$. Thus the total number of evaluation oracle calls we make in finding an $\epsilon$-additive approximate minimizer $x \in [0,1]^V$ of $\hat{f}$ is at most $O(d_{\max} m \log(m /\epsilon))$.

Next we turn the $\epsilon$-additive approximate minimizer $x$ of $\hat{f}$ into an $\epsilon$-additive approximate minimizer $S \subseteq V$ for $f$. 
Without loss of generality, assume that $x_1 \geq \cdots \geq x_{|V|}$. Then by property (c) in \Cref{thm:lovasz_extension_properties}, we have 
\[
\hat{f}(x) = \sum_{i=1}^{|V|} (f([i]) - f([i-1])) x_i = f(V) \cdot x_{|V|} +  \sum_{i=1}^{|V|-1} f([i]) \cdot (x_i - x_{i+1}) .
\]
Since $x_ i -x_{i+1} \geq 0$, the above implies that $\min_{i \in \{1,\dots,|V|\}} f([i]) \leq \hat{f}(x)$. Thus we can find a subset $S \subseteq V$ among $f([i])$ for all $i\in \{1, \cdots, |V|\}$ such that $f(S) \leq \hat{f}(x)$. Then by property (a) in \Cref{thm:lovasz_extension_properties}, the set $S$ is an $\epsilon$-additive approximate minimizer of $f$. This proves the theorem. 
\end{proof}

\end{appendix}

\end{document}